\title{Combinatorial classification of quantum lens spaces}
  \author{\textbf{Peter Lunding Jensen} \\
          {\small Department of Mathematical Sciences, University of Copenhagen, Denmark}\\ 
          \href{mailto:dvf408@alumni.ku.dk}{dvf408@alumni.ku.dk} \vspace{10pt} \\
          \textbf{Frederik Ravn Klausen} \\ 
         {\small Department of Mathematical Sciences, University of Copenhagen, Denmark}\\
          \href{mailto:tlk870@alumni.ku.dk}{tlk870@alumni.ku.dk} \vspace{10pt}  \\   
          \textbf{Peter M.\ R.\ Rasmussen}\footnote{Currently at Computer Science Department, University of California Los Angeles.}  \\
          {\small Department of Mathematical Sciences, University of Copenhagen, Denmark}\\ 
          \href{mailto:rasmussen@cs.ucla.edu}{rasmussen@cs.ucla.edu}}
\newcommand\numberthis{\addtocounter{equation}{1}\tag{\theequation}}
\newcommand{\ceil}[1] {
\left\lceil #1 \right\rceil
}
\newcommand{\N}{\mathbb{N}}
\newcommand{\Z}{\mathbb{Z}}
\newcommand{\KKK}{\mathbb K}
\newcommand{\qlsm}[3]{C(L_q(#1;({#3}'_1,\dots, {#3}'_{#2})))}
\newcommand{\qls}[3]{C(L_q(#1;({#3}_1,\dots, {#3}_{#2})))}
\newcommand{\A}[2]{\mathsf{A}_{(#1;\overline {#2})}}
\newcommand{\Am}[2]{\mathsf{A}_{(#1;\overline {#2}')}}
\newcommand{\B}[2]{\mathsf{B}_{(#1;\overline {#2}
)}}
\newcommand{\Bm}[2]{\mathsf{B}_{(#1;\overline {#2}'
)}}
\newcommand{\AAA}[3]{\mathsf{A}_{(#1;({#3}_1,\dots, {#3}_{#2}))}}
\newcommand{\AAAm}[3]{\mathsf{A}_{(#1;({#3}'_1,\dots, {#3}'_{#2}))}}
\newcommand{\BBB}[2]{\mathsf{B}_{(#1;#2)}}
\newcommand{\NN}[2]{\mathsf{N}_{(#1;\overline {#2})}}
\newcommand{\MM}[2]{\mathsf{M}_{(#1;\overline {#2})}}
\newcommand{\NNN}[2]{\mathsf{N}_{(#1;(#2))}}
\newcommand{\MMM}[2]{\mathsf{M}_{(#1;(#2))}}
\newcommand{\ind}[2]{\langle #1, #2 \rangle}
\newcommand{\abs}[1]{\left\vert #1\right\vert}
\newcounter{thmcounter}
\numberwithin{thmcounter}{section}
\newtheorem{theorem}[thmcounter]{Theorem}
\newtheorem{definition}[thmcounter]{Definition}
\newtheorem{lemma}[thmcounter]{Lemma}
\newtheorem{corollary}[thmcounter]{Corollary}
\newtheorem{notation}[thmcounter]{Notation}
\newtheorem{conjecture}[thmcounter]{Conjecture}
\begin{document}
\maketitle
\begin{abstract}
We answer the question of how large the dimension of a quantum lens space must be, compared to the primary parameter $r$, for the isomorphism class to depend on the secondary parameters. Since classification results in C*-algebra theory reduces this question to one concerning a certain kind of $SL$-equivalence of integer matrices of a special form, our approach is entirely combinatorial and based on the counting of certain paths in the graphs shown by Hong and Szyma\'nski to describe the quantum lens spaces.
\end{abstract}
\newpage

\section{Introduction}
In a seminal paper by Hong and Szyma\'nski \cite{jhhws:qlsga} an important class of \emph{quantum lens spaces} $\qls rnm)$  was given a description as $C^*$-algebras arising from certain graphs -- or their adjacency matrices -- in the vein of Cuntz and Krieger \cite{jcwk:cctmc}. These graphs can be read off directly from the data $(r;(m_1,\dots,m_n))$ determining the quantum lens space, where $r>2$ are integers and $m_i$ are units of $\Z/r\Z$. Using this characterisation, it is easy to see that $\qls{r}{n}{m}$ can only be isomorphic to  $\qls{r'}{n'}{m'}$ when $r=r'$ and $n=n'$, and this raises the important question of to what extent the choice of the units can influence the $C^*$-algebras. 

To answer such questions, one appeals naturally to the classification theory for $C^*$-algebras by $K$-theory, as indeed a large class of Cuntz-Krieger algebras were classified by Restorff in \cite{gr:cckasi}. Unfortunately, the quantum lens spaces fall outside this class, and indeed, outside any class considered at the time \cite{jhhws:qlsga} was written. 
Thus, apart from noting that the $m_i$ can obviously not influence the $C^*$-algebras when $n\leq 3$,  Hong and  Szyma\'nski left the question open.

Quantum lens spaces are still a subject of interest, however, see for instance Arici, Brain, and Landi \cite{fasbgl:qlsnyto} and Brzezi{\'n}ski and Szyma{\'n}ski \cite{tbws:qlsnyet}, and using recent classification results obtained for Cuntz-Krieger algebras with uncountably many ideals, Eilers, Restorff, Ruiz, and S\o rensen in \cite{seerapws:gccfg} managed to reduce this question to elementary matrix algebra and to prove that when $n=4$ there are precisely two different $\qls{r}{n}{m}$ when $r$ is a multiple of 3, and only one when $r$ is not.

Søren Eilers made computer experiments for other $r$ and $n$ which suggested that the quantum lens spaces are unique when $n< s$ for $s$ the smallest even number strictly larger than the smallest divisor of $r$ which is not $2$, and that at least two choices of $m_i$ give different $C^*$-algebras when $n\geq s$. It is the aim of the paper at hand to provide the combinatorial insight needed to prove that this in fact is the case, and to study the number of different $C^*$-algebras that can be obtained by varying the $m_i$.

We will not work directly on questions of isomorphism of the $C^*$-algebras, and hence, \emph{no prior knowledge on $C^*$-algebras or their classification theory is required}. Instead we study the equivalent notion of $SL$ equivalence of the graphs associated to the given data. Indeed, a result of \cite{seerapws:gccfg} states that the following are equivalent  
\begin{itemize}
\item $\qls{r}{n}{m}\otimes \KKK\simeq \qls{r}{n}{m'}\otimes \KKK$
\item There exist integer matrices $U,V$ both of the form
\[
\begin{bmatrix}
1&*&*&\dots&*\\
 &1&*&&\\
&&\ddots&\ddots&\vdots\\
&&&1&*\\
&&&&1
\end{bmatrix}
\]
so that $U(\AAA{r}{n}{m}-I)=(\AAA{r}{n}{m'}-I)V$
\end{itemize}
The exact notation and definitions will be given in Section 2 together with the rudimentary results needed for our classification. Section 3 handles the most general case, basically establishing the influence of the odd prime divisors of the parameter $r$ on the number of $C^*$-algebras emerging by varying the $m_i$. A lower bound on the number of such $C^*$-algebras is found and for $4\nmid r$ the exact $s$ such that the $C^*$-algebra is unique for $n<s$ is determined. The special case of finding $s$ when $4\mid r$ is then dealt with in Section 4.

The main result of the paper is Theorem \ref{theorem_5_1} which combines the results of Section 3 and 4 to find for every $r>2$ the $s$ such that the $C^*$-algebra is unique for every $n<s$. The other major achievement is Theorem \ref{classesInequality} which bounds the number of different quantum lens spaces arising for some $r>2$ and $n\in \N$. Based on computer experiments, we conjecture that this bound is in fact an equality when $4\nmid r$ (Conjecture \ref{classes_conjecture}).

\section{Preliminaries}
Initially, we dedicate a section to setting the stage. We establish notation, definitions, and find initial results that will assist in showing the later sections' classification results.

\subsection{Number theoretical notation}
\begin{definition}
We let $Z_n$ denote the multiplicative group of integers modulo $n$. That is $Z_n = (\Z/n\Z)^*$.
\end{definition}
\begin{notation}
We write $p^k\mid\mid n$ if $p^k\mid n$ and $p^{k+1}\nmid n$, i.e. $k$\ is the greatest power of $p$ dividing $n$.
\end{notation}

\begin{notation}
To ease notation we write the reduction of an integer $a$ calculated modulo $r$ as $ [a]_r $, i.e. we always have $0 \leq [a]_r \leq r-1.  $
\end{notation}

\subsection{The graph}
This section will introduce a definition of the graph $\MMM{r}{m_1, \dots, m_n}$, arising from the quantum lens space $C(L_q(r;(m_1, \dots, m_n)))$ as defined in \cite{jhhws:qlsga}. Further, we introduce another graph $\NNN{r}{m_1, \dots, m_n}$, which is easier to work with in the combinatorial setting, but has similar properties in a sense that will be made clear. 
\begin{definition} \label{def1}
Let $r>2$ and $\overline m = (m_1, \dots, m_n) \in (Z_r)^n$ for some $n\in \N$. Then we define a directed graph $\MM{r}{m}$ in the following way:
\begin{itemize}
\item For every pair $s, t$ with $1\leq s \leq n$ and $0\leq t<r$ there is a vertex $g_{s, t}$.
\item There is a directed edge from $g_{s_1, t_1}$ to $g_{s_2, t_2}$ if and only if $s_1\leq s_2$ and $t_2=[t_1+m_{s_1}]_r $.
\end{itemize}
For every $s\in \N$ we will call the subgraph consisting of the vertices $\{g_{s, x}\mid 0\leq x<r\}$ the $s$th subgraph of $\MM{r}{m}$, and we will call a vertex of the form $g_{s, c}$ a $c$-vertex.
\end{definition}
An example of the graph $\MMM{5}{1, 2, 1}$ is sketched in Figure \ref{graf1}.%\todo{Få graf til at passe med den "nye gamle" definition. Check at alle pile undtagen fra 1 og 4 er der. Jeg skal lige have denne definition igen. Det mangler stadig Peter J.}
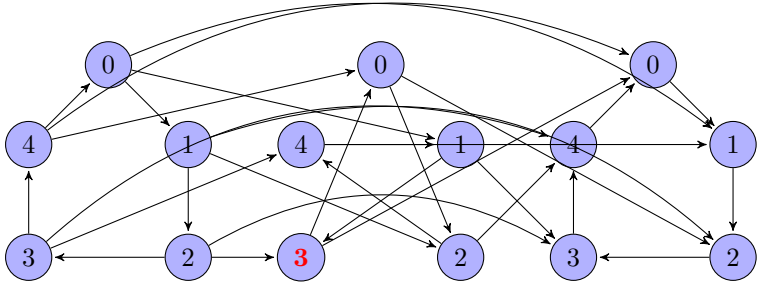
\begin{figure}[!h]
\begin{center}
\begin{tikzpicture}[->,>=stealth',shorten >= 1pt, auto,node distance=1.5 cm, main node/.style={circle,fill=blue!30, draw}]

  \node[main node] (0)                      {0};
  \node[main node] (1) [below right of=0]   {1};
  \node[main node] (2) [below of=1]         {2};
  \node[main node] (4) [below left of=0]    {4};  
  \node[main node] (3) [below of=4]         {3};
  
  \node[main node] (5) [right of=1]         {4};
  \node[main node] (9) [above right of=5]   {0};
  \node[main node](6) [below of=5]         {\color{red} \textbf{3}};
  \node[main node] (7) [below right of=9]   {1};
  \node[main node] (8) [below of=7]         {2};

  \node[main node] (10) [right of=7]         {4};
  \node[main node] (11) [above right of=10]   {0};
  \node[main node] (12) [below of=10]         {3};
  \node[main node] (13) [below right of=11]   {1};
  \node[main node] (14) [below of=13]         {2};

\path
(0) edge node[left] {} (1)
(1) edge node[left] {} (2)
(2) edge node[left] {} (3)
(3) edge node[left] {} (4)
(4) edge node[left] {} (0)

(5) edge node[left] {} (7)
(6) edge node[left] {} (9)
(7) edge node[left] {} (6)
(8) edge node[left] {} (5)
(9) edge node[left] {} (8)

(10) edge node[left] {} (11)
(11) edge node[left] {} (13)
(12) edge node[left] {} (10)
(13) edge node[left] {} (14)
(14) edge node[left] {} (12)

(0) edge [bend left] node[left] {} (13)
(0) edge [left] node[left] {} (7)
(1) edge [left] node[left] {} (8)
(1) edge [bend left] node[left] {} (14)
(2) edge [left] node[left] {} (6)
(2) edge [bend left] node[left] {} (12)
(3) edge [left] node[left] {} (5)
(3) edge [bend left] node[left] {} (10)
(4) edge [left] node[left] {} (9)
(4) edge [bend left] node[left] {} (11)

(6) edge [left] node[left] {} (11)
(5) edge [left] node[left] {} (13)
(7) edge [left] node[left] {} (12)
(9) edge [left] node[left] {} (14)
(8) edge [left] node[left] {} (10)
;
    
\end{tikzpicture}
\end{center}
\caption{Example of $\MM{r}{m}$ with $n=3$, $r=5$ and $m=(1,2,1)$.
The red 3 denotes the vertex $g_{2,3}$.}\label{graf1}
\end{figure}

\begin{definition}\label{def2}
Let $r>2$ and $\overline m = (m_1, \dots, m_n) \in (Z_r)^n$ for some $n\in \N$. Then we define a directed graph $\NN{r}{m}$ in the following way:
\begin{itemize}
\item For every pair $s, t$ with $1\leq s \leq n$ and $0\leq t<r$ there is a vertex $c_{s, t}$.
\item There is a directed edge from $c_{s_1, t_1}$ to $c_{s_2, t_2}$ in the following two cases 
\begin{itemize}
\item $s_1+1= s_2$ and $t_2=t_1$
\item $s_1=s_2$ and $t_2 = [t_1+ m_{s_1}]_r$.
\end{itemize}
\end{itemize}
For every $s$ we will call the subgraph consisting of the vertices $\{c_{s, x}\mid 0\leq x<r\}$ the $s$th subgraph of $\NN{r}{m}$, and we will call a vertex of the form $c_{s, t}$ a $t$-vertex.
\end{definition}
\noindent
Here is the graph we would rather look at. Instead of having edges from a subgraph to all the subgraphs after it, it only has edges to the one just after it. This edge will always go from $c_{s, t}$ to $c_{s+1, t}$. We show an example of the graph on Figure \ref{graf2}.
\begin{figure}[!h]
\begin{center}
\begin{tikzpicture}[->,>=stealth',shorten >= 1pt, auto,node distance=1.5 cm, main node/.style={circle,fill=blue!30, draw}]

  \node[main node] (0)                      {0};
  \node[main node] (1) [below right of=0]   {1};
  \node[main node] (2) [below of=1]         {2};
  \node[main node] (4) [below left of=0]    {4};  
  \node[main node] (3) [below of=4]         {3};
  
  \node[main node] (5) [right of=1]         {4};
  \node[main node] (9) [above right of=5]   {0};
  \node[main node] (6) [below of=5]         {3};
  \node[main node] (7) [below right of=9]   {1};
  \node[main node] (8) [below of=7]         {2};

  \node[main node] (10) [right of=7]         {4};
  \node[main node] (11) [above right of=10]   {0};
  \node[main node] (12) [below of=10]         {3};
  \node[main node] (13) [below right of=11]   {1};
  \node[main node] (14) [below of=13]         {2};

\path
(0) edge node[left] {} (1)
(1) edge node[left] {} (2)
(2) edge node[left] {} (3)
(3) edge node[left] {} (4)
(4) edge node[left] {} (0)

(5) edge node[left] {} (7)
(6) edge node[left] {} (9)
(7) edge node[left] {} (6)
(8) edge node[left] {} (5)
(9) edge node[left] {} (8)

(10) edge node[left] {} (11)
(11) edge node[left] {} (13)
(12) edge node[left] {} (10)
(13) edge node[left] {} (14)
(14) edge node[left] {} (12)

(0) edge [bend left] node[left] {} (9)
(1) edge [bend left] node[left] {} (7)
(4) edge [bend left] node[left] {} (5)
(2) edge [bend left] node[left] {} (8)
(3) edge [bend left] node[left] {} (6)

(6) edge [bend left] node[left] {} (12)
(7) edge [bend left] node[left] {} (13)
(5) edge [bend left] node[left] {} (10)
(9) edge [bend left] node[left] {} (11)
(8) edge [bend left] node[left] {} (14)

;
    
\end{tikzpicture}
\end{center}

\caption{Example of $\NN{r}{m}$ where $n=3$, $r=5$, and $m=(1,2,1)$. }\label{graf2}
\end{figure}
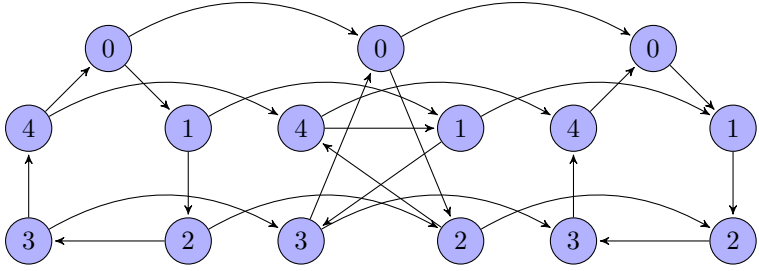

\begin{definition}\label{A}
Let $r>2$ and $\overline m = (m_1, \dots, m_n) \in (Z_r)^n$ for some $n\in \N$. Then we let $\A{r}{m}$ be the matrix satisfying that $\A{r}{m}\ind{i}{j}$ is the number of directed paths in $\MM{r}{m}$ from the $0$-vertex of the $i$th subgraph to the $0$-vertex of the $j$th subgraph that does not pass through the 0-vertex of any other subgraph. We call a path that satisfies these criteria \emph{legal}.
\end{definition}

\begin{definition}\label{B}
Let $r>2$ and $\overline m = (m_1, \dots, m_n) \in (Z_r)^n$ for some $n\in \N$. Then we let $\B{r}{m}$ be the matrix satisfying that $\B{r}{m}\langle i, j\rangle$ is the number of directed paths on $\NN{r}{m}$ from the $0$-vertex of the $i$th subgraph to the $0$-vertex of the $j$th subgraph which do not exclusively visit 0-vertices and which do not visit the 0-vertex of any other subgraph except if all the following vertices of the path are 0-vertices. We will call a path that satisfies these criteria \emph{legal}.
\end{definition}

We introduce this new graph definition $\NN{r}{m}$ because it is easier to work with than $\MM{r}{m}$. Note we always calculate indices in subgraphs modulo $r$.

\begin{lemma}\label{equalMatrices}
Let $r>2$ and $\overline m\in (Z_r)^n$ be given. Then $\A{r}{m} = \B{r}{m}$.
\end{lemma}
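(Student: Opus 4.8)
The plan is to prove the two matrices agree entrywise by exhibiting, for each pair $(i,j)$, an explicit bijection between the legal paths in $\MM{r}{m}$ from $g_{i,0}$ to $g_{j,0}$ and the legal paths in $\NN{r}{m}$ from $c_{i,0}$ to $c_{j,0}$; this immediately gives $\A{r}{m}\ind{i}{j}=\B{r}{m}\ind{i}{j}$ for all $i,j$ and hence $\A{r}{m}=\B{r}{m}$. Since the only difference between the two graphs is how one transitions between subgraphs --- in $\MM{r}{m}$ a single edge may jump from subgraph $s_1$ to any later subgraph $s_2$ while incrementing the second coordinate by $m_{s_1}$, whereas in $\NN{r}{m}$ one increments within a subgraph and only ever steps to the immediately following subgraph with the coordinate unchanged --- the natural map $\Phi$ ``expands'' each edge of an $\MM{r}{m}$-path into the corresponding segment of an $\NN{r}{m}$-path. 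Concretely, an intra-subgraph edge $g_{s,t_1}\to g_{s,t_2}$ is sent to the intra-subgraph edge $c_{s,t_1}\to c_{s,t_2}$, and a jump edge $g_{s_1,t_1}\to g_{s_2,t_2}$ with $s_2>s_1$ is sent to one intra-subgraph step $c_{s_1,t_1}\to c_{s_1,t_2}$ followed by the $s_2-s_1$ coordinate-preserving steps $c_{s_1,t_2}\to c_{s_1+1,t_2}\to\dots\to c_{s_2,t_2}$. In both cases $t_2=[t_1+m_{s_1}]_r$, so $\Phi$ produces a genuine walk in $\NN{r}{m}$ with the same endpoints.

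First I would make the bookkeeping uniform by encoding each path by its step data. An $\MM{r}{m}$-path is determined by the increasing list of subgraphs $i=u_0<\dots<u_p=j$ at which it spends time, together with the numbers $e_q\ge 1$ (for $q<p$) and $e_p\ge 0$ of edges emitted from each such subgraph; an $\NN{r}{m}$-path is determined by the tuple $(a_i,\dots,a_j)$ recording the number of intra-subgraph steps taken in each subgraph. Under $\Phi$ these data are related by $a_{u_q}=e_q$ and $a_k=0$ for every skipped subgraph $k\notin\{u_0,\dots,u_p\}$, and in both cases the requirement that the path returns to a $0$-vertex is the single congruence $\sum_k a_k m_k\equiv 0 \pmod r$. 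This makes $\Phi$ visibly injective, and its inverse is described by reading off the visited subgraphs as $\{k<j : a_k\ge 1\}\cup\{j\}$ and contracting each maximal run ``intra-steps, then coordinate-preserving steps'' back into a single jump.

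The main work --- and the step I expect to be the real obstacle --- is checking that $\Phi$ and its inverse respect the two legality conditions, which are phrased quite differently on the two graphs. The point to establish is that in a legal $\MM{r}{m}$-path a $0$-vertex of a subgraph other than the $i$th can be hit only as the terminal vertex $g_{j,0}$: landing on $g_{u,0}$ with $i<u<j$ is forbidden outright, and a jump over skipped subgraphs visits no vertices of those subgraphs in $\MM{r}{m}$ at all. Translating through $\Phi$, the only way the image can meet a $0$-vertex of an intermediate subgraph is along a terminal block of coordinate-preserving steps $c_{u,0}\to\dots\to c_{j,0}$, which is exactly the exception ``all the following vertices are $0$-vertices'' permitted by the definition of $\B{r}{m}$. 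Conversely, the prohibition on $\NN{r}{m}$-paths that visit only $0$-vertices corresponds to the fact that every $\MM{r}{m}$-path emits at least one edge from its starting subgraph, forcing $a_i\ge 1$, since $m_i$ is a unit and the coordinate immediately leaves $0$. I would verify these implications in both directions, paying particular attention to whether a full cycle within the $i$th or $j$th subgraph is treated consistently by both definitions; once the legality conditions are matched, $\Phi$ restricts to a bijection between legal paths and the desired equality follows.
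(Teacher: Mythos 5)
Your proposal is correct and takes essentially the same route as the paper's proof: you expand each edge $g_{s_1,t_1}\to g_{s_2,t_2}$ of $\MM{r}{m}$ into the path $c_{s_1,t_1}\to c_{s_1,t_2}\to c_{s_1+1,t_2}\to\dots\to c_{s_2,t_2}$ of $\NN{r}{m}$, concatenate over the edges of a legal path, and note that the final edge into $g_{j,0}$ corresponds precisely to the terminal run of $0$-vertices allowed by the exception clause in the definition of $\B{r}{m}$, while the pure $0$-vertex path is excluded on both sides. Your tuple encoding $(a_i,\dots,a_j)$ is additional bookkeeping on top of the same underlying bijection, so there is nothing substantively different from the paper's argument.
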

\begin{proof} 
There is a bijection between the edges of $\MM{r}{m}$ and paths of $\NN{r}{m}$ as follows. The edge $g_{s_1, t_1}\rightarrow g_{s_2, t_1+m_{s_1}}$ of $\MM{r}{m}$ corresponds to the path 
\begin{equation*}
	c_{s_1, t_1}\rightarrow c_{s_1, t_1 + m_{s_1}} \rightarrow c_{s_1+1, t_1 + m_{s_1}} \rightarrow \dots \rightarrow c_{s_2, t_1 + m_{s_1}}
\end{equation*}
on $\NN{r}{m}$.That this is a bijection follows immediately from the fact that the edge and path are both uniquely determined by $s_1$, $s_2$, and $t_1$. 

Now, we need to establish a bijection between the legal paths on $\MM{r}{m}$ and the legal paths on $\NN{r}{m}$. This happens naturally by translating any edge in a legal path on $\MM rm$ into a subpath of the form above of a legal path on $\NN mr$. That this map has an inverse follows easily since any legal path in $\NN{r}{m}$ consists of subpaths of the above form where a new subpath starts whenever we stay in the same subgraph. Further, we have that the constraint of Definition \ref{A} translates into the constraint of Definition \ref{B} an edge from the $t$th subgraph to the 0-vertex of the $n$th subgraph in Definition \ref{A} corresponds to going to the $0$-vertex in the $t$th subgraph and then visiting 0-vertices exclusively until reaching the 0-vertex of the $n$th subgraph in Definition \ref{B}.
\end{proof}

\subsection{Equivalence classes}
The overall aim of the article is to classify the quantum lens spaces, which is a problem that Theorem 7.1 of Section 7.2 of \cite{seerapws:gccfg} reduces to a question of $SL$ equivalence, hence elementary matrix algebra. 
\begin{theorem}[Eilers, Restorff, Ruiz, and Sørensen] 
Let $r>2$ and $\overline m, \overline m'\in (Z_r)^n$ be given. The following are equivalent:
	\begin{itemize}
\item $\qls{r}{n}{m}\otimes \KKK\simeq \qlsm{r}{n}{m}\otimes \KKK$.
\item There exist matrices $U,V$ both of the form
\[
\begin{bmatrix}
1&*&*&\dots&*\\
 &1&*&&\\
&&\ddots&\ddots&\vdots\\
&&&1&*\\
&&&&1
\end{bmatrix}
\]
so that $U(\AAA{r}{n}{m}-I)=(\AAAm{r}{n}{m}-I)V$.
\end{itemize}
\end{theorem}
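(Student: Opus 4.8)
The plan is to read this statement as a \emph{translation} result: the first bullet is a question about $C^*$-algebras and the second is pure integer matrix algebra, and the bridge between them is the general classification theory for graph $C^*$-algebras. I would therefore begin by recalling Hong and Szymański's realization of each $\qls{r}{n}{m}$ (after stabilizing by $\KKK$) as the graph $C^*$-algebra of the graph $\MM{r}{m}$. The decisive structural feature, read off directly from Definition~\ref{def1}, is that the $n$ subgraphs are \emph{linearly ordered}: an edge from the $s_1$-th to the $s_2$-th subgraph exists only when $s_1\leq s_2$. Consequently the lattice of gauge-invariant ideals is a finite chain of length $n$, the simple subquotients are exactly the $C^*$-algebras attached to the individual subgraphs, and the whole algebra lies in the class of unital graph $C^*$-algebras with linearly ordered primitive ideal space to which the classification machinery applies.

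Next I would verify that these algebras satisfy the technical hypotheses of the relevant classification theorem (condition (K) on the graph, so that each simple subquotient is purely infinite, real rank zero, and satisfies the UCT). Granting this, the theorem asserts that two such algebras are stably isomorphic if and only if their filtered (ideal-related) $K$-theory is isomorphic. The third step is then to compute this invariant from the defining matrix: for a graph $C^*$-algebra the $K$-theory is obtained from $\A{r}{m}-I$, with $K_0 \cong \operatorname{coker}(\A{r}{m}-I)$ and $K_1 \cong \ker(\A{r}{m}-I)$ up to the usual transpose and sign conventions, while the chain of ideals equips these groups with a filtration given by the block decomposition indexed by $1,\dots,n$.

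The heart of the argument is to show that an isomorphism of filtered $K$-theory is realized precisely by a pair of integer matrices $U,V$ of the stated shape with $U(\A{r}{m}-I)=(\Am{r}{m}-I)V$. Here the block-upper-triangular shape is forced by compatibility with the chain of ideals (an isomorphism must carry each ideal in the filtration into the corresponding one), and the $1$'s on the diagonal encode that the induced map on each simple subquotient's ordered $K_0$ — which, being purely infinite simple, carries the trivial order — is the canonical one, after normalizing the finitely many diagonal units to $1$ over $\Z$.

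I expect the main obstacle to be exactly this matching of the abstract invariant to the rigid matrix form. Three points need genuine care: establishing the equivalence in \emph{both} directions rather than a single implication; arranging that $U$ and $V$ can be taken \emph{simultaneously} unipotent and upper triangular, rather than merely lying in $GL$ over $\Z$ with the right induced maps; and tracking the order unit together with the $K_1$-groups consistently along the chain so that the normalization of the diagonal to $1$ is legitimate. It is precisely the linear (rather than general) ideal structure of $\MM{r}{m}$ that makes the upper-triangular reduction possible, and the bookkeeping along this chain is where the real work lies.
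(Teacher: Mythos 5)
You should first be aware that the paper does not prove this statement at all: it is quoted, with attribution, as Theorem 7.1 of \cite{seerapws:gccfg}, and everything in the paper downstream of it treats the reduction as a black box. So your proposal can only be measured against the proof in \cite{seerapws:gccfg}, and there it fails at your second step. You claim the graphs $\MM{r}{m}$ satisfy condition (K), so that each subquotient is purely infinite, simple, of real rank zero, and the filtered-$K$-theory classification for such algebras applies. This is false, and the failure is precisely the point of the whole story. Since each $m_s$ is a unit of $\Z/r\Z$, the $s$th subgraph of $\MM{r}{m}$ is a single cycle of length $r$, and since no edge returns to an earlier subgraph, every vertex of $\MM{r}{m}$ lies on \emph{exactly one} cycle; condition (K) therefore fails at every vertex. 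You are right that the gauge-invariant ideals form a chain of length $n$, but the subquotients of that chain are Morita equivalent to $C(\mathbb{T})$ --- circle algebras, neither simple nor purely infinite nor of real rank zero --- so the primitive ideal space is uncountable and in particular not linearly ordered. This is exactly why Hong and Szyma\'nski could not settle the question in \cite{jhhws:qlsga}, why Restorff's Cuntz--Krieger classification does not apply, and why the introduction of the present paper stresses that \cite{seerapws:gccfg} required new classification results for graph $C^*$-algebras with \emph{uncountably many} ideals. The actual proof of the quoted theorem goes through the geometric/strong classification of unital graph $C^*$-algebras developed in \cite{seerapws:gccfg}, which covers the mixed situation of cycle-type and purely infinite subquotients; the Kirchberg--Phillips-type machinery you invoke simply does not have these algebras in its hypotheses.

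Beyond that, even granting the correct classification theorem, your sketch concedes rather than supplies the decisive step: producing, from an isomorphism of the appropriate invariant, a pair of \emph{simultaneously} unipotent upper-triangular integer matrices $U$, $V$ with $U(\A{r}{m}-I)=(\Am{r}{m}-I)V$, and conversely realizing any such pair by an isomorphism. You correctly identify this as the heart of the argument and as the main obstacle, but the proposal stops there; that bookkeeping is the actual content of Theorem 7.1 of \cite{seerapws:gccfg}, so what you have is a (partly misdirected) reading plan for the external reference, not a proof.
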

\noindent
Thus, determining whether or not two quantum lens spaces, $C(L_q(r;(m_1',\dots, m_n')))$ and $\qls{r}{n}{m}$, are isomorphic comes down to whether or not the matrices $\A{r}{m}$ and $\Am{r}{m}$ (or $\B{r}{m}$ and $\Bm{r}{m}$ by Lemma \ref{equalMatrices}) are equivalent with respect to the equivalence relation, $\sim$, defined below.

\begin{definition}
We will say that two matrices $C$ and $D$ are upper triangular equivalent, written $C\cong D$ if  there exist upper triangular matrices, $X, Y$, with 1 in every entry of the diagonal such that $XC=DY$.
\end{definition}
Equivalently, the matrices $C$ and $D$ are upper triangular equivalent, if there is a series of pivots transforming $C$ into $D$ with the restrictions that
\begin{enumerate}
    \item a multiple of row $k$ can only be added to row $l$ if $k>l$
    \item a multiple of column $k$ can only be added to column $l$ if $k<l$.
\end{enumerate} 
\noindent
Note that this is clearly an equivalence relation since such upper triangular matrices are invertible.
\begin{definition}\label{maindefinition}
We say that two matrices, $A, B$ are $\sim$-equivalent, if $$A-I\cong B-I.$$
\end{definition}
\noindent
In particular, we are interested in efficiently deciding the number of equivalence classes given $n$ and $r>2$ and deciding whether or not two graphs belong to the same equivalence class.
\begin{definition}
Let $r>2$ and $n\in \N$ be given. Then we define
\begin{equation*}
  S_{r, n} = \left\{ \A{r}{m} \mid \overline m \in (Z_r)^n \right\} = \left\{ \B{r}{m} \mid \overline m \in (Z_r)^n \right\}
\end{equation*}
as the set of all matrices produced by vectors of length $n$ with parameter $r$.
\end{definition}

\begin{definition}
Let $r>2$ and $n\in \N$ be given. Then $\varphi_r(n)$ denotes the number of elements of $S_{r, n} / \sim$ and $\widetilde \varphi(r)$ denotes the least $n$ such that $\varphi_r(n)>1$.
\end{definition}
\noindent
Thus, our goal in this paper is to find a bound for $\varphi_r$ given $r$ and to express $\widetilde\varphi$ in closed form.

\subsection{Invariants}
In this section we establish some invariants and properties in relation to changes to the vector $\overline m$ in $\NN{r}{m}$.
\begin{lemma}\label{invariantFirstLast}
The matrix $\B{r}{m}$ does not depend on the choice of $m_1$ and $m_n$.
\end{lemma}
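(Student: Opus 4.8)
The plan is to work entirely inside the graph $\NN{r}{m}$ and the path-counting description of $\B{r}{m}$ from Definition \ref{B}, exploiting that $m_1$ and $m_n$ control the intra-subgraph edges of only the first and the last subgraph respectively. First I would establish a normal form for legal paths. A legal path from $c_{i,0}$ to $c_{j,0}$ cannot begin with the forward edge $c_{i,0}\to c_{i+1,0}$, since this lands on a $0$-vertex, after which the constraint of Definition \ref{B} forces every remaining vertex to be a $0$-vertex, contradicting that the path does not visit $0$-vertices exclusively. Hence it must first take an intra-subgraph edge. Reading off the constraints, each legal path then decomposes uniquely as a nonempty walk through nonzero vertices (mixing intra- and forward edges), followed, from the first $0$-vertex reached after leaving $c_{i,0}$, by a terminal run of forward edges $c_{k,0}\to c_{k+1,0}\to\dots\to c_{j,0}$ (here a $0$-vertex is left only by a forward edge, since an intra-edge out of a $0$-vertex lands on a nonzero vertex).

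To isolate $m_1$, observe it enters only through the intra-edges of the first subgraph, so it can affect only the entries $\B{r}{m}\ind{1}{j}$. The part of such a path lying in the first subgraph is an initial arc of the directed $r$-cycle $c_{1,0}\to c_{1,m_1}\to\dots\to c_{1,(r-1)m_1}\to c_{1,0}$ formed by those intra-edges, after which the path either leaves by a forward edge into the second subgraph or (when $j=1$) simply closes the loop. Because $m_1\in Z_r$ is a unit, sending the number of intra-steps $\ell\in\{1,\dots,r\}$ to the exit vertex $c_{2,[\ell m_1]_r}$ is a bijection onto the set of all exit vertices, so for each exit vertex there is exactly one admissible arc, and its existence is independent of $m_1$. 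I would then define a bijection between the legal paths for $\overline m$ and for a vector $\overline{m}'$ agreeing with $\overline m$ except in the first coordinate: replace the first-subgraph arc by the unique $m_1'$-arc reaching the same exit vertex, and keep the remainder of the path, which uses only $m_2,\dots,m_n$. This is legality-preserving and bijective, giving independence of $m_1$.

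The argument for $m_n$ is the mirror image and touches only the entries $\B{r}{m}\ind{i}{n}$. A legal path ending at $c_{n,0}$ enters the last subgraph at some vertex $c_{n,t}$: if $t=0$ the terminal $0$-run has already begun and the last subgraph contributes no intra-edge, while if $t\neq 0$ the path must close up along the unique intra-arc $c_{n,t}\to c_{n,t+m_n}\to\dots\to c_{n,0}$, since the last subgraph has no forward edges. In either case the last-subgraph contribution is determined by the entry vertex alone and exists for every value of $m_n$, so the same replacement of the final arc yields a bijection and hence independence of $m_n$.

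The main difficulty I anticipate is verificational rather than conceptual: checking that the arc-replacements preserve legality, most importantly that a swapped intra-arc meets no $0$-vertex before its final vertex, which holds because the intra-edges of a subgraph form a single $r$-cycle and so return to $0$ only after a full loop; and treating the corner entry $\B{r}{m}\ind{1}{n}$, where both $m_1$ and $m_n$ occur. The latter is handled by varying the two coordinates one at a time, so that the first step swaps only the first-subgraph arc and the second step swaps only the last-subgraph arc, while the middle of the path, governed by $m_2,\dots,m_{n-1}$, stays fixed throughout; the two independences then compose.
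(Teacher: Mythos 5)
Your proof is correct and is essentially the paper's own argument: the paper likewise notes that, independently of $m_1$, there is exactly one admissible way to reach each vertex of the second subgraph from $c_{1,0}$ (your exit-vertex bijection), and that, independently of $m_n$, there is exactly one way to reach $c_{n,0}$ from whichever vertex of the last subgraph the path enters at (your entry-vertex argument). Your version simply spells out the path-replacement bijections, the legality checks, and the treatment of the corner entry $\B{r}{m}\ind{1}{n}$ that the paper leaves implicit.
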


\begin{proof}
If $n=1$ this is obvious, so assume $n>1$. Consider legal paths in $\NNN{r}{m_1, \dots, m_n}$ from the 0-vertex of the first subgraph of to the 0-vertex of the $j$th subgraph for $j>1$. No matter what $m_1$ is there is exactly one way to reach any of the vertices of the second subgraph from the 0-vertex of the first subgraph. Thus, the number of such directed paths is independent of $m_1$ and the first part follows.

Now, consider the last subgraph. Once it is reached, there is exactly one way to reach the 0-vertex, once it is reached, so this does not depend on $m_n$.
\end{proof}

\begin{lemma}\label{multiply}
Let $r>2$, $\overline m \in (Z_r)^n$, and $b\in Z_r$. Then $\B{r}{m} = \BBB{r}{b\cdot \overline m}$.
\end{lemma}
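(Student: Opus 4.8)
The plan is to exhibit an explicit isomorphism of the directed graphs $\NN{r}{m}$ and $\mathsf{N}_{(r;b\cdot\overline m)}$ that fixes the $0$-vertices, and then transport legal paths across it. Since $b\in Z_r$ is a unit, multiplication by $b$ is a bijection of $\{0,1,\dots,r-1\}$ modulo $r$ that fixes $0$, so I would define a map $\psi$ on vertices by $\psi(c_{s,t})=c_{s,[bt]_r}$. This permutes the $t$-labels within each subgraph while leaving the subgraph index $s$ untouched, it is a bijection on vertices, and in particular $\psi(c_{s,0})=c_{s,0}$ for every $s$.

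Next I would verify that $\psi$ carries the two edge types of Definition \ref{def2} to the corresponding edges of $\mathsf{N}_{(r;b\cdot\overline m)}$. The subgraph-to-subgraph edges $c_{s,t}\to c_{s+1,t}$ do not involve $\overline m$ at all, and $\psi$ sends such an edge to $c_{s,[bt]_r}\to c_{s+1,[bt]_r}$, again an edge of the same type. For a within-subgraph edge $c_{s,t}\to c_{s,[t+m_s]_r}$, applying $\psi$ gives $c_{s,[bt]_r}\to c_{s,[b(t+m_s)]_r}$, and since $[b(t+m_s)]_r=[bt+bm_s]_r$ this is precisely the within-subgraph edge of $\mathsf{N}_{(r;b\cdot\overline m)}$ emanating from the $[bt]_r$-vertex, whose step size is $bm_s$. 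As $\psi$ is invertible (its inverse is multiplication by $b^{-1}$ in $Z_r$), it is a graph isomorphism.

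Finally, because $\psi$ fixes every $0$-vertex and sends non-$0$-vertices to non-$0$-vertices, it matches $0$-vertices with $0$-vertices bijectively and therefore preserves verbatim the conditions defining a legal path in Definition \ref{B} (not visiting $0$-vertices exclusively, and only visiting an intermediate $0$-vertex when all subsequent vertices of the path are $0$-vertices). Hence $\psi$ restricts to a bijection between legal paths from the $0$-vertex of the $i$th subgraph to the $0$-vertex of the $j$th subgraph in the two graphs, yielding $\B{r}{m}\ind{i}{j}=\BBB{r}{b\cdot\overline m}\ind{i}{j}$ for all $i,j$, which is exactly the claim. The only genuinely computational ingredient is the distributivity identity $[b(t+m_s)]_r=[bt+bm_s]_r$ used above, so I anticipate no real obstacle beyond organizing the edge-type bookkeeping and checking that source and target $0$-vertices are respected.
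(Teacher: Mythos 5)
Your proof is correct and takes essentially the same approach as the paper's: both rely on the relabelling of $t$-coordinates by multiplication by the unit $b$, which fixes all $0$-vertices and therefore carries legal paths of $\NN{r}{m}$ bijectively onto legal paths of $\NNN{r}{b\cdot \overline m}$. The only difference is organizational --- you first verify that the relabelling is a genuine graph isomorphism and then transport legality across it, whereas the paper defines the map directly on legal paths and concludes bijectivity from injectivity in both directions; your packaging is, if anything, slightly tidier.
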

\begin{proof}
We will show that there is a bijection between the legal paths of $\B{r}{m}$ and $\BBB{r}{b\cdot \overline m}$ as follows. Let $\gamma$ be a legal path
\begin{align*}
    c_{s_1, 0}=c_{s_1, t_1}\to c_{s_2, t_2}\to \dots \to c_{s_q, t_q}=c_{s_q, 0}
\end{align*}
on $\NN{r}{m}$. Our bijection sends the legal $\gamma$ to the path $\omega$ on $\NNN r{b\cdot \overline m}$  given by 
\begin{align*}
    c_{s_1, 0}=c_{s_1, [b\cdot t_1]_r}\to c_{s_2, [b\cdot t_2]_r}\to \dots \to c_{s_q, [b\cdot t_q]_r}=c_{s_q, 0}.
\end{align*}
That the map is injective follows since multiplication by $b\in Z_r$ is an injection $Z_r\to Z_r$. Further, it is easy to see that all legal paths on $\NN{r}{m}$ will be mapped to legal paths on $\NNN r{b\cdot \overline m}$ since multiplication by $b$ does not change the positions of the 0-vertices in a path. Thus, there is an injection from the legal paths on $\NN{r}{m}$ to the legal paths on $\NNN r{b\cdot \overline m}$ and by the same argument there must be an injection from the legal paths on $\NNN r{b\cdot \overline m}$ to the legal paths on $\NN{r}{m}$. It follows that said map is a bijection and we are done. 
\end{proof}

\begin{corollary}\label{change3}
Let $\overline m  = (m_1,m_2, \dots, m_{n-1},m_n) \in  (Z_r)^n$. Then there exists an $\overline m' \in (Z_r)^n$ with 1 in the first, last and $k$-th index, i.e.\
 $$\overline m' = (1,m_2', \dots m_{k-1}', 1, m_{k+1}' \dots, m_{n-1}',1),$$ such that $\B{r}{m} = \Bm{r}{m}$. 
\end{corollary}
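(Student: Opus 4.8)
The plan is to combine the two invariance results just established. Lemma \ref{multiply} allows us to rescale the \emph{entire} vector $\overline m$ by any unit $b \in Z_r$ without changing the associated matrix, while Lemma \ref{invariantFirstLast} tells us that the matrix is insensitive to the first and last entries of the vector. The idea is to use the first lemma to force the $k$-th coordinate to equal $1$, and then use the second to clean up the two endpoints. The only arithmetic input needed is that $m_k$ is invertible in $\Z/r\Z$, which holds by hypothesis since $m_k \in Z_r = (\Z/r\Z)^*$.

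Concretely, first I would set $b = m_k^{-1} \in Z_r$ and apply Lemma \ref{multiply} to replace $\overline m$ by $b \cdot \overline m$. The $k$-th coordinate becomes $[b \cdot m_k]_r = 1$, and Lemma \ref{multiply} guarantees $\B{r}{m} = \BBB{r}{b \cdot \overline m}$, so the matrix is unchanged. Next, the rescaled vector $b \cdot \overline m$ has a $1$ in position $k$ but possibly arbitrary values in positions $1$ and $n$. By Lemma \ref{invariantFirstLast} the matrix does not depend on these two coordinates, so I may overwrite both of them with $1$ to obtain a vector $\overline m'$ of the stated form $(1, m_2', \dots, m_{k-1}', 1, m_{k+1}', \dots, m_{n-1}', 1)$, again without altering the matrix. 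Chaining the two equalities of matrices then gives $\B{r}{m} = \Bm{r}{m}$.

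There is essentially no serious obstacle here: the corollary follows immediately by composing the two preceding lemmas. The only point worth a moment's care is the interaction between the two steps, namely that overwriting the endpoints in the second step cannot destroy the $1$ installed in position $k$ by the first. When $1 < k < n$ this is clear because positions $1$ and $n$ are disjoint from position $k$; and when $k \in \{1, n\}$ the overwriting simply rewrites a $1$ into a coordinate that we already want to be $1$, so the conclusion holds in every case (indeed, for $k \in \{1,n\}$ the statement already follows from Lemma \ref{invariantFirstLast} alone).
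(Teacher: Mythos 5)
Your proof is correct and follows exactly the paper's own argument: rescale by $m_k^{-1}$ via Lemma \ref{multiply} to place a $1$ in the $k$-th position, then invoke Lemma \ref{invariantFirstLast} to set the first and last entries to $1$. The extra remark handling $k \in \{1, n\}$ is a small but welcome addition of care beyond the paper's terser presentation.
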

\begin{proof}
Take $b$ to be the inverse in $Z_r$ of $m_{k}$ in Lemma \ref{multiply}. Then $\B{r}{m} = \BBB{r}{ m_{k}^{-1} \cdot \overline m} = \Bm{r}{m}$ where the last equality follows from Lemma \ref{invariantFirstLast}. 
\end{proof}

\begin{comment}
\begin{corollary} \todo{Remove this corollary and talk about how 3 entries of m can be 1. cf. main theorem}
Let $\overline m = (m_1, \dots, m_n)$ and $\overline {m'}=(m_n, \dots, m_1)$. Then $\B{r}{m}$ is the same as $\B{r}{m'}$ reflected in the lower-left to upper-right diagonal.
\end{corollary}
\begin{proof}
If we consider the change: $ \overline{m'} = (m_1, \dots, m_n)$ to $ \overline{m''} = (-m_n, \dots, -m_1)$ all paths forwards turn into a path backwards. 
By Lemma \ref{multiply} we can multiply the vectors by $-1$. Hence the theorem follows. 
\end{proof}
\end{comment}

\subsection{Entry specific properties and formulae} 
To proceed with any further results we need some combinatorial formulae and properties to be in place. 
\begin{theorem}\label{closedFormula}
Let $\overline 1 = (1, \dots 1)$. Then $\B{r}{1}\langle i, j\rangle = \binom{r-1+(j-i)}{j-i}$.
\end{theorem}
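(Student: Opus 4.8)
The plan is to count the paths in the graph $\MM{r}{1}$ rather than directly in $\NN{r}{1}$: by Lemma \ref{equalMatrices} we have $\B{r}{1} = \A{r}{1}$, and the legality condition of Definition \ref{A} (``do not pass through the $0$-vertex of any other subgraph'') is more transparent to analyse. So I would instead compute $\A{r}{1}\ind{i}{j}$, the number of legal paths in $\MM{r}{1}$ from the $0$-vertex $g_{i,0}$ to the $0$-vertex $g_{j,0}$.

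First I would record the effect of setting $\overline m = \overline 1$. Every edge of $\MM{r}{1}$ runs from $g_{s_1, t_1}$ to $g_{s_2, [t_1+1]_r}$ with $s_2 \ge s_1$, so each edge increases the second coordinate by exactly $1$ modulo $r$ while moving weakly forward among the subgraphs. Hence the second coordinate after $k$ edges equals $[k]_r$, and a path can land on a $0$-vertex only after a number of edges divisible by $r$. Since a legal path meets a $0$-vertex only at its two endpoints, it must consist of exactly $r$ edges, and its sequence of second coordinates is forced to be $0,1,\dots,r-1,0$.

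The key observation is then that such a legal path is completely determined by its sequence of subgraph indices $i = s_0 \le s_1 \le \dots \le s_r = j$: the second coordinates are already pinned down, and for each $k$ the required edge $g_{s_{k-1},\,k-1}\to g_{s_k,\,k}$ exists precisely when $s_k \ge s_{k-1}$. This yields a bijection between legal paths and weakly increasing sequences from $i$ to $j$ of length $r+1$; equivalently, setting $d_k = s_k - s_{k-1}\ge 0$, a bijection with the compositions $d_1 + \dots + d_r = j-i$ of $j-i$ into $r$ nonnegative parts. When $j<i$ there are no such paths, matching the convention that the binomial coefficient vanishes. Counting the compositions by stars and bars gives
\[
\A{r}{1}\ind{i}{j} \;=\; \binom{(j-i)+r-1}{r-1} \;=\; \binom{r-1+(j-i)}{j-i},
\]
which is the claimed formula.

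I expect the main obstacle to be the reduction in the second step, namely arguing rigorously that legality forces exactly $r$ edges and that the walk carries no information beyond its subgraph-index sequence. Everything hinges on $\overline m = \overline 1$ making the second coordinate a deterministic function of the edge count, so that all combinatorial freedom is squeezed into the weakly increasing choice of subgraphs; once this is in place, the enumeration is routine stars and bars.
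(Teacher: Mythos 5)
Your proof is correct, but it takes a route dual to the paper's. The paper counts directly in $\NN{r}{1}$: since every edge there either advances the subgraph index by one or advances the position by one, a legal path from $c_{i,0}$ to $c_{j,0}$ is encoded by the tuple $(a_0,\dots,a_{j-i})$ recording how many position-steps it takes inside each subgraph, subject to $\sum_l a_l = r$, $a_l\geq 0$, and $a_0>0$; stars and bars (on $r-1$ distributed into $j-i+1$ parts) then gives $\binom{r-1+(j-i)}{j-i}$. You instead transfer to $\MM{r}{1}$ via Lemma \ref{equalMatrices}, observe that the position coordinate is a deterministic function of the edge count, and encode a path by where the $j-i$ subgraph advances fall among the $r$ forced time steps, i.e.\ by a composition of $j-i$ into $r$ nonnegative parts, giving $\binom{(j-i)+r-1}{r-1}$ --- the conjugate encoding of the same set of objects. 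So the combinatorial core (bijection onto compositions, then stars and bars) is identical and only the bookkeeping differs: the paper's version needs no appeal to Lemma \ref{equalMatrices} and works with the more explicitly stated legality condition of Definition \ref{B}, while yours buys a very clean ``exactly $r$ edges'' argument and handles the case $j<i$ explicitly, which the paper does not bother to do. One point deserves a sentence in your write-up: you assert that a legal path in $\MM{r}{1}$ meets a $0$-vertex only at its two endpoints, whereas Definition \ref{A} literally forbids only the $0$-vertices of \emph{other} subgraphs, so an interior return to $g_{i,0}$ (a full loop inside the $i$th subgraph) is not excluded by the letter of the definition. Your reading is clearly the intended one --- otherwise the entries of $\A{r}{1}$ would be infinite --- and the paper is equally implicit about this point, so it is not a gap, but since your argument leans on it to force the path length to be exactly $r$, it is worth stating.
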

\begin{proof}
Since every directed edge in $\NN{r}{1}$ either goes from $c_{s, t}$ to $c_{s+1, t}$ or from $c_{s, t}$ to $c_{s, t+1}$, we can characterise any directed path from the 0-vertex of the $i$th subgraph to the 0-vertex of the $j$th subgraph satisfying Definition \ref{B} by an $j-i+1$-tuple $(a_0, \dots, a_{j-i})$, such that $a_s$ is the number of edges of the form $c_{s,t}\rightarrow c_{s,t+1}$ that occur in the path. A necessary and sufficient condition for such a tuple to characterise a directed path of the desired form is that $\sum_{l=0}^{j-i}a_l = r$, $a_s\geq 0$ for all $s>0$, and $a_0>0$.

Thus, $\B{r}{1}\langle i, j\rangle$ is equal to the number of ways that $r$ can be written as the sum of $j-i+1$ non-negative integers, where the first one has to be at least 1. This is equivalent to the number of ways to write $r-1$ as the sum of $j-i+1$ non-negative integers. The latter being a known combinatorial problem, we get
\begin{equation*}
 \B{r}{1}\langle i, j\rangle = \binom{r-1+(j-i)}{j-i}.
\end{equation*}
\end{proof}

\begin{corollary} \label{smallcase}
Let $r>2$ and $\overline m \in (Z_r)^n$. Then $\B{r}{m}\langle i, i\rangle =1$, $\B rm = \langle i, i+1\rangle=r$, and $\B rm \langle i, i+2\rangle=\frac{r(r+1)}{2}$ for all $i$.
\end{corollary}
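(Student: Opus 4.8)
The plan is to reduce each of the three near-diagonal entries to the all-ones case already handled in Theorem \ref{closedFormula}, where $\B{r}{1}\langle i,i\rangle=\binom{r-1}{0}=1$, $\B{r}{1}\langle i,i+1\rangle=\binom{r}{1}=r$, and $\B{r}{1}\langle i,i+2\rangle=\binom{r+1}{2}=\tfrac{r(r+1)}{2}$ --- precisely the asserted values. So it suffices to show that these particular entries are insensitive to the choice of $\overline m$, and the tool for this is a locality principle together with the invariance results already proved.

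First I would record the locality principle: because the only inter-subgraph edges of $\NN rm$ run from subgraph $s$ to subgraph $s+1$, every legal path counted by $\B rm\langle i,j\rangle$ is confined to subgraphs $i,i+1,\dots,j$. Relabelling those subgraphs as $1,\dots,j-i+1$ identifies this sub-configuration with $\NNN r{m_i,\dots,m_j}$ and carries legal paths to legal paths, so that
\[
\B rm\langle i,j\rangle=\BBB r{(m_i,\dots,m_j)}\langle 1,\,j-i+1\rangle .
\]
Thus each entry depends only on the length-$(j-i+1)$ window $(m_i,\dots,m_j)$.

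With this in hand the three cases follow by applying the earlier lemmas to the short window vector. For $\langle i,i\rangle$ the window has length $1$, and Lemma \ref{invariantFirstLast} (its $n=1$ instance) makes $\BBB r{(m_i)}$ independent of $m_i$, so the entry equals the all-ones value $\binom{r-1}{0}=1$. For $\langle i,i+1\rangle$ the window has length $2$ and both entries are the first and last, so Lemma \ref{invariantFirstLast} renders $\BBB r{(m_i,m_{i+1})}$ fully independent of its entries, giving $\binom{r}{1}=r$. For $\langle i,i+2\rangle$ the window $(m_i,m_{i+1},m_{i+2})$ has length $3$: Lemma \ref{invariantFirstLast} removes the dependence on the two outer entries, while Lemma \ref{multiply} with $b=m_{i+1}^{-1}$ normalises the middle entry to $1$; the entry therefore equals $\binom{r+1}{2}=\tfrac{r(r+1)}{2}$.

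The main obstacle is making the locality principle airtight, i.e.\ verifying that the relabelling of subgraphs $i,\dots,j$ really is a legality-preserving bijection. The delicate point is the bookkeeping at the boundary $0$-vertices $c_{i,0}$ and $c_{j,0}$: one must check that the clause of Definition \ref{B} forbidding early visits to an intermediate $0$-vertex, and the clause forbidding paths that visit $0$-vertices exclusively, translate correctly when subgraph $i$ becomes the first subgraph and subgraph $j$ the last. Once this correspondence is confirmed, the reduction to Theorem \ref{closedFormula} is immediate and what remains is just the three binomial evaluations above.
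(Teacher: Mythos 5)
Your proof is correct and takes essentially the same approach as the paper's: both rest on the locality observation that these entries depend only on the window $(m_i,m_{i+1},m_{i+2})$, normalise that window to all ones using the invariance results (the paper cites Corollary \ref{change3}, which is exactly Lemmas \ref{invariantFirstLast} and \ref{multiply} packaged together), and then read off the values from Theorem \ref{closedFormula}. If anything, you are more careful than the paper, which asserts the locality principle without further justification.
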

\begin{proof} 
When we consider only $\B{r}{m}\langle i, i\rangle, \B{r}{m}\langle i, i+2\rangle$, and $\B{r}{m}\langle i, i+2\rangle$, their values depend solely on the vector $(m_i, m_{i+1}, m_{i+2})$, so we can assume by Corollary \ref{change3} that $m_i=m_{i+1}=m_{i+2}=1$. The conclusion then follows trivially from Theorem \ref{closedFormula}
\end{proof}

From the corollary we immediately obtain the following result, which also appeared in \cite{seerapws:gccfg} and we will note for future use. 

\begin{corollary}\label{3divides}
Let $r>2$. Then $\widetilde \varphi(r) \geq 4$. 
\end{corollary}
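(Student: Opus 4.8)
The plan is to establish that $\widetilde\varphi(r)\geq 4$ by showing that $\varphi_r(n)=1$ for all $n<4$, i.e.\ that for $n\in\{1,2,3\}$ there is only a single $\sim$-equivalence class of matrices in $S_{r,n}$. Since $\widetilde\varphi(r)$ is defined as the least $n$ with $\varphi_r(n)>1$, it suffices to verify that any two matrices arising from vectors $\overline m,\overline m'\in(Z_r)^n$ are $\sim$-equivalent whenever $n\leq 3$.

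First I would observe that Corollary \ref{smallcase} already pins down \emph{every} entry of $\B{r}{m}$ that can occur for $n\leq 3$: the diagonal entries equal $1$, the entries $\langle i,i+1\rangle$ equal $r$, and the entries $\langle i,i+2\rangle$ equal $r(r+1)/2$, and crucially these values are \emph{independent of $\overline m$}. Consequently, for fixed $r$ and any $n\leq 3$, all vectors $\overline m$ produce the exact same matrix $\B{r}{m}$; that is, $S_{r,n}$ is a singleton. A set with one element has exactly one equivalence class under any relation, so $\varphi_r(n)=1$ for $n=1,2,3$, which forces $\widetilde\varphi(r)\geq 4$.

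Concretely, for $n=3$ the matrix $\B{r}{m}$ is the upper triangular $3\times 3$ matrix whose nonzero entries are $\langle i,i\rangle=1$, $\langle i,i+1\rangle=r$, and $\langle 1,3\rangle=r(r+1)/2$, and Corollary \ref{smallcase} guarantees this matrix does not depend on the choice of $\overline m\in(Z_r)^3$. The cases $n=1$ and $n=2$ are the obvious sub-blocks (the $1\times 1$ matrix $[1]$ and the $2\times 2$ matrix with diagonal $1$ and off-diagonal $r$), again independent of $\overline m$. Since in each case there is literally only one matrix in $S_{r,n}$, reflexivity of $\cong$ (equivalently, taking $X=Y=I$ in the definition of upper triangular equivalence) makes $\varphi_r(n)=1$ immediate, without any need to construct nontrivial pivot sequences.

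The main point to get right — though it is not really an obstacle — is the logical direction: I want to conclude $\widetilde\varphi(r)\geq 4$, which means ruling out $\widetilde\varphi(r)\in\{1,2,3\}$, and this is exactly the statement $\varphi_r(1)=\varphi_r(2)=\varphi_r(3)=1$. The entire content is therefore the $\overline m$-independence of the small entries supplied by Corollary \ref{smallcase}; no delicate combinatorics or matrix manipulation is required, and the proof is essentially a one-line deduction from the corollary.
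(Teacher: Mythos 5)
Your proof is correct and follows exactly the paper's own argument: both deduce from Corollary \ref{smallcase} that for $n\leq 3$ the matrix $\B{r}{m}$ is independent of $\overline m$, so $S_{r,n}$ contains a single matrix and hence a single equivalence class, giving $\varphi_r(n)=1$ for $n\leq 3$ and thus $\widetilde\varphi(r)\geq 4$. Your added explicit description of the matrices for $n=1,2,3$ is a harmless elaboration of the same idea.
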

\begin{proof}
By Corollary \ref{smallcase} we have that for $n \leq 3$ the matrices $\B{m}{r}$ do not depend on $\overline{m}$. Thus, they are all equal when $\overline{m}$ varies and we can only have one equivalence class. 
\end{proof}
In fact Eilers et al.\  \cite{seerapws:gccfg} established that $\widetilde \varphi(r) = 4$ if and only if $3\mid r$. As stated earlier, we shall see a general closed expression for $\widetilde \varphi$ in a later section.

\subsection{Equivalence of matrices}
To show equivalence of matrices we need to do some manipulations with matrices that might be a bit technical. So the following lemma simply establishes the equivalence of two matrices where every entry except for the diagonal is divisible by either $r$ or $\frac r2$ when $r$ is even.
\begin{lemma}\label{eqMatrices}
Let $r>2$ be given such that $r=2^ts$ for some $t\in \{0, 1\}$ and odd $s\in \N$.
 Suppose  that the two $n\times n$ upper triangular integer matrices $A, B$ have 1's in their diagonal, $r$ on the diagonal from $\langle 1, 2\rangle$ to $\langle n-1, n\rangle$, $\frac{r(r+1)}{2}$ on the diagonal from $\langle 1, 3\rangle$ to $\langle n-2, n\rangle$. Further, suppose $s$ divides every entry of $A-I$ and $B-I$. Then $A\sim B$.
\end{lemma}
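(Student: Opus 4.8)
The plan is to exhibit a single canonical representative for the $\cong$-class and reduce both $A-I$ and $B-I$ to it. Write $C=A-I$ and $D=B-I$; by hypothesis these are strictly upper triangular, agree with the prescribed values $r$ and $\frac{r(r+1)}{2}$ on the first two superdiagonals, and have every entry divisible by $s$. Let $C_0$ be the strictly upper triangular matrix carrying exactly these two superdiagonals and zeros elsewhere. Since $\cong$ is an equivalence relation it suffices to prove $C\cong C_0$ (the same argument then yields $D\cong C_0$). For this I will use the pivot description of $\cong$: I may add a multiple of row $k$ to row $l$ when $k>l$, and a multiple of column $k$ to column $l$ when $k<l$. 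The goal is to clear every entry $C_{i,j}$ with $j-i\ge 3$ while keeping the two superdiagonals fixed and keeping all entries divisible by $s$ throughout; the latter is automatic, since each move only adds an integer multiple of an $s$-divisible entry to another entry.

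The arithmetic input is the identity $\gcd\!\left(r,\tfrac{r(r+1)}{2}\right)=s$, which holds exactly because $t\le 1$: for $t=0$ one has $\tfrac{r(r+1)}{2}=s\cdot\tfrac{s+1}{2}$, and for $t=1$ one has $\tfrac{r(r+1)}{2}=s(2s+1)$ with $2s+1$ odd, so the gcd is $s$ in both cases, whereas for $4\mid r$ it would jump to $2^{t-1}s$ and the method would fail. Fix a Bézout relation $s=\alpha r+\beta\,\tfrac{r(r+1)}{2}$. Because each entry to be cleared is a multiple of $s$, this relation lets me change any target entry by exactly that entry, using the first superdiagonal (value $r$) and the second superdiagonal (value $\frac{r(r+1)}{2}$) as pivots.

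I then clear the entries in order of increasing diagonal distance $d=j-i$ (starting at $d=3$), and within a fixed $d$ in order of increasing $i$. To annihilate $C_{i,i+d}$ I apply three moves, scaled by $\kappa=-C_{i,i+d}/s$: add $\kappa\alpha$ times column $i+1$ to column $i+d$; add $\kappa\beta$ times column $i+2$ to column $i+d$; and add $-\kappa\beta$ times row $i+d-1$ to row $i+1$. Together the first two change $C_{i,i+d}$ by $\kappa\alpha r+\kappa\beta\frac{r(r+1)}{2}=\kappa s=-C_{i,i+d}$; the second also corrupts the single entry $C_{i+1,i+d}$ by $\kappa\beta r$, which the third cancels exactly since $C_{i+d-1,i+d}=r$. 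Tracking which rows and columns each move meets, one checks that both superdiagonals are preserved (for $d=3$ the second-superdiagonal entry $C_{i+1,i+3}$ is momentarily disturbed and then restored), that the only entry of distance $<d$ ever touched is $C_{i+1,i+d}$, which is put back, and that the only distance-$d$ entry disturbed apart from the target is $C_{i+1,i+d+1}$, which comes later in the chosen ordering and is cleared in turn. Every side effect on an entry of distance $>d$ is a multiple of $s$, so the inductive hypotheses survive into the next round. Iterating annihilates all entries with $j-i\ge 3$, giving $C\cong C_0$ and, identically, $D\cong C_0$; therefore $A-I\cong B-I$, that is $A\sim B$.

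The main obstacle is precisely the even case $r=2s$: the first superdiagonal alone produces only even multiples of $s$, so one is forced to use the second superdiagonal as a pivot, and the difficulty is to do so without permanently disturbing the already-fixed superdiagonals or the already-cleared lower-distance entries. The three-move combination above, together with the increasing-distance and increasing-$i$ ordering, is exactly what confines the unavoidable collateral damage to entries that are either immediately repaired or cleared later.
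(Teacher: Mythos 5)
Your proof is correct, and it gets there by a genuinely different route than the paper's. The paper also works by reducing both matrices to a canonical form, but it splits into cases: for odd $r$ every entry is divisible by $r$, so everything above the first superdiagonal (including the $\frac{r(r+1)}{2}$-diagonal) is cleared outright; for $r\equiv 2\pmod 4$ it factors out $\frac r2$, leaving $2$ and the odd number $r+1$ on the two superdiagonals, and then runs an induction on $n$: the top-left $(n-1)\times(n-1)$ corner is put in canonical form by the induction hypothesis, and each entry of the last column is first made even by subtracting the column containing $r+1$, then killed by repeatedly subtracting the $2$ in position $\langle n-1,n\rangle$. You replace both the case split and the induction by the single Bézout relation $s=\alpha r+\beta\,\frac{r(r+1)}{2}$ -- legitimate precisely because $\gcd\!\left(r,\tfrac{r(r+1)}{2}\right)=s$ when $4\nmid r$ -- together with a three-move local gadget and a global ordering (increasing diagonal distance $d$, then increasing row index) that confines the collateral damage to entries which are either immediately repaired (the entry $C_{i+1,i+d}$, including the momentary disturbance of the second superdiagonal when $d=3$) or still awaiting clearance ($C_{i+1,i+d+1}$ and entries of distance greater than $d$, all of which remain divisible by $s$); I checked this bookkeeping and it holds, including the legality of all three pivots for $d\geq 3$. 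The two arguments rest on the same arithmetic fact -- the paper's ``make it even, then subtract $2$'s'' step is exactly the identity $1=\gcd(2,r+1)$ in disguise -- but yours is uniform in $t\in\{0,1\}$, avoids induction on matrix size, and makes explicit why the statement must fail for $4\mid r$, while the paper's column-by-column induction buys much lighter side-effect accounting, since at each stage only the last column is ever touched.
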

\begin{proof}
We need to show that we can transform the matrix $A-I$ into $B-I$ by integer row and column operations. If $r$ is odd, every entry of $A-I$ and $B-I$ is divisible by $r$ by assumption, and the matrices are of the form
\begin{align*}
r \begin{pmatrix}
          0&1&   &  \\
          0&0&1& \\
          \vdots &\vdots&&\ddots \\
          0 & 0& \cdots &0 & 1\\
          0& 0& \cdots& 0&0
      \end{pmatrix}
\end{align*}
with integer entries in the upper right corner. All such matrices can easily be transformed into an upper triangular matrix with zeros everywhere except for the diagonal from $\langle 1, 2\rangle $ to $\langle n-1, n\rangle$ by row and column operations, so since $\sim$ is an equivalence relation, we get $A\sim B$.

Now, assume that $2\mid r$, but $ 4 \nmid r$ . Then the matrices $A-I$ and $B-I$ are of the form
\begin{align*}
A-I = \frac r2 \begin{pmatrix}
          0&2&r+1   &  \\
          0&0&2&r+1 \\
          \vdots &\vdots&&\ddots&\ddots \\
          0 & 0& \cdots &0 & 2&r+1\\
          0 & 0& \cdots &0 &0& 2\\
          0& 0& \cdots& 0&0&0
      \end{pmatrix}
\end{align*}
with integer entries in the upper right corner. We show that such a matrix can be transformed by row and column operations into the matrix
\begin{align*}
C - I := \frac r2 \begin{pmatrix}
          0&2&r+1   &0&\cdots & \cdots & 0  \\
          0&0&2&r+1 &0&\cdots & 0\\
          \vdots &\vdots&&\ddots&\ddots &\ddots & \vdots\\
          0 & 0& \cdots &0 & 2&r+1&0\\
          0 & 0& \cdots &0&0 & 2&r+1\\
          0 & 0& \cdots &0 &0&0& 2\\
          0& 0& \cdots& 0&0&0&0
      \end{pmatrix},
\end{align*}
which by transitivity shows $A\sim B$.

We proceed by induction on $n$. For $n=1, 2, 3$ all matrices on the described form will be identical and thus $\sim$-equivalent to $C$.

Now, assume that for $k<n$ every matrix on the described form are $\sim$-equivalent to $C$, and consider the $n\times n$ matrix $A$ on that form. By the induction hypothesis and considering $A$ as an $n-1\times n-1$ matrix with an added row and column, we can reduce $A$ by row and column operations to a matrix with diagonals like $C-I$ and zeroes everywhere else except for in the rightmost column. 

Using column operations we can now make every entry of that rightmost column (except for the $r-1$-entry) even without changing the rest of the matrix. If an entry of the column is odd, we can just subtract $r-1$ from it by subtracting the appropriate column.

Having made all those entries even they can all be eliminated by subtracting the 2 in the $(n-1)$st row an appropriate amount of times. And then the matrix $C-I$ is achieved, which concludes the proof.
\end{proof}
\noindent
Another useful result on when matrices are not $\sim$-equivalent is the following.
\begin{lemma} \label{notsim}
Let $A$ and $B$ be $n\times n$ upper rectangular matrices with $1$ in their diagonal. If every entry of $A-I$ and $B-I$ except for the entry $\langle 1, n\rangle$ is divisible by $k\in \N$ and $(A-I)\langle 1, n\rangle \not\equiv (B-I)\langle 1, n\rangle\pmod k$, then $A\not\sim B$.
\end{lemma}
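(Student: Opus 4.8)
The plan is to argue by contradiction, extracting the $\langle 1, n\rangle$ entry of the defining matrix equation modulo $k$. Suppose $A \sim B$; by Definition \ref{maindefinition} this means $A - I \cong B - I$, so there are upper triangular matrices $X, Y$ with $1$ on the diagonal satisfying $X(A - I) = (B - I)Y$. Writing $C = A - I$ and $D = B - I$, both are upper triangular with zero diagonal, and by hypothesis every entry of $C$ and of $D$ other than the corner entry $\langle 1, n\rangle$ is divisible by $k$. I would compute the $\langle 1, n\rangle$ entry of each side of $XC = DY$ reduced modulo $k$.

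For the left-hand side, $(XC)\langle 1, n\rangle = \sum_{l=1}^{n} X\langle 1, l\rangle\, C\langle l, n\rangle$. Since $X$ is upper triangular with $X\langle 1,1\rangle = 1$, the $l=1$ term is exactly $C\langle 1, n\rangle$. Every remaining term has $l \geq 2$, so it carries the factor $C\langle l, n\rangle$ with $l \geq 2$, which is an entry distinct from the corner $\langle 1, n\rangle$ (either an off-corner entry, hence divisible by $k$, or the vanishing diagonal entry $C\langle n,n\rangle = 0$). Therefore $(XC)\langle 1, n\rangle \equiv C\langle 1, n\rangle \pmod k$. Symmetrically, $(DY)\langle 1, n\rangle = \sum_{l=1}^{n} D\langle 1, l\rangle\, Y\langle l, n\rangle$, and here $Y\langle n, n\rangle = 1$ isolates the $l = n$ term as $D\langle 1, n\rangle$, while every other surviving term ($l \leq n-1$) carries the factor $D\langle 1, l\rangle$ with $l \leq n-1$, again an entry other than the corner and thus divisible by $k$. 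Hence $(DY)\langle 1, n\rangle \equiv D\langle 1, n\rangle \pmod k$. Equating the two sides of $XC = DY$ then forces $C\langle 1, n\rangle \equiv D\langle 1, n\rangle \pmod k$, that is $(A-I)\langle 1, n\rangle \equiv (B-I)\langle 1, n\rangle \pmod k$, contradicting the hypothesis; so $A \not\sim B$.

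The only delicate point — and the one I would double-check — is the bookkeeping that on each side exactly one term survives modulo $k$ and that it is the corner entry. This rests on two structural facts working together: the unit diagonal of $X$ and $Y$ (so the surviving coefficients $X\langle 1,1\rangle$ and $Y\langle n,n\rangle$ equal $1$, not merely some unit), and the strict upper-triangularity of $C$ and $D$ combined with the divisibility hypothesis (so that every cross term factors through an off-corner entry, which is $\equiv 0 \pmod k$). There is no genuine obstacle beyond this index tracking; notably, no property of $X, Y$ beyond their prescribed shape is required, which is why the statement holds for the full equivalence relation $\sim$ rather than only for special reductions.
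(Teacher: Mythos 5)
Your proof is correct and rests on the same key observation as the paper's: since every entry of $A-I$ and $B-I$ other than the corner $\langle 1,n\rangle$ is divisible by $k$, that corner entry is an invariant modulo $k$ of the equivalence. The paper states this as invariance under the restricted row/column operations, whereas you expand the $\langle 1,n\rangle$ entry of $X(A-I)=(B-I)Y$ directly and isolate the surviving terms via the unit diagonals of $X$ and $Y$; this is the same argument in different clothing, yours being marginally more self-contained since it works straight from the definition of $\cong$ rather than the pivot characterization.
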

\begin{proof}
Since every entry of $A-I$ and $B-I$ except the upper right is divisible by $k$, the upper right entry is invariant modulo $k$ under row and column operations. The conclusion follows.
\end{proof}

\section{The general case}
In general it is very difficult to find an explicit formula for $\B{r}{m}\langle i, j\rangle$ given arbitrary $r$ and $\overline m$. However, for the purpose of bounding $\varphi_r(n)$ from below and deciding $\widetilde \varphi(r)$ in the case where $4\nmid r$, it turns out to be sufficient to be able to compute $\B{r}{m}\langle i, j\rangle$ modulo $r$. 

Thus, this section sets out to develop techniques for assessing $\B{r}{m}\langle i, j\rangle$ modulo $r$. The main technical result is Theorem \ref{main} from which the exact value of $\widetilde \varphi(r)$ follows for $4\nmid r$ and a lower bound on $\varphi_r(n)$, which appears to be an equality when $4\nmid r$ (see Conjecture \ref{classes_conjecture}). Throughout the section, we will define $0^0=1$ and $0!=1$ for sake of simplicity.
\\

We start with the following lemma, which formally captures the technique which will be used multiple times in the proof of Theorem \ref{main}. 
\begin{lemma}\label{reduceSums}
	Let $D$ be a finite set, $p$ be a prime, $k,j\in \N$, $s, b\colon D\to \Z$ be functions, and $a:\N_0\times \N_0 \to \Z$ satisfies $\gcd \left( a(m, m), p \right)=1$ for all $m \leq j$. Define the function
	\begin{equation*}
		w(l) = \sum_{d\in D} s(d) \sum_{t=0}^l a(t, l)b(d)^t
	\end{equation*}
	and assume that $p^k\mid w(l)$ for $0\leq l < j$. Then
	\begin{equation*}
		w(j) \equiv \sum_{d\in D}a(j, j)s(d)b(d)^j \pmod{p^k}.
	\end{equation*}
\end{lemma}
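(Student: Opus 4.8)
The plan is to decouple the sum over $D$ from the sum over $t$ by introducing the power sums
\[
	S_t = \sum_{d\in D} s(d)\, b(d)^t, \qquad t\geq 0.
\]
With this notation, exchanging the order of summation turns the defining expression into a single triangular sum
\[
	w(l) = \sum_{t=0}^{l} a(t,l)\, S_t,
\]
so the hypothesis reads $p^k \mid \sum_{t=0}^{l} a(t,l) S_t$ for every $0\leq l< j$, and the desired conclusion becomes $w(j)\equiv a(j,j) S_j \pmod{p^k}$, i.e.\ that all the lower-order terms $a(t,j)S_t$ with $t<j$ vanish modulo $p^k$. Observe that expanding $S_j$ recovers exactly the claimed right-hand side $\sum_{d\in D} a(j,j)s(d)b(d)^j$.

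First I would note that the coprimality hypothesis $\gcd(a(m,m),p)=1$ upgrades to $\gcd(a(m,m),p^k)=1$, so each diagonal coefficient $a(m,m)$ is a unit modulo $p^k$; in particular $p^k \mid a(m,m)\,x$ forces $p^k\mid x$. This cancellation property is the only real leverage in the statement and is the heart of the argument.

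Next I would prove by induction on $l$ that $p^k \mid S_l$ for all $0\leq l<j$. For the base case $l=0$ we have $w(0)=a(0,0)S_0$, and $p^k\mid w(0)$ together with $a(0,0)$ being a unit gives $p^k\mid S_0$. For the inductive step, assume $p^k\mid S_t$ for all $t<l$, where $l<j$. In $w(l)=\sum_{t=0}^{l} a(t,l)S_t$ every term with $t<l$ is then divisible by $p^k$, and since $p^k\mid w(l)$ by hypothesis, what remains is $p^k \mid a(l,l)S_l$; cancelling the unit $a(l,l)$ yields $p^k\mid S_l$.

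Finally, applying this to $w(j)=\sum_{t=0}^{j} a(t,j)S_t$, every term with $t<j$ is divisible by $p^k$, so $w(j)\equiv a(j,j)S_j\pmod{p^k}$, which is the claim after expanding $S_j$. The only point demanding care is the reindexing step and keeping track of the triangular dependence of $a(t,l)$ on its second argument; beyond that the induction is routine, and I do not anticipate a genuine obstacle.
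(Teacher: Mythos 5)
Your proposal is correct and follows essentially the same route as the paper's proof: the same strong induction showing $p^k \mid \sum_{d\in D} s(d)b(d)^t$ for all $t<j$, with the base case and inductive step both hinging on $a(m,m)$ being a unit modulo $p^k$. Introducing the power sums $S_t$ and exchanging the order of summation is only a notational streamlining of what the paper does with the double sums directly.
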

\begin{proof}
First, we show by strong induction over $t$ that $p^k\mid \sum_{d\in D} s(d)b(d)^t$ for all $t<j$. For $t=0$ we have $p^k\mid \sum_{d\in D} s(d)a(0, 0)$ and since $\gcd(p^k, a(0,0))=1$ we get $p^k\mid \sum_{d\in D} s(d)$.
 Now, assume that $p^k\mid \sum_{d\in D} s(d)b(d)^t$ for all $t$ satisfying $0\leq t<m$ for some $m<j$. Then 
\begin{align*}
		0\equiv w(m) &= \sum_{d\in D} s(d) \sum_{t=0}^m a(t, m)b(d)^t\\
		             &\equiv \sum_{d\in D} a(m, m)s(d)b(d)^m \pmod{p^k}
	\end{align*}
	so $p^k\mid \sum_{d\in D} s(d)b(d)^m$ as $\gcd(a(m, m), p^k) = 1.$

Second, the fact that $p^k\mid \sum_{d\in D} s(d)b(d)^t$ for all $t<j$ yields
\begin{align*}
		w(j) &= \sum_{d\in D} s(d) \sum_{t=0}^j a(t, j)b(d)^t\\
			 &\equiv \sum_{d\in D}a(j, j)s(d)b(d)^j \pmod{p^k}.
	\end{align*}
\end{proof}
\noindent

\begin{comment}
A further technicality is the following lemma.\todo{Frederik finder referencer og retter teksten til.}
\begin{lemma}\label{bernoulli}
    Let $j\in \N$ and $B_l$ be the $l$th Bernoulli number. Then $j!B_l$ is an integer for every $0\leq l<j$.
\end{lemma}
\begin{proof}
    For odd $l$, we have $B_l=\frac 12$ if $l=1$ and $B_l=0$ otherwise so in that case the conclusion is clear.
For even $l$, the Von Staudt-Clausen Theorem \cite{clausen, vonStaudt} states that
    \begin{align*}
        B_l + \sum_{\substack{p \text{ prime}\\ p-1\mid l }}\frac 1p\in \Z,
    \end{align*}
    which implies that the denominator of $B_l$ is exactly $$\prod_{\substack{p \text{ prime}\\ p-1\mid l }}p.$$ 
    Since any prime $p$ such that $p-1\mid l$ must satisfy $p\leq j$, it follows that $j!B_l$ is an integer.
\end{proof}

\end{comment} 
\noindent Having proved the lemma we now turn to the main technical theorem of the section from which the remaining results follow naturally.

\begin{theorem}\label{main}
Let $p$ be an odd prime; $r>2$ and $n\leq p+1$ be given; and $\overline m=(m_1, \dots, m_n)$. Suppose that $p^k\mid r$ and $p^k\mid \B rs\ind 1a$ for every $\overline s\in (Z_r)^{p+1}$ and every $a<n$. Then 
\begin{align*}
	\B{r}{m}\langle 1, n\rangle \equiv \binom{r+n-2}{n-1} \prod_{k=2}^{n-1}m_k^{-1} \pmod{p^k}.
\end{align*}
\end{theorem}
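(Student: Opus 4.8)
The plan is to count the legal paths of Definition \ref{B} directly, organised by the first time a path returns to a $0$-vertex, and then to extract the leading term modulo $p^k$ with Lemma \ref{reduceSums}. By Corollary \ref{change3} I may assume $m_1=m_n=1$, so that only $m_2,\dots,m_{n-1}$ can survive. First I would peel off a single return: call a legal path from $c_{1,0}$ to $c_{e,0}$ an \emph{arc} if it meets a $0$-vertex only at its two ends, and let $H_e$ count such arcs. The legality clause of Definition \ref{B} forces a path, once it reaches a $0$-vertex of a later subgraph, to stay on $0$-vertices; hence every legal path to $c_{n,0}$ is an arc to some $c_{e,0}$ followed by the forced coast $c_{e,0}\to\dots\to c_{n,0}$, giving $\B{r}{m}\ind1n=\sum_{e=1}^n H_e$. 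In particular $\B{r}{m}\ind1n=\B{r}{m}\ind1{n-1}+H_n$, so the hypothesis (applied at $a=n-1$) yields $\B{r}{m}\ind1{n-1}\equiv 0$, whence $\B{r}{m}\ind1n\equiv H_n\pmod{p^k}$; more generally every $\B{r}{m}\ind1{a}$ with $a<n$ is $\equiv 0$.

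Next I would turn the cumulative counts into an iterated sum. Because each $m_s$ is a unit, the number of advance-steps carrying the path through subgraph $s$ from its entry position to its exit position \emph{without} hitting $0$ is uniquely determined; writing $x_s=[-P_sm_s^{-1}]_r$ for the exit position $P_s$, the ``avoid $0$'' requirement collapses to the single inequality $x_{s+1}\le[\mu_sx_s]_r$ with $\mu_s=[m_sm_{s+1}^{-1}]_r$. Summing over the admissible exit positions presents, for each $l$,
\[
\B{r}{m}\ind1{l+2}=\sum_{x_1=1}^{r-1}\mathcal Q_l\!\left([\mu_1x_1]_r\right),
\]
where $\mathcal Q_l$ is a polynomial of degree $l$ whose coefficients are assembled from the inner iterated sums. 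Here $p^k\mid r$ is decisive: since $[\mu x]_r\equiv\mu x$ and every discarded term $r\floor{\mu x/r}$ stays divisible by $r$ even after summation, the inner sums may be evaluated by Faulhaber's formula with their bounds replaced by residues modulo $p^k$, and the top coefficient of $\mathcal Q_l$ comes out to $\tfrac{1}{l!}\prod_{s=2}^{l}\mu_s^{\,l+1-s}$.

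This is exactly the shape of Lemma \ref{reduceSums}: take $D=\{1,\dots,r-1\}$, $s(d)=1$, $b(d)=[\mu_1d]_r$, $j=n-2$, and $w(l)=\B{r}{m}\ind1{l+2}$, whose coefficient $a(l,l)=\tfrac{1}{l!}\prod_{s=2}^{l}\mu_s^{\,l+1-s}$ is a unit modulo $p$ because $l\le n-2\le p-1$ makes $(n-2)!$ prime to $p$ and the $\mu_s$ are units. The truncations $w(l)$ for $l<n-2$ are divisible by $p^k$ by the first paragraph, which is where the hypothesis enters, and where it is needed for the sub-vectors of $\overline m$, hence for all $\overline s\in(Z_r)^{p+1}$. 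The lemma then collapses everything to the top term,
\[
\B{r}{m}\ind1n=w(n-2)\equiv\frac{1}{(n-2)!}\prod_{s=2}^{n-2}\mu_s^{\,n-1-s}\sum_{y=1}^{r-1}y^{\,n-2}\pmod{p^k},
\]
using that $d\mapsto[\mu_1d]_r$ permutes $\{1,\dots,r-1\}$.

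It remains to simplify the right-hand side. Telescoping gives $\prod_{s=2}^{n-2}\mu_s^{\,n-1-s}=m_2^{\,n-3}\prod_{j=3}^{n-1}m_j^{-1}$, so the expression carries a surplus factor $m_2^{\,n-2}$ relative to $\prod_{k=2}^{n-1}m_k^{-1}$. This surplus is harmless: $\sum_{y=1}^{r-1}y^{\,n-2}$ has $p$-adic valuation at least $k$ when $n<p+1$ and exactly $k-1$ when $n=p+1$ (where it is $\equiv B_{p-1}r$), while Fermat gives $m_2^{\,n-2}=m_2^{\,p-1}\equiv1\pmod p$ in the only nonvanishing case $n=p+1$, so $(m_2^{\,n-2}-1)\sum_{y}y^{\,n-2}\in p^k\Z$ throughout. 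Hence $\B{r}{m}\ind1n\equiv\frac{1}{(n-2)!}\prod_{k=2}^{n-1}m_k^{-1}\sum_{y=1}^{r-1}y^{\,n-2}$, and finally the all-ones case $\B{r}{1}\ind1n=\binom{r+n-2}{n-1}$ of Theorem \ref{closedFormula} (where every $\mu_s=1$) calibrates the $m$-independent factor as $\tfrac{1}{(n-2)!}\sum_{y=1}^{r-1}y^{\,n-2}\equiv\binom{r+n-2}{n-1}$, which is the claim. The main obstacle is the third step: reducing the nested sum modulo $p^k$ and fitting it faithfully to the template of Lemma \ref{reduceSums} — in particular keeping the coefficients of $\mathcal Q_l$ $p$-integral near $l=p-1$, which is precisely the delicate Bernoulli bookkeeping that the lemma is engineered to circumvent.
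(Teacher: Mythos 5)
Your route is a genuine reorganisation of the paper's proof rather than a copy of it: the paper peels one entry of $\overline m$ at a time (the vectors $\overline n_j$), applying Lemma \ref{reduceSums} twice per step to extract a single factor $m_{n-j}^{-1}$, whereas you parametrise all arcs at once by exit positions, collapse the nested sum into one polynomial, apply the lemma once, and then calibrate the $m$-independent constant against the all-ones vector. Most of your scaffolding checks out: the arc decomposition, the inequality $x_{s+1}\le[\mu_sx_s]_r$, the leading coefficient $\tfrac1{l!}\prod_{s=2}^l\mu_s^{\,l+1-s}$, the telescoping identity, the Fermat/valuation disposal of the surplus $m_2^{\,n-2}$, and the calibration step are all correct. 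Moreover, the obstacle you flag at the end is not actually an obstacle: building $\mathcal Q_l$ only ever invokes Bernoulli numbers $B_i$ with $i\le n-3\le p-2$, which are $p$-integral by von Staudt--Clausen; the dangerous $B_{p-1}$ would enter only if one Faulhaber-summed the \emph{outermost} variable, which your calibration against $\overline 1$ deliberately avoids. (The residual mismatch that Lemma \ref{reduceSums} demands integer $a(t,l)$ while yours are rationals with denominators prime to $p$ is cured by clearing denominators, exactly as the paper multiplies through by $j!$ and $j!^2$.)

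The genuine defect sits at $l=0$, in the constant terms. Your identity $\B rm\ind1{l+2}=\sum_{x_1=1}^{r-1}\mathcal Q_l([\mu_1x_1]_r)$ drops the arc $H_1$ (the full loop inside the first subgraph), so it is off by $1$, and that $1$ is not negligible modulo $p^k$. Concretely, your own formula gives $a(0,0)=1$, whence $w(0)=\sum_{d=1}^{r-1}1=r-1$, contradicting $\B rm\ind12=r$ and, worse, violating the divisibility $p^k\mid w(0)$ that Lemma \ref{reduceSums} requires (you assert it follows from your first paragraph, but that paragraph controls $\B rm\ind 1{l+2}$, not the sum actually fed to the lemma). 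If instead you absorb the missing $1$ as a constant $-1$ in every $\mathcal Q_l$ (legitimate, since $\sum_{x_1=1}^{r-1}(-1)=1-r\equiv 1\pmod{p^k}$), then $a(0,0)=0$ and the hypothesis $\gcd(a(0,0),p)=1$ fails; either way the lemma as stated does not apply, because its inductive engine needs $p^k\mid\sum_d s(d)$ at the base, while with $s(d)=1$ this sum is $r-1$, a $p$-unit. The repair is short but must be made: after the $-1$ absorption every $\mathcal Q_l$ has zero constant term (Faulhaber polynomials vanish at $0$), so factor one power of $b(d)$ out of each $\mathcal Q_l$ and apply Lemma \ref{reduceSums} with $\tilde s(d)=b(d)$, $\tilde a(t,l)=a(t+1,l+1)$ and $j=n-3$; alternatively bypass the lemma altogether by the unconditional fact that $p^k\mid\sum_{y=1}^{r-1}y^t$ for $1\le t\le p-2$, which kills all sub-leading terms directly. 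With that correction your argument goes through and proves the stated congruence.
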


\begin{proof}
For every vector $\overline m$ we can reduce the problem to considering a vector $\overline m'$ which satisfies $m_1'=m_2'=m_n'=1$ as follows. First, recall that no matter what vector we consider, we can always assume without loss of generality that its first and last entry is 1 since it does not affect any of the sides of the above expression by Lemma \ref{invariantFirstLast}. Second, as in the proof of Corollary \ref{change3} we can multiply $\overline m$ by a $m_2^{-1}$ to get $\overline m' =  m_2^{-1}\cdot \overline m$, which means that $m_2'=1$. Then the left hand side will not change since $\B rm\ind 1n = \B r{m'}\ind 1n$ by Lemma \ref{multiply} and the right hand side will satisfy
\begin{align*}
    \binom{r+n-2}{n-1} \prod_{k=2}^{n-1}m_k^{-1} \equiv \binom{r+n-2}{n-1} \prod_{k=2}^{n-1}bm_k^{-1} \pmod{p^k}
\end{align*}
since for $n<p+1$, $p^k\mid \binom{r+n-2}{n-1}$ and for $n=p+1$, $b^{n-2}\equiv 1\pmod p$ and $p^{k-1}\mid \binom{r+n-2}{n-1}$. Now, assuming $m_1'=m_n'=1$ yields the above. Thus, for the remaining proof we will assume that $m_1=m_2=m_n=1$.
\\

Now, let $\overline n_j$ denote the vector $(m_1, \dots, m_{n-j}, \underbrace{1, \dots, 1}_{j})$ and note that with the above assumption, $\overline n_1 = \overline m$ and $\overline n_{n-2}=\overline 1$.
\noindent Our approach will be to show that for all $1 \leq j < n-1$ we have %TODO: We need a theorem to say that the n-1-st entry is 1.%TODO: We already have n_2 (eller præcis hvordan det virker skal vi lige tjekke)
\begin{equation}\label{goalOfHugeProof} 
	\B{r}{n_j} \langle 1, n\rangle \equiv m_{n-j}^{-1}g_j(m_1, \dots, m_{n-j-1})\pmod{p^k}
\end{equation}
for some integer function $g_j\colon (Z_r)^{n-j-1}\to \Z$ which is independent of $m_{n-j}$ and where $m_{n-j}^{-1}$ is the inverse of $m_{n-j}$ modulo $p^k$. Noting that \eqref{goalOfHugeProof} yields $\B{r}{n_{j+1}}\langle 1, n\rangle\equiv g(j, m_1, \dots, m_{n-j-1})\pmod {p^k}$, we get
\begin{align*}
	\B{r}{ n_{j}}\langle 1, n\rangle\equiv  \B{r}{n_{j+1}}\langle 1, n\rangle m_{n-j}^{-1} \pmod {p^k} ,
\end{align*}
 and applying this together with Theorem \ref{closedFormula} and $m_2=1$ gives us
\begin{align*} 
	\B{r}{m}\langle 1, n\rangle &= \B{r}{n_1}\langle 1, n\rangle\\
	&\equiv  \B{r}{n_2}\langle 1, n\rangle m_{n-1}^{-1} \\
	&\, \, \, \vdots\\
	&\equiv \B{r}{n_{n-2}} \langle 1, n\rangle \prod_{k=3}^{n-1}m_k^{-1}\\
	&= \B{r}{1} \langle 1, n\rangle\prod_{k=3}^{n-1}m_k^{-1}\\
	&=\binom{r+n-2}{n-1} \prod_{k=2}^{n-1}m_k^{-1} \pmod{p^k}.
	\end{align*}
	
Thus, all we need to do is prove that we can indeed write an expression for $\B{r}{n_j} \langle 1, n\rangle$ of the form \eqref{goalOfHugeProof}.

To do so, fix a $j$ with $1 \leq j < n-1$ and consider the graph $\NN{r}{n_j} $. We may write 
\begin{equation} \label{eqformel}
\B{r}{n_j} \langle 1, n \rangle = \sum_{q=0}^{r-1} L_{j}(q) S_j(q) 
\end{equation}
where $ S_j(q)$ denotes the number of paths on $\NN r{n_j}$ from $c_{1,0}$ to $c_{n-j, q}$ that are  subpaths of a legal path from $c_{1, 0}$ to $c_{n, 0}$ and that do not traverse any edges 
 in the $(n-j)$th subgraph, and similarly $L_j(q)$ is the number of paths on $\NN r{n_j}$ from $c_{n-j,q}$ to $c_{n,0}$  that are subpaths of a legal path from $c_{1, 0}$ to $c_{n, 0}$.
\\

We start our analysis by finding a formula for $ L_j(q)$. First, we consider the $(n-j+1)$th subgraph of $\NN r{n_j}$ and count the number of paths from $c_{n-j+1,i}$ to $c_{n,0}$ on $\NN r{n_j}$ that are subpaths of a legal path from $c_{1, 0}$ to $c_{n, 0}$ for each $0\leq i<r$. As in the proof of Theorem one can see choosing such a path as choosing a partition of $[r-i]_r$ into a sum of $j-1$ non-negative integers since $m_{n-j+1}=m_{n-j+2}=\dots=m_n=1$. Thus, the number of such paths is equal to $ \binom{[r-i]_r+j-1}{j-1}$.

Second, there are three cases to consider. When $i,q > 0$ there is exactly one path from $c_{n-j,q} $ to $c_{n-j+1,i}$ not traversing any edges in the $n-j+1$th subgraph that is a subpath of a legal path from $c_{1, 0}$ to $c_{n, 0}$ if and only if $ [m_{n-j}^{-1}q]_r \leq [m_{n-j}^{-1}i]_r $. Otherwise there are none. This is clear since such a path would be of the form
\begin{align*}
    c_{n-j, q}\to c_{n-j, [q+m_{n-j}]_r}\to \dots \to c_{n-j, i}\to c_{n-j+1, i}
\end{align*}
and zero is not a member of $\{q, q+m_{n-j}, \dots, i\}$ if and only if $ [m_{n-j}^{-1}q]_r \leq [m_{n-j}^{-1}i]_r $.
For $i=0$ there is exactly one such subpath for every $q$ and for $q= 0$ there is exactly one such subpath if and only if $i=0$.
Thus, for for $q > 0$ 
\begin{align*} 
L_j(q) &= \sum_{i=0}^{r-1} (1_{ \{ [m_{n-j}^{-1}q]_r \leq [m_{n-j}^{-1}i]_r \}}+ 1_{ \{i =0 \}})  \binom{[r-i]_r +j-1}{j-1} \\ =& \sum_{i=[m_{n-j}^{-1}q]_r}^{r} \binom{[r - i m_{n-j}]_r + j -1 }{j-1}
\end{align*}
Where $1_{\text{boolean}}$ is an  indicator function assuming the value 1 if is true and 0 otherwise, and where we changed $i =0$ terms into $i=r$ terms.
Introducing the new variable $\sigma = r-i$ we rewrite the sum as
\begin{equation}  \label{sumOfLk}
L_j(q) = \sum_{\sigma=0}^{[-m_{n-j}^{-1}q]_r} \binom{[\sigma m_{n-j}]_r + j -1 }{j-1} 
\end{equation}
Since evidently $L_j(0) = 1$, this formula holds even for $q = 0$ and thus for all $0\leq q < r$. \\

For $j=1$, \eqref{sumOfLk} yields $L_j(q) = [-m_{n-1}^{-1}q]_r$ and inserting in \eqref{eqformel} yields
\begin{align*}
    \B r{n_1} = \sum_{q=0}^{r-1}[-m_{n-1}^{-1}q]_rS_j(q) \equiv -m_{n-1}^{-1}\sum_{q=0}^{r-1} S_j(q)q \pmod{p^k}
\end{align*}
Since $S_j(q)q$ only depends on $m_1, m_2, \dots, m_{n-2}$, it follows that we can write $\B r{n_1}$ of the form \eqref{goalOfHugeProof}.

So let us consider the case when $j>1$. Inserting the expression \eqref{sumOfLk} into \eqref{eqformel} and substituting $d=r-q$ and noting that the $d=0$ is equal to the $d=r$ term yields
\begin{align*}
(j-1)! \B{r}{m} \langle 1, n \rangle  = \sum_{d=0}^{r-1} \sum_{\sigma=0}^{[m_{n-j}^{-1} d]_r} S_j([r-d]_r)\prod_{i=1}^{j-1}([\sigma m_{n-j}]_r + i).
\end{align*}
Expanding the product and introducing $s(d) = S_j([r-d]_r) $ we get the sum
\begin{equation*}\label{sum_formula}
(j-1)! \B{r}{n_j} \langle 1, n \rangle  = \sum_{d=0}^{r-1} \sum_{\sigma=0}^{[m_{n-j}^{-1}d]_r}  \sum_{t=0}^{j-1} a(t, j-1) ([\sigma m_{n-j}]_r)^t s(d)
\end{equation*}
for an integer function $a\colon \N_0\times \N_0 \to \Z$, where $a(j-1, j-1)=1$. Now, note that by the same reasoning we must also have for every $0\leq l < j-1$ that
\begin{align*}
w(l) & :=l!\B{r}{n_j} \langle 1, n-j+l+1 \rangle  \\
       &= \sum_{d=0}^{r-1} \sum_{\sigma=0}^{[m_{n-j}^{-1} d]_r} s(d) \sum_{t=0}^{l} a(t, l) ( [\sigma m_{n-j}]_r)^t
\end{align*}
where $a(l, l)=1$. By assumption,  $p^k$ divides $ \B r{n_j}\ind 1{n-j+l+1}$ for every $l<j-1$. Hence, $p^k\mid w(l)$ for $0\leq l<j-1$. Applying Lemma \ref{reduceSums} with the functions $s, a, b(\sigma):= [\sigma m_{n-j}]_r$, prime $p$, and exponent $t$, we get 
\begin{align*}
	(j-1)! \B{r}{n_j} \langle 1, n \rangle  & = w(j-1)\\
	&\equiv \sum_{d=0}^{r-1} \sum_{\sigma=0}^{[m_{n-j}^{-1} d]_r} a(j-1,j-1) s(d) ([\sigma m_{n-j}]_r)^{j-1}   \\
	&\equiv m_{n-j}^{j-1}\sum_{d=0}^{r-1} \sum_{\sigma=0}^{[m_{n-j}^{-1} d]_r } 
	 s(d)\sigma^{j-1} \pmod{p^k}. \numberthis \label{toInsertIn}
 \end{align*}
 By Faulhaber's formula \cite{jacobi}  in the convention $B_1 = \frac{1}{2}$, we can write
 \begin{equation*}
 	 \sum_{\sigma=0}^{[m_{n-j}^{-1} d]_r}  \sigma^{j-1} =  \frac{1}{j} \sum_{t=0}^{j-1} \binom{j}{j-t-1}B_{j-t-1}([m_{n-j}^{-1}d]_r)^{t+1},
 \end{equation*}
where $B_n$ is the $n$th Bernoulli number. Inserting in \eqref{toInsertIn}, noting that $p^k\mid r$ and multiplying both sides by $j$, we find that

\begin{align*}
	j! \B{r}{n_j} \langle 1, n \rangle  &\equiv  m_{n-j}^{j-2} \sum_{d=0}^{r-1}  s(d) d \sum_{t=0}^{j-1}   \binom{j}{j-t-1}B_{j-t-1} (m_{n-j}^{-1}d)^t  \pmod{p^k}
 \end{align*}
As it is a well-known fact that $ j! B_l, l<j,$ is an integer, we multiply by   $j! (m_{n-j}^{j-2})^{-1}$
 to ensure that each factor of each term is an integer 
\begin{align*}
	(m_{n-j}^{j-2})^{-1} j!^2 \B{r}{n_j}  \langle 1, n \rangle  &\equiv \sum_{d=0}^{r-1} s(d)d \sum_{t=0}^{j-1}  \binom{j}{j-t-1} j! B_{j-t-1} (m_{n-j}^{-1}d)^t  \pmod{p^k}.
 \end{align*}
To apply Lemma \ref{reduceSums} again we write 
\begin{align*}
    \tilde s(d)   & :=s(d)d, \\
    \tilde a(t,l) &:= \binom{l+1}{l-t} (l+1)! B_{l-t},\\
    \tilde b(d) &:= [m_{n-j}^{-1}d]_r,
\end{align*}
and considering the vectors $\overline{v_{l+1}} = (m_1, m_2, \dots, m_{n-j}, \underbrace{ 1, \dots, 1}_{l+1})$ one finds: 
\vspace{-0.3 cm}
\begin{align*}
    \tilde w(l) &:= \sum_{d=0}^{r-1} \tilde s(d) \sum_{t=0}^l \tilde a(t, l)\tilde b(d)^t \\ &\equiv (m_{n-j}^{l-1})^{-1} (l+1)!^2 \B r{v_{l+1}} \langle 1, n-j+l+1 \rangle\pmod {p^k}.\\ &\equiv (m_{n-j}^{l-1})^{-1} (l+1)!^2 \B r{n_j} \langle 1, n-j+l+1 \rangle\pmod {p^k}.
\end{align*}
 Now, $  \tilde a(l,l) =  \binom{l+1}{0} (l+1)! B_{0} = (l+1)!$ so for $ 0\leq l < j-1 < n-1\leq p$ we have $ \gcd( \tilde a(l,l), p)  = 1$. Further, by assumption $p^k\mid \B r{n_j}\ind 1{n-j+l+1}$ for $0\leq l<j-1$, so $p^k\mid \tilde w(l)$ for $0\leq l<j-1$. Thus, Lemma \ref{reduceSums} yields
 \begin{align*}
(m_{n-j}^{j-2})^{-1} j!^2 \B{r}{n_j} \langle 1, n \rangle  &=\tilde w(j-1)\\
	&\equiv  \sum_{q=0}^{r-1}s(d)d \tilde a(j-1,j-1) (m_{n-j}^{-1}d)^{j-1} \\ 
	& \equiv m_{n-j}^{1-j} \sum_{q=0}^{r-1} j! s(d) d^{j} \pmod{p^k} 
 \end{align*}
This means that 
  \begin{align*}
\B{r}{n_j} \langle 1, n \rangle  &\equiv  m_{n-j}^{-1} \sum_{q=0}^{r-1}  j!^{-1}s(d) d^{j} \pmod{p^k} ,
 \end{align*}
where we note that $j!^{-1}$ is well-defined because $j<n-1\leq p$ so $\gcd(j!, p)=1$. Since $s(d)$ only depends on $m_{1}, \dots, m_{n-j-1}$, it is clear that we can find $g_j$ satisfying \eqref{goalOfHugeProof} and we are done.
\end{proof}

\noindent Having proved the above theorem we can apply it to find $\widetilde \varphi(r) $ whenever $4 \nmid r$.  
We first use the theorem to prove the following lemma, which will give the first half of the proof.

\begin{lemma}\label{divides}
Let $r> 2$, $p$ be an odd prime, and $p^k\mid\mid r$ for some $k\in \N$. For every vector $\overline m$ with entries in $Z_r$ and every pair $a, b$ satisfying $0<b-a< p$ we have
$$
p^{k}\mid \B{r}{m} \langle a, b\rangle.
$$
\end{lemma}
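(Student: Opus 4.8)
### Proof Proposal for Lemma~\ref{divides}

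The plan is to prove the statement by strong induction on the gap $d = b - a$, using Theorem~\ref{main} as the engine for the inductive step. Since $\B{r}{m}\langle a, b\rangle$ depends only on the subvector $(m_a, \dots, m_b)$, and since by Lemma~\ref{invariantFirstLast} and Corollary~\ref{change3} the value is unchanged under shifting indices and normalising, I would first reduce to the case $a = 1$, so that the target becomes $p^k \mid \B{r}{m}\langle 1, b\rangle$ for every $\overline m$ and every $b$ with $1 < b \le p$ (i.e.\ $0 < b - 1 < p$, equivalently $b \le p$). This puts the statement in exactly the form that Theorem~\ref{main} addresses with $n = b$ playing the role of the endpoint index.

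The base case is the gap $d = 1$: by Corollary~\ref{smallcase} we have $\B{r}{m}\langle a, a+1\rangle = r$, and since $p^k \mid\mid r$ we certainly have $p^k \mid r$, so the claim holds. For the inductive step, fix $n$ with $1 < n \le p$ and assume $p^k \mid \B{r}{s}\langle 1, a\rangle$ for every $\overline s \in (Z_r)^{p+1}$ and every $a < n$ — this is precisely the hypothesis that Theorem~\ref{main} requires (together with $p^k \mid r$, which holds since $p^k \mid\mid r$, and $n \le p+1$, which holds since $n \le p$). Applying Theorem~\ref{main} gives
\begin{equation*}
\B{r}{m}\langle 1, n\rangle \equiv \binom{r+n-2}{n-1} \prod_{k=2}^{n-1} m_k^{-1} \pmod{p^k}.
\end{equation*}
It therefore suffices to show $p^k \mid \binom{r+n-2}{n-1}$ whenever $2 \le n \le p$.

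The key arithmetic fact is that for $1 \le n-1 \le p-1$, the binomial coefficient $\binom{r+n-2}{n-1}$ is divisible by $p^k$ whenever $p^k \mid r$. To see this, write
\begin{equation*}
\binom{r+n-2}{n-1} = \frac{(r+n-2)(r+n-3)\cdots(r+1)\, r}{(n-1)!},
\end{equation*}
so the numerator carries an explicit factor of $r$, hence at least $p^k$. Since $n-1 \le p-1$, the denominator $(n-1)!$ is coprime to $p$, so it cannot cancel any factor of $p$; thus $v_p\!\left(\binom{r+n-2}{n-1}\right) \ge v_p(r) \ge k$, giving $p^k \mid \binom{r+n-2}{n-1}$ as required. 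Combining this with the congruence above yields $p^k \mid \B{r}{m}\langle 1, n\rangle$, completing the induction.

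The main obstacle is bookkeeping rather than any deep difficulty: one must verify that the reduction to $a=1$ is legitimate (that the inductive hypothesis really supplies Theorem~\ref{main}'s hypothesis for \emph{all} vectors $\overline s \in (Z_r)^{p+1}$, not just for $\overline m$ — this follows because the induction is quantified over all vectors at each smaller gap) and that the index constraint $n \le p$ keeps us inside the range $n \le p+1$ where Theorem~\ref{main} applies while simultaneously guaranteeing $\gcd((n-1)!, p) = 1$. The factor $\prod_{k=2}^{n-1} m_k^{-1}$ in the conclusion of Theorem~\ref{main} is a unit modulo $p^k$ (each $m_k \in Z_r$ is invertible), so it is harmless and does not affect divisibility by $p^k$.
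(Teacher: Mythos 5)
Your proof is correct and takes essentially the same route as the paper's: induction on the gap $b-a$ (base case $\B{r}{m}\langle a,a+1\rangle = r$), with Theorem~\ref{main} applied in the inductive step — the induction hypothesis, being quantified over all vectors, supplies exactly the theorem's hypothesis — and the argument closed by noting $p^k \mid \binom{r+n-2}{n-1}$ since the numerator contains the factor $r$ while $(n-1)!$ is coprime to $p$. The paper phrases the reduction as ``shifting the indices $\langle 1,n\rangle$ to $\langle a,b\rangle$'' rather than normalising to $a=1$, but this is the same observation.
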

\begin{proof}
We proceed by induction on the difference $n=b-a$. \\
When $n=1$ we have $ p^k\mid \B{r}{m} \langle a, b\rangle=r$. Now, suppose that $p^k\mid \B{r}{m}\langle a', b'\rangle$ for every $a', b'$ satisfying $0<b'-a'<n$ for some $n$ with $1<n< p$ and let $b-a=n$. Then we can apply Theorem \ref{main} with the indices $\langle 1, n\rangle$ shifted to $\langle a, b\rangle$ to get 
\begin{align*}
\B{r}{m}\langle a, b\rangle
&\equiv \binom{r-1+(b-a)}{b-a}\prod_{k=a+1}^{b-1}m_k^{-1}\\ 
&\equiv\frac{r \cdots  (r-1+(b-a))}{(b-a)!}\prod_{k=a+1}^{b-1}m_k^{-1}\\
&\equiv 0 \pmod{p^k},
\end{align*}
where the last equivalence follows since $p^k\mid \frac{r \cdots (r-1+(b-a))}{(b-a)!}$ because $b-a<p$ and $r$ divides the the numerator.
\end{proof}

\noindent Now, using the previous lemma and Theorem \ref{main} we obtain an upper bound on $\widetilde \varphi$ simply by pointing to two graphs that are not equivalent. In Theorem \ref{classesInequality} below we will establish a lower bound for the number of equivalence classes from which the result will follow. But for clarity we now give a short independent proof.

\begin{theorem}\label{phiUpperBound}
    Let $r>2$ be given and let $p$ be the smallest odd prime dividing $r$. Then $\widetilde\varphi(r)\leq p+1$.
\end{theorem}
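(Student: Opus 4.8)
The plan is to show $\varphi_r(p+1)>1$ by exhibiting two length-$(p+1)$ vectors whose $\mathsf B$-matrices are not $\sim$-equivalent; since $\widetilde\varphi(r)$ is the least $n$ with $\varphi_r(n)>1$, this gives $\widetilde\varphi(r)\le p+1$. Throughout I fix $k$ with $p^k\mid\mid r$. The invariant that will separate the two matrices is the corner entry $\langle 1,p+1\rangle$ read modulo $p^k$, and the tool that makes it computable is Theorem \ref{main}.

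First I would check that Theorem \ref{main} applies with $n=p+1$. By Lemma \ref{divides}, for any $\overline s\in(Z_r)^{p+1}$ and any $a$ with $0<a-1<p$ we have $p^k\mid\B rs\ind 1a$, which is exactly the divisibility hypothesis of Theorem \ref{main} (the entries $\B rs\ind 1a$ for $2\le a\le p$). The theorem then yields, for every $\overline m\in(Z_r)^{p+1}$,
\[
\B{r}{m}\ind{1}{p+1}\equiv\binom{r+p-1}{p}\prod_{t=2}^{p}m_t^{-1}\pmod{p^k}.
\]

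The heart of the matter is the $p$-adic size of $\binom{r+p-1}{p}=\frac{r(r+1)\cdots(r+p-1)}{p!}$. The numerator is a product of $p$ consecutive integers among which exactly one is divisible by $p$, namely $r$ itself, since $r+i\equiv i\not\equiv0\pmod p$ for $1\le i\le p-1$; hence the numerator has $p$-adic valuation exactly $k$. Dividing by $p!$, which has valuation $1$, gives $\binom{r+p-1}{p}=p^{k-1}u$ with $\gcd(u,p)=1$. Consequently $\B{r}{m}\ind{1}{p+1}\equiv p^{k-1}u\prod_{t=2}^{p}m_t^{-1}\pmod{p^k}$, so modulo $p^k$ this entry is determined by $\prod_{t=2}^{p}m_t^{-1}$ reduced modulo $p$. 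I would then take $\overline 1=(1,\dots,1)$ and $\overline m'=(1,a,1,\dots,1)$ with $a\in Z_r$ reducing to a non-identity element of $Z_p$ (possible since $(\Z/r\Z)^*\to(\Z/p\Z)^*$ is onto and $p\ge3$; e.g.\ pick $a\equiv2\pmod{p^k}$ and $a\equiv1\pmod{r/p^k}$ by CRT). Their corner entries are $\equiv p^{k-1}u$ and $\equiv p^{k-1}ua^{-1}$, which are distinct modulo $p^k$ because $p^{k-1}u(1-a^{-1})\not\equiv0\pmod{p^k}$, using $\gcd(u,p)=1$ and $a\not\equiv1\pmod p$.

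Finally, by Lemma \ref{divides} every entry of $\B{r}{1}-I$ and $\Bm{r}{m}-I$ other than the corner $\langle1,p+1\rangle$ is divisible by $p^k$ (in a $(p+1)\times(p+1)$ matrix the corner is the only upper-triangular entry at distance $p$, and all smaller distances are covered by the lemma), and both matrices are upper triangular with $1$'s on the diagonal by Corollary \ref{smallcase}. Lemma \ref{notsim}, applied with modulus $p^k$, then gives $\B{r}{1}\not\sim\Bm{r}{m}$, so $\varphi_r(p+1)>1$ as desired. I expect the only delicate step to be the valuation count: one must confirm that the corner entry is a genuinely nonzero multiple of $p^{k-1}$ in $\Z/p^k\Z$, for it is precisely this that keeps the factor $\prod_{t=2}^p m_t^{-1}$ visible modulo $p$. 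The hypothesis that $p$ is the smallest odd prime divisor of $r$ is not needed for the mechanics—the same construction works for any odd prime dividing $r$—but choosing the smallest is what makes $p+1$ the sharpest upper bound.
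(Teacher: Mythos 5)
Your proposal is correct and follows essentially the same route as the paper: use Lemma \ref{divides} to verify the hypothesis of Theorem \ref{main}, compute the corner entry $\langle 1,p+1\rangle$ modulo $p^k$ for two vectors differing only in the second coordinate, and separate them with Lemma \ref{notsim}. The only cosmetic differences are that the paper takes $\overline b=(1,-1,1,\dots,1)$ and expresses the corner entries as $\pm\frac{r}{p}\pmod{p^k}$, whereas you choose $a\equiv 2\pmod{p^k}$ via CRT and phrase the key valuation fact as $\binom{r+p-1}{p}=p^{k-1}u$ with $\gcd(u,p)=1$ --- the same computation in different clothing.
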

\begin{proof}
    Let $k$ be such that $p^k\mid\mid r$, set 
$$\overline a=(\underbrace{1, \dots 1}_{p+1}) \text{ and } \overline b=(1, -1, \underbrace{1, \dots 1}_{p-1}),$$
and consider the matrices $A=\B{r}{a}, B=\B{r}{b}$. Then by Lemma \ref{divides} we have $p^k\mid A\langle a, b \rangle$ and $p^k\mid B\langle a, b \rangle $ for $a<b$ and $\ind ab\neq \ind 1{p+1}$. Using Theorem \ref{main} twice and noting that $(r+1)\cdots (r+p-1)\equiv (p-1)!\pmod {p^k}$, we get
\begin{align*}
A\langle 1, p+1\rangle &= \binom{r+p-1}{p} \prod_{k=2}^{p}a_k^{-1}\\
                       &= \frac{r}{p} \pmod{p^k},
\end{align*}
and
 \begin{align*}
B\langle 1, p+1\rangle &\equiv  \binom{r+p-1}{p} \prod_{k=2}^{p}b_k^{-1}\\
                       &\equiv -\frac{r}{p} \pmod{p^k},
\end{align*}
since $b_2=-1$.
It follows that $p^k$ divides every entry of $A-I$ and $B-I$ except for the entry $\langle 1, p+1\rangle$. Applying Lemma \ref{notsim} we get $\B ra\not\sim \B rb$ implying $\varphi_r(p+1)>1$ and the conclusion follows.
\end{proof}

\noindent Now, using the theorem we determine $\widetilde \varphi(r)$ whenever $4 \nmid r$.

\begin{theorem}\label{varphiisp1}
Let $r>2$ be given such that $4\nmid r$  and let $p$ be the smallest odd prime dividing $r$. Then $\widetilde \varphi(r) = p+1$. 
\end{theorem}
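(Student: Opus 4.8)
The plan is to combine the upper bound already established with a matching lower bound. Theorem \ref{phiUpperBound} gives $\widetilde\varphi(r)\leq p+1$, so it remains to prove $\widetilde\varphi(r)\geq p+1$; equivalently, that $\varphi_r(n)=1$ for every $n\leq p$. Since $4\nmid r$, we may write $r=2^t s$ with $t\in\{0,1\}$ and $s$ odd, which is exactly the hypothesis of Lemma \ref{eqMatrices}. The strategy is to show that for each $n\leq p$ every matrix $\B{r}{m}$ with $\overline m\in(Z_r)^n$ satisfies the hypotheses of Lemma \ref{eqMatrices}; since that lemma then declares any two such matrices $\sim$-equivalent and $\sim$ is an equivalence relation, the quotient $S_{r,n}/\!\sim$ collapses to a single class.

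First I would record the structural facts. Every $\B{r}{m}$ is upper triangular with $1$'s on the diagonal, because legal paths can only move from the $i$th subgraph to the $j$th subgraph with $i\leq j$, and Corollary \ref{smallcase} supplies the required values $r$ on the diagonal $\langle i,i+1\rangle$ and $\frac{r(r+1)}{2}$ on the diagonal $\langle i,i+2\rangle$. Thus the only remaining hypothesis of Lemma \ref{eqMatrices} to verify is that $s$ divides every entry of $\B{r}{m}-I$, that is, every off-diagonal entry $\B{r}{m}\langle a,b\rangle$ with $a<b$.

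The divisibility is where Lemma \ref{divides} and the choice of $n$ enter. Fix an off-diagonal entry $\langle a,b\rangle$; since the matrix is $n\times n$ with $n\leq p$ we have $0<b-a\leq n-1\leq p-1<p$. Let $p'$ be any odd prime dividing $r$ and let $p'^{k'}\mid\mid r$. Because $p$ is the smallest odd prime divisor of $r$ we have $p'\geq p$, so $0<b-a<p\leq p'$, and Lemma \ref{divides} yields $p'^{k'}\mid\B{r}{m}\langle a,b\rangle$. Running this over all odd prime divisors of $r$, and using that the corresponding prime powers are pairwise coprime and multiply to $s$, we conclude $s\mid\B{r}{m}\langle a,b\rangle$, as needed.

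Having verified all hypotheses of Lemma \ref{eqMatrices} for every $\B{r}{m}$ with $\overline m\in(Z_r)^n$ and $n\leq p$, the lemma gives $\B{r}{m}\sim\Bm{r}{m}$ for all choices of $\overline m,\overline m'$, so $\varphi_r(n)=1$ and hence $\widetilde\varphi(r)\geq p+1$. Together with Theorem \ref{phiUpperBound} this yields $\widetilde\varphi(r)=p+1$. I expect the one genuinely load-bearing step to be the divisibility by $s$: it is the place where the hypothesis $n\leq p$ is used to force $b-a<p'$ for every odd prime divisor $p'$, and where $4\nmid r$ is used to put $r$ into the form required by Lemma \ref{eqMatrices}. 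Everything else is bookkeeping with results already in hand.
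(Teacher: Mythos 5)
Your proposal is correct and follows essentially the same route as the paper's proof, which likewise combines Lemma \ref{eqMatrices} and Lemma \ref{divides} to get $\varphi_r(n)=1$ for $n\leq p$ and then invokes Theorem \ref{phiUpperBound}. The paper states this very tersely; your write-up merely makes explicit the verification of the hypotheses of Lemma \ref{eqMatrices} and the passage from the prime-power divisibility of Lemma \ref{divides} to divisibility by $s$.
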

\begin{proof}
It follows from Lemma \ref{eqMatrices} and Lemma \ref{divides} that for every $n\leq p$ and every $\overline m \in (Z_r)^n$ we have $ \B{r}{m}\sim \B{r}{1}$, so $\varphi_r(n)=1$ for $n\leq p$. Thus, $\varphi(r)>p$. The conclusion now follows from Theorem \ref{phiUpperBound}. \end{proof}

The remaining part of this section deals with the number of equivalence classes, $\varphi_r(n)$.

\begin{notation}
Let $A=(a_{ij})$ be a matrix. Then we denote by $A[c,d]$ the partial square matrix
$$
\begin{pmatrix}
    a_{cc}& \cdots &a_{cd}\\
    \vdots & &\vdots\\
    a_{dc}&\cdots &a_{dd}
\end{pmatrix}.
$$
\end{notation}

\begin{lemma}\label{yessim}
Let $A, B$ be upper triangular matrices with $A\sim B$. Then $A[b,b+c]\sim B[b, b+c]$ for $b, c \in \N$ whenever the partial matrices are well-defined.
\end{lemma}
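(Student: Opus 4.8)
The plan is to unwind the definition of $\sim$ into a single matrix equation, pass to block form relative to the index partition determined by $[b,b+c]$, and read off the desired relation from the central diagonal block. Since $A\sim B$ means $A-I\cong B-I$, there exist upper triangular matrices $X,Y$ with $1$'s on the diagonal such that
\[
X(A-I)=(B-I)Y.
\]
I would take the obvious candidates $X':=X[b,b+c]$ and $Y':=Y[b,b+c]$ and verify that $X'(A[b,b+c]-I)=(B[b,b+c]-I)Y'$, which is exactly what is needed since a contiguous diagonal block of an upper triangular matrix with unit diagonal is again upper triangular with unit diagonal.

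First I would partition the index set $\{1,\dots,n\}$ into three consecutive blocks $I_1=\{1,\dots,b-1\}$, $I_2=\{b,\dots,b+c\}$, and $I_3=\{b+c+1,\dots,n\}$, and write all four matrices $X$, $Y$, $A-I$, $B-I$ in the corresponding $3\times 3$ block form. Observe that $A[b,b+c]-I$ is precisely the $(I_2,I_2)$ block of $A-I$, and likewise for $B$; similarly $X[b,b+c]$ and $Y[b,b+c]$ are the $(I_2,I_2)$ blocks of $X$ and $Y$. The strategy is then simply to extract the $(I_2,I_2)$ block of both sides of $X(A-I)=(B-I)Y$.

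The key computation is that the cross terms drop out. On the left, the $(I_2,I_2)$ block of $X(A-I)$ is $\sum_{J} X_{I_2,J}(A-I)_{J,I_2}$ over $J\in\{I_1,I_2,I_3\}$; the $J=I_1$ term vanishes because $X_{I_2,I_1}=0$ (row indices in $I_2$ exceed column indices in $I_1$, and $X$ is upper triangular), and the $J=I_3$ term vanishes because $(A-I)_{I_3,I_2}=0$ (again a strictly-below-diagonal block of an upper triangular matrix). Hence only $X_{I_2,I_2}(A-I)_{I_2,I_2}$ survives. Symmetrically, the $(I_2,I_2)$ block of $(B-I)Y$ reduces to $(B-I)_{I_2,I_2}Y_{I_2,I_2}$, using that $(B-I)_{I_2,I_1}=0$ and $Y_{I_3,I_2}=0$. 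Equating the two yields $X[b,b+c]\,(A[b,b+c]-I)=(B[b,b+c]-I)\,Y[b,b+c]$, and since $X[b,b+c]$ and $Y[b,b+c]$ are upper triangular with unit diagonal, this is the assertion $A[b,b+c]-I\cong B[b,b+c]-I$, i.e.\ $A[b,b+c]\sim B[b,b+c]$.

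There is no substantial obstacle here; the only point requiring care is the bookkeeping of which off-diagonal blocks vanish, and this is entirely forced by the upper-triangularity of $X$, $Y$, $A-I$, and $B-I$ together with the fact that $I_1,I_2,I_3$ are consecutive intervals (so that ``row index in a later block than the column index'' always lands strictly below the diagonal). I would state the block-vanishing facts explicitly once and then conclude.
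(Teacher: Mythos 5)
Your proof is correct, but it takes a different route than the paper's. The paper argues via the pivot characterization of $\cong$ stated after the definition (a multiple of row $k$ may be added to row $l$ only if $k>l$, a multiple of column $k$ to column $l$ only if $k<l$): it observes that every pivot in a sequence transforming $A-I$ into $B-I$ either restricts to a legal pivot inside the window $[b,b+c]$ or leaves the window untouched, the latter because a row entering the window from below, or a column entering from the left, meets the window only in entries lying strictly below the diagonal, which vanish. Your argument instead works directly from the defining equation $X(A-I)=(B-I)Y$ and extracts the $(I_2,I_2)$ block, using upper triangularity of $X$, $Y$, $A-I$, $B-I$ to kill the cross terms. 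The two proofs rest on the same structural fact about unit upper triangular matrices, but yours is arguably more self-contained: it needs neither the (unproved) equivalence between the matrix-equation definition and the pivot description, nor the implicit point that upper triangularity is preserved along the pivot sequence, which the paper's proof tacitly uses when it asserts that out-of-window pivots act trivially on the window. The paper's route, in exchange, is shorter in the pivot language it reuses elsewhere (e.g.\ in Theorem \ref{varphiequals6}).
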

\begin{proof}
By the definition of $\sim$-equivalence, we have $A\sim B$ if and only if $A-I$ can be transformed into $B-I$ by pivots where a row can only be added to a row above it and a column can only be added to a column on its right. Noting that any such series of pivots on $A$ will act on the submatrix $(A-I)[b, b+c]$ as though they were simply pivots carried out on $(A-I)[b, b+c]$ as an independent matrix, it follows that $(A-I)[b, b+c] = A[b, b+c]-I$ can be transformed into $B[b, b+c]-I$ with pivots as described in our definition and the result follows.
\end{proof}

We introduce a necessary condition for two vectors $\overline m$ and $\overline n$ to have graphs with $\sim$-equivalent matrices.

\begin{theorem} \label{necessary}
Let $r>2$ have prime factorisation $r=2^j p_1^{\alpha_1}\cdots p_k^{\alpha_k}, j\in \N_0$ for distinct odd primes $p_i$. Further, let $\overline m, \overline m' \in (Z_r)^n$ be given such that $\B{r}{m}\sim \Bm{r}{m} $. Then for every $i$ with $1\leq i\leq k$ and every $t$ with $1\leq t\leq n-p_i$ we have
$$
\prod_{l=t+1}^{t+p_i-1}m_l \equiv \prod_{l=t+1}^{t+p_i-1}m'_l \pmod{p_i}.
$$
\end{theorem}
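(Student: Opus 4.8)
The plan is to fix a single odd prime $p=p_i$ dividing $r$, with $p^{\alpha}\mid\mid r$ (so $\alpha=\alpha_i$), and a single window index $t$ with $1\le t\le n-p$, and to reduce the whole statement to a $\sim$-invariant sitting in one $(p+1)\times(p+1)$ corner of the matrices. By Lemma \ref{yessim} the hypothesis $\B{r}{m}\sim\Bm{r}{m}$ descends to the partial matrices $\B{r}{m}[t,t+p]$ and $\Bm{r}{m}[t,t+p]$, which are $(p+1)\times(p+1)$, upper triangular, and have $1$'s on the diagonal. The goal is to show that the top-right corner entries $\B{r}{m}\ind{t}{t+p}$ and $\Bm{r}{m}\ind{t}{t+p}$ are forced to be congruent modulo $p^{\alpha}$, and then to read off the claimed product congruence from the explicit formula for these entries.

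For the first step I would invoke Lemma \ref{divides}: every off-diagonal entry $\ind{a}{b}$ of either partial matrix other than the corner satisfies $0<b-a<p$ and is therefore divisible by $p^{\alpha}$. Hence both partial matrices satisfy the hypotheses of Lemma \ref{notsim} with $k=p^{\alpha}$ at every entry except the corner, and since they are $\sim$-equivalent, the contrapositive of Lemma \ref{notsim} gives
\[
\B{r}{m}\ind{t}{t+p}\equiv \Bm{r}{m}\ind{t}{t+p}\pmod{p^{\alpha}}.
\]

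It remains to evaluate the corner entries. Applying Theorem \ref{main} with $n=p+1$ and exponent $\alpha$, shifted from $\ind{1}{p+1}$ to $\ind{t}{t+p}$ exactly as in the proof of Lemma \ref{divides} (whose divisibility hypotheses are supplied by Lemma \ref{divides} itself), gives
\[
\B{r}{m}\ind{t}{t+p}\equiv \binom{r+p-1}{p}\prod_{l=t+1}^{t+p-1}m_l^{-1}\pmod{p^{\alpha}},
\]
and likewise for $\overline m'$. I would then simplify the binomial coefficient by writing $\binom{r+p-1}{p}=\tfrac{r}{p}\binom{r+p-1}{p-1}$ and using $r\equiv 0\pmod{p^{\alpha}}$ to get $(r+1)\cdots(r+p-1)\equiv (p-1)!\pmod{p^{\alpha}}$, whence $\binom{r+p-1}{p-1}\equiv 1\pmod{p^{\alpha}}$. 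Thus each corner entry is congruent to $\tfrac{r}{p}\prod_{l}m_l^{-1}$ modulo $p^{\alpha}$, and combining with the displayed congruence yields $\tfrac{r}{p}\prod_{l}m_l^{-1}\equiv \tfrac{r}{p}\prod_{l}(m'_l)^{-1}\pmod{p^{\alpha}}$.

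The final — and only genuinely delicate — step is the descent from $p^{\alpha}$ back to $p$. Here I would write $\tfrac{r}{p}=p^{\alpha-1}w$ with $\gcd(w,p)=1$; cancelling $p^{\alpha-1}$ from both sides leaves $w\prod_{l}m_l^{-1}\equiv w\prod_{l}(m'_l)^{-1}\pmod{p}$, and cancelling the unit $w$ and then inverting (legitimate since each $m_l,m'_l$ is a unit mod $p$) gives exactly $\prod_{l=t+1}^{t+p-1}m_l\equiv\prod_{l=t+1}^{t+p-1}m'_l\pmod{p}$. The main obstacle is precisely this interplay with higher prime powers: when $\alpha>1$ the corner entry is $\equiv 0\pmod{p}$ for \emph{both} vectors, so reducing modulo $p$ from the outset destroys all information. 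One must instead work modulo the exact power $p^{\alpha}$ throughout and cancel the factor $\tfrac{r}{p}$ only at the very end to recover the desired congruence mod $p$.
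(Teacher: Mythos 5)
Your proposal is correct and follows essentially the same route as the paper: restrict to the $(p+1)\times(p+1)$ corner via Lemma \ref{yessim}, use Lemma \ref{divides} for divisibility of the off-corner entries, evaluate the corner entry via Theorem \ref{main}, and exploit $p^{\alpha-1}\mid\mid\binom{r+p-1}{p}$ to pass between mod $p^{\alpha}$ and mod $p$. The only difference is presentational --- the paper argues by contradiction through Lemma \ref{notsim} while you use its contrapositive directly --- and your cancellation of $p^{\alpha-1}$ and the unit $w$ is exactly the number-theoretic observation the paper states as ``$a\not\equiv b\pmod p$ and $p^{\alpha-1}\mid\mid c$ imply $ac\not\equiv bc\pmod{p^{\alpha}}$''.
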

\begin{proof}
Assume for contradiction that for some $i, t$ we have 
$$
\prod_{l=t+1}^{t+p_i-1}m_l \not\equiv \prod_{l=t+1}^{t+p_i-1}m'_l \pmod{p_i}
$$
and consider the matrices $A=\B{r}{m}[t, t+p_i][t, t+p_i]$ and $B=\Bm{r}{m}[t, t+p_i][t, t+p_i]$. By Lemma \ref{yessim} we must have $A\sim B$ and by Lemma \ref{divides}, $p_i^{\alpha_i}$ divides every entry of $A-I$ and $B-I$ except the entry $\ind 1{p_i}$. For the entry $\ind 1{p_i}$ note that $p^{\alpha_i-1}\mid\mid \binom{r+p_i-1}{p_i}$ and that given integers $a, b, c$ such that $a\not\equiv b\pmod p$ and $p^{\alpha-1}\mid\mid c$ for a prime $p$, then $ac\not\equiv bc\pmod{p^{\alpha}}$. Combining these two observations yields  
\begin{align*}
    A\ind 1{p_i} &\equiv \binom{r+p_i-1}{p_i} \prod_{l=t+1}^{t+p_i-1}m_l^{-1}\\
    &\not\equiv \binom{r+p_i -1}{p_i} \prod_{l=t+1}^{t+p_i-1}m'^{-1}_l\\
    &\equiv B\ind 1{p_i} \pmod{p_i^{\alpha_i}}.
\end{align*}
Thus, by Lemma \ref{notsim} we have $A \not\sim B$. A contradiction.
\end{proof}

This necessary condition on $\sim$-equivalence translates directly into a lower bound on the number of equivalence classes, $\varphi_r(n)$.

\begin{theorem}\label{classesInequality}
Let $r>2$ have prime factorisation $r=2^j p_1^{\alpha_1}\cdots p_k^{\alpha_k}, j\in \N_0$ for odd distinct primes $p_i$. Then
\begin{equation*}
  \varphi_r(n) \geq \prod_{i=1}^k \ceil{(p_i-1)^{n-p_i}}.
\end{equation*}
\end{theorem}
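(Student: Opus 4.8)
The plan is to use Theorem~\ref{necessary} to attach to each vector a tuple of mod-$p_i$ invariants that is constant on $\sim$-equivalence classes, and then to count how many distinct such tuples occur; since $\varphi_r(n)$ is the number of $\sim$-classes, that count is a lower bound. Concretely, for $\overline m \in (Z_r)^n$ and each $i$ with $1 \le i \le k$, I would set
\[
I_i(\overline m) = \left( \left[ \prod_{l=t+1}^{t+p_i-1} m_l \right]_{p_i} \right)_{t=1}^{\,n-p_i} \in (Z_{p_i})^{\,n-p_i},
\]
with the convention that this is the empty (hence unique) tuple when $n < p_i$. Each $m_l$ is a unit modulo $p_i$ because $p_i \mid r$, so the entries genuinely lie in $Z_{p_i}$. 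Theorem~\ref{necessary} says exactly that $\B{r}{m} \sim \Bm{r}{m}$ forces $I_i(\overline m) = I_i(\overline m')$ for all $i$, so the combined map $\overline m \mapsto (I_1(\overline m),\dots,I_k(\overline m))$ is constant on $\sim$-classes, and $\varphi_r(n)$ is bounded below by the number of distinct values it attains.

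Next I would argue that the per-prime invariants are mutually independent, so the number of attainable combined tuples is the product over $i$ of the number of attainable values of $I_i$. This is where the Chinese Remainder Theorem enters: writing $r = 2^j p_1^{\alpha_1}\cdots p_k^{\alpha_k}$, a choice of $\overline m \in (Z_r)^n$ is equivalent coordinatewise to an independent choice of each $m_l$ modulo $2^j$ and modulo each $p_i^{\alpha_i}$. Since $I_i$ depends only on the residues $[m_l]_{p_i}$, one can prescribe the targets for distinct primes simultaneously and independently, and the total count factors as claimed.

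The only genuinely combinatorial step is counting the attainable values of a single $I_i$. Fix $p = p_i$; if $n < p$ the tuple is empty and contributes the factor $1 = \ceil{(p-1)^{n-p}}$, so assume $n \ge p$. I claim the map $\overline m \mapsto I_i(\overline m)$ is onto $(Z_p)^{\,n-p}$, giving exactly $(p-1)^{n-p} = \ceil{(p-1)^{n-p}}$ values. Writing $P_t = \prod_{l=t+1}^{t+p-1} m_l$, consecutive windows satisfy $P_{t+1}/P_t \equiv m_{t+p}/m_{t+1} \pmod{p}$. Given any target $(Q_1,\dots,Q_{n-p})$, I would first choose $m_2,\dots,m_p$ to realise $P_1 \equiv Q_1$, and then, for $t = 1,\dots,n-p-1$ in order, solve the relation above for the \emph{fresh} variable $m_{t+p}$ so that $P_{t+1} \equiv Q_{t+1}$; the indices $p+1,\dots,n-1$ are each used exactly once and do not appear in any earlier window, and each equation is solvable in the group $Z_p$. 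Hence every target is hit. Multiplying the three steps together gives $\varphi_r(n) \ge \prod_{i=1}^k \ceil{(p_i-1)^{n-p_i}}$.

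The step I expect to be most delicate is the surjectivity claim, precisely because the defining windows overlap, so one cannot naively vary each $P_t$ in isolation. The resolution is the observation that advancing $t$ by one introduces exactly one index, $m_{t+p}$, not occurring in the previous windows, which serves as a free parameter that sets the next coordinate without disturbing those already fixed. Everything else — the invariance supplied by Theorem~\ref{necessary} and the cross-prime independence from the Chinese Remainder Theorem — is routine bookkeeping.
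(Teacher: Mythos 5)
Your proposal is correct and follows essentially the same route as the paper: the same window-product invariants $T_i$ derived from Theorem~\ref{necessary}, surjectivity of each $T_i$ via solving for one fresh coordinate per window (the paper just writes this greedy construction as an explicit formula), and the Chinese Remainder Theorem to combine the primes and count the codomain as $\prod_{i=1}^k \ceil{(p_i-1)^{n-p_i}}$. No gaps to report.
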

\begin{proof}
For every $1\leq i \leq k$ we define a function $T_i\colon (Z_{p_i})^n\to (Z_{p_i})^{n-p_i}$ given by 
$$T_i(\overline m) = \left( \left[\prod_{l=t+1}^{t+p_i-1}m_l\right]_{p_i} \right)_{t=1}^{n-p_i}.$$ 
In case $n\leq p_i$, $T_i$ is simply the function $T_i\colon (Z_{p_i})^n\to \{1\}$.
To show that each $T_i$ is surjective, let $\overline m'\in (Z_{p_i})^{n-p_i}$ be a vector  and define $\overline m\in (Z_{p_i})^n$ as follows. 
\begin{align*}
    m_l = \begin{cases}
                1,& l<p_i\\
                m'_{l-p_i+1}\left[\prod_{q=l-p_i+2}^{l-1} m_{q}^{-1}\right]_{p_i} & l\geq p_i.
          \end{cases}
\end{align*}
Since the $t$'th entry of $T_i(\overline m)$ is given by 
\begin{align*}
    \left[\prod_{l=t+1}^{t+p_i-1}m_l\right]_{p_i}=\left[m_{t+p_i-1}\prod_{l=t+1}^{t+p_i-2}m_l \right]_{p_i} = m'_t
\end{align*}
it follows that $\overline m'\in T_i((Z_{p_i})^n)$ and thus, $T_i$ is surjective. 

Now, define the map $T\colon (Z_r)^n\to (Z_{p_1})^{n-p_1}\times \dots \times (Z_{p_k})^{n-p_k}$ by $T(\overline m) = (T_1(\overline m), T_2(\overline m), \dots, T_k(\overline m))$ in the natural way. Since each $T_i$ is surjective on $(Z_{p_i})^{n}\to (Z_{p_i})^{n-p_i}$ it follows by the Chinese Remainder Theorem that $T$ is also surjective. Now, for any two vectors $\overline m, \overline n\in (Z_r)^n$ such that $\B rm\sim\B rn$ we must have $T(\overline m)=T(\overline n)$ by Theorem \ref{necessary}. Thus, $T$ is an invariant of $\sim$-equivalence, it is surjective, and its codomain has $\prod_{i=1}^k \ceil{(p_i-1)^{n-p_i}}$ elements and it follows that indeed
\begin{equation*}
  \varphi_r(n) \geq \prod_{i=1}^k \ceil{(p_i-1)^{n-p_i}}.
\end{equation*}

\end{proof}

By the theorem we now have a lower bound on the number of equivalence classes, but we conjecture that the condition in Theorem \ref{necessary} is actually sufficient whenever $4 \nmid r$. This would then result in equality in the above Theorem \ref{classesInequality}, see Conjectures \ref{classes_conjecture} and \ref{necessary_conjecture}. Note further that using the inequality we can obtain Theorem \ref{phiUpperBound} and Theorem \ref{varphiisp1} since when $n = p+1$ where $p$ is the least odd prime dividing $r$ we will get at least $(p-1)$ classes.
\section{The case of multiples of four}
Until now we have not determined $\widetilde\varphi(r)$ in the special case where four divides $r$. This section will show that for $4\mid r$ we have $\widetilde \varphi (r)\leq 6$ with equality if and only if $3\nmid r$. To this end, we start with a few lemmas regarding specific entries of $\B rm$. Throughout the section we will change our notation slightly to make our calculations more natural, identifying the $r$th vertex of any subgraph of $\NN rm$ with the 0th.

\begin{lemma}\label{from1to4}
	Let $r>2$ be given with $2^t\mid r, t>1$ and let $\overline m \in (Z_r)^4$. Then 
	\begin{equation*}
		2^t \mid \B rm  \langle 1, 4\rangle.
	\end{equation*}
\end{lemma}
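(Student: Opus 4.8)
The plan is to reduce to a one-parameter family and then compute the relevant entry modulo $2^t$ explicitly. First I would normalise the vector: by Lemma \ref{invariantFirstLast} the entry $\B rm\ind14$ does not depend on $m_1$ or $m_4$, and by Lemma \ref{multiply} multiplying $\overline m$ by $m_2^{-1}$ leaves $\B rm$ unchanged. Hence, exactly as in Corollary \ref{change3}, I may assume $\overline m=(1,1,x,1)$ for a single unit $x=m_2^{-1}m_3\in Z_r$, and it suffices to bound the $2$-adic valuation of $f(x):=\BBB{r}{(1,1,x,1)}\ind14$.

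Next I would obtain a closed formula for $f(x)$ using the decomposition $\B rm\ind1n=\sum_q S_j(q)L_j(q)$ employed in the proof of Theorem \ref{main}, specialised to $n=4$, $j=1$. The factor $S_1(q)$ counts legal initial segments from $c_{1,0}$ to $c_{3,q}$ through the first two (all-ones) subgraphs; by the composition count of Theorem \ref{closedFormula} this is $S_1(q)=q$ for $1\le q\le r-1$ and $S_1(0)=r$. The factor $L_1(q)$ counts legal terminal segments from $c_{3,q}$ to $c_{4,0}$, and a direct count along the $x$-indexed third subgraph gives $L_1(q)=[-x^{-1}q]_r+1$. Assembling these yields
\[
 f(x)=r+\frac{r(r-1)}{2}+\sum_{q=1}^{r-1}q\,[-x^{-1}q]_r ,
\]
which I would sanity-check against $f(1)=\binom{r+2}{3}$ from Theorem \ref{closedFormula}. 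It then suffices to prove, modulo $2^t$, that $\binom{r+2}{3}\equiv 0$ and that the sum $G(y):=\sum_{q=1}^{r-1}q\,[-yq]_r$ satisfies $G(y)\equiv G(1)$ for every unit $y$, since this forces $f(x)\equiv f(1)=\binom{r+2}{3}\equiv 0$. The first point is immediate from $v_2\!\big(\binom{r+2}{3}\big)=v_2(r)\ge t$, using $4\mid r$.

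For the invariance of $G$, writing $[-yq]_r=r-[yq]_r$ and discarding the term $\tfrac{r^2(r-1)}{2}$ (whose valuation is $\ge 2t-1\ge t$) reduces the claim to $H(y)\equiv H(1)\pmod{2^t}$ for $H(y):=\sum_{q=1}^{r-1}q\,[yq]_r$. Since $q\mapsto[yq]_r$ permutes $\{1,\dots,r-1\}$, expanding squares gives the identity $\sum_{q=1}^{r-1}(q-[yq]_r)^2=2H(1)-2H(y)$, so I must show $\sum_{q=1}^{r-1}(q-[yq]_r)^2\equiv 0\pmod{2^{t+1}}$.

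This last congruence is the crux and the main obstacle, since the individual summands are genuine integers with no obvious modular structure. The resolution is that $q-[yq]_r\equiv q(1-y)\pmod{2^t}$ (as $2^t\mid r$), and squaring upgrades a congruence modulo $2^t$ to one modulo $2^{t+1}$, so $(q-[yq]_r)^2\equiv q^2(1-y)^2\pmod{2^{t+1}}$. Summing yields $\sum_q(q-[yq]_r)^2\equiv (1-y)^2\sum_{q=1}^{r-1}q^2\pmod{2^{t+1}}$; here $1-y$ is even (every unit modulo the even number $r$ is odd), so $v_2((1-y)^2)\ge 2$, while $v_2\!\big(\sum_{q=1}^{r-1}q^2\big)=v_2\!\big(\tfrac{(r-1)r(2r-1)}{6}\big)=v_2(r)-1\ge t-1$. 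Thus the product has valuation $\ge t+1$, which closes the argument. I expect the careful bookkeeping of the legality constraints in deriving $S_1$ and $L_1$, rather than this final congruence, to be the most error-prone part to write out.
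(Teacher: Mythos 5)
Your proof is correct; I checked the counts $S_1(q)=q$ for $1\le q\le r-1$, $S_1(0)=r$, $L_1(q)=[-x^{-1}q]_r+1$, the identity $\sum_{q=1}^{r-1}(q-[yq]_r)^2=2H(1)-2H(y)$, and the closing valuation estimate $v_2\bigl((1-y)^2\sum_{q=1}^{r-1}q^2\bigr)\ge 2+(v_2(r)-1)\ge t+1$. The overall skeleton (normalise via Corollary \ref{change3}, split the path count as $\sum_q S(q)L(q)$, compare with the all-ones value $\binom{r+2}{3}$, finish with $v_2\bigl(\binom{r+2}{3}\bigr)=v_2(r)\ge t$) matches the paper, but your pivotal step is genuinely different, and the difference traces to where the free unit sits. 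The paper normalises to $\overline m=(1,m_2,1,1)$ and cuts at the \emph{second} subgraph, so the initial count satisfies $S_2(q)\equiv qm_2^{-1}\pmod r$ while the terminal count through the all-ones tail is the polynomial $L_2(q)=r-q+1$; consequently the unit factors out of the entire sum modulo $r$, giving $\B rm\ind 14\equiv m_2^{-1}\sum_{q=1}^{r}q(r-q+1)=m_2^{-1}\binom{r+2}{3}\pmod r$ in one line. Your normalisation $(1,1,x,1)$ instead leaves the reduced residue $[-x^{-1}q]_r$ inside the sum weighted by $q$, where the unit does not factor out modulo $r$, and you compensate with the permutation/sum-of-squares identity together with the observation that squaring lifts congruences from modulo $2^t$ to modulo $2^{t+1}$. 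That compensation is valid and self-contained --- it even establishes the cleaner invariance $G(y)\equiv G(1)\pmod{2^t}$ for every unit $y$ without needing any sum to collapse into a binomial coefficient --- but it is extra machinery: the paper's placement of the unit buys an essentially three-line argument. Note also that a shortcut exists even inside your setup: $q[-x^{-1}q]_r\equiv-x^{-1}q^2\pmod r$ gives $f(x)\equiv\tfrac{r(r-1)}{2}-x^{-1}\sum_{q=1}^{r-1}q^2\pmod{2^t}$, and multiplying by the odd number $3$ rewrites this as $\tfrac{r(r-1)}{2}\bigl(3-x^{-1}(2r-1)\bigr)$, whose second factor is even since $x^{-1}$ is odd, so the valuation is at least $v_2(r)\ge t$ --- the same parity fact your squares argument exploits, with less work.
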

\begin{proof}
	By Corrollary \ref{change3}
	we can assume without loss of generality that $\overline m = (1, m_2, 1,1)$ for some $m_2\in Z_r$. We calculate $\B rm  \langle 1, 4\rangle$ by counting the number of legal paths from $c_{1, 0}$ to $c_{4, 0}$. We will sum over the last vertex $q, 1\leq q\leq r,$ of the second subgraph that each path visits. Denote by $S_2(q)$ the number of paths from $c_{1, 0}$ to $c_{2, q}$ that are subpaths of a legal path from $c_{1, 0}$ to $c_{4, 0}$ and similarly, let $L_2(q)$ denote the number of paths from $c_{2, q}$ to $c_{4, 0}$ that do not traverse any edges of the second subgraphs and that are subpaths of a legal path from $c_{1,0}$ to $c_{4, 0}$.Then 
	\begin{equation*}
		\B rm\langle 1, 4\rangle = \sum_{q=1}^r S_2(q)L_2(q).
	\end{equation*}
	\noindent
	First, it is not hard to see that $L_2(q) = r-q+1$ as $m_3 =1$. 
	\\
	Second, if we write $q=[tm_2]_r, 1\leq t\leq r$ we can see that for every subpath $\phi$ counted by $S_2(q)$ there must be a first vertex $c_{2, v}$ of the second subgraph that it visits. We must have $v\in \{[wm_2]_r\mid 1\leq w\leq t\}$ for else $\phi$ could never legally visit $c_{2, q}$. Further, there is exactly one subpath $\phi$ going through $c_{2, v}$ as specified, the path 
	$$c_{1, 0}\to c_{1, 1}\to \dots\to c_{1, v}\to c_{2, v}\to c_{2, [v+m_2]_r}\to c_{2, t}.$$
	It follows that 
	$$S_2(q)=\abs{\{[wm_2]_r\mid 1\leq w\leq t\}} = t\equiv qm_2^{-1}\pmod r,$$
	so we can calculate
	\begin{align*}
		\B rm \langle 1, 4\rangle \equiv \sum_{q=1}^r qm_2^{-1} (r-q+1)
		\equiv m_2^{-1}\sum_{q=1}^r q(r-q+1) \pmod r.
	\end{align*}
	By noting that $\B r1 \ind 14 \equiv \sum_{q=1}^r q(r-q+1) \pmod r$, it follows that
	\begin{align*}
	    \B rm \langle 1, 4\rangle \equiv m_2^{-1}\B r1\langle 1, 4\rangle \equiv m_2^{-1}\binom{r+2}{3} \equiv 0\pmod {2^{t}}
	\end{align*}
	by use of Theorem \ref{closedFormula}. 
\end{proof}

\begin{lemma}\label{from1to5}
	Let $r>2$ be given and assume that $2^t\mid\mid r $ for a $t>1$ and let $\overline m \in (Z_r)^5$. Then 
	\begin{equation*}
		2^{t-2}\mid \mid \B{r}{m} \langle 1, 5\rangle.
	\end{equation*}
\end{lemma}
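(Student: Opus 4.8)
The plan is to imitate the two–subgraph decomposition from the proof of Lemma~\ref{from1to4}, pushed one subgraph further, and then reduce everything modulo $2^{t-1}$. Write $r=2^ts$ with $s$ odd; the only arithmetic facts needed are that every multiple of $r$ vanishes modulo $2^{t-1}$, and, as a consistency check, that $2^{t-2}\mid\mid\binom{r+3}{4}=\B r1\ind 1 5$ (by Theorem~\ref{closedFormula}, since $r(r+2)$ contributes $t+1$ factors of $2$ while the denominator $24$ removes $3$). By Corollary~\ref{change3} I may assume $\overline m=(1,m_2,m_3,1,1)$, so that the last two subgraphs use step $1$.

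I would then split each legal path from $c_{1,0}$ to $c_{5,0}$ at the last vertex $q$ it visits in the third subgraph, writing
\[
\B rm\ind 1 5=\sum_{q}S_3(q)\,L_3(q),
\]
where $L_3(q)$ counts the completions through the all-ones subgraphs $4,5$ and $S_3(q)$ counts the heads through subgraphs $1,2,3$. Exactly as in Lemma~\ref{from1to4} one gets $L_3(q)=r-q+1$. Splitting $S_3$ once more at the second subgraph, and using that $c_{3,q}$ is reachable from $c_{2,p}$ precisely for the $\bar p:=[m_3^{-1}p]_r$ with $1\le \bar p\le \bar q:=[m_3^{-1}q]_r$, together with the head count $\equiv p\,m_2^{-1}\pmod r$ from that lemma, gives $S_3(q)\equiv m_3m_2^{-1}\tfrac{\bar q(\bar q+1)}{2}\pmod{2^{t-1}}$, all $[\,\cdot\,]_r$-corrections being multiples of $r$.

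Substituting this back, reducing the weight $r-q+1\equiv 1-m_3\bar q\pmod{2^{t-1}}$, and changing the summation variable to $\bar q$, the whole entry collapses to the elementary power sum $\tfrac{m_3m_2^{-1}}{2}\sum_{\bar q=1}^{r-1}(1-m_3\bar q)\,\bar q(\bar q+1)$, which evaluates in closed form to
\[
\frac{m_3m_2^{-1}\,(r-1)\,r\,(r+1)\,\bigl(4-m_3(3r-2)\bigr)}{24}.
\]
Now I read off the $2$-adic valuation: $m_2,m_3$ are units, $r\pm1$ are odd, $r$ has valuation $t$, and $4-m_3(3r-2)=4-2\cdot(\text{odd})$ has valuation exactly $1$ (here $t>1$ is used, giving $4\mid r$ and hence $3r-2\equiv 2\pmod 4$). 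Thus the numerator has valuation exactly $t+1$ and the quotient by $24$ valuation exactly $t-2$. It then remains to check that the contributions I suppressed — the coasting paths that reach an intermediate $0$-vertex and run straight to $c_{5,0}$ — are $0$ modulo $2^{t-1}$; these are controlled by the shorter entries $\B rm\ind 1 3=\binom{r+1}{2}$ and $\B rm\ind 1 4$, of valuation $t-1$ and $t$ (the latter by Lemma~\ref{from1to4}), both at least $t-1$. Hence $\B rm\ind 1 5$ is congruent modulo $2^{t-1}$ to a quantity of valuation exactly $t-2$, forcing $2^{t-2}\mid\mid\B rm\ind 1 5$.

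The step I expect to be the real obstacle is this final $2$-adic bookkeeping. Because $p=2$ the hypotheses of Theorem~\ref{main} genuinely fail — already $\B rm\ind 1 3$ is divisible only by $2^{t-1}$, not $2^t$ — so it cannot be invoked, and the factors of $2$ lost to the denominators $2$ and $24$ in the power-sum evaluation must be tracked by hand to confirm they leave valuation exactly $t-2$ and not more. Equally delicate is verifying that the legal-path constraints make the coasting terms of valuation at least $t-1$, so that they may legitimately be discarded modulo $2^{t-1}$.
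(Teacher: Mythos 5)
Your proof is correct, and it shares the paper's basic machinery: normalize via Corollary \ref{change3}, cut each legal path at its last visit to an intermediate subgraph, and reuse the head count $S_2(p)\equiv p\,m_2^{-1}\pmod r$ from Lemma \ref{from1to4}. But the execution is genuinely different. The paper normalizes to $(1,m_2,1,m_4,1)$ and iterates the reduction while staying modulo $2^t$: it shows $\B rm\ind 15\equiv m_2^{-1}\BBB{r}{(1,1,1,m_4,1)}\ind 15\equiv m_2^{-1}m_4^{-1}\binom{r+3}{4}\pmod{2^t}$, so the valuation $t-2$ is inherited from the all-ones entry of Theorem \ref{closedFormula}; this keeps the multiplicative-twist structure that mirrors Theorem \ref{main}. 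You instead normalize to $(1,m_2,m_3,1,1)$, cut once at the third subgraph with explicit counts on both sides, and evaluate the resulting sum in closed form, $\frac{m_3m_2^{-1}(r-1)r(r+1)\left(4-m_3(3r-2)\right)}{24}$, reading off the $2$-adic valuation directly; that is closer in spirit to the paper's Lemma \ref{from1to6} than to its own proof of this lemma, and your algebra and valuation count (numerator valuation $t+1$, denominator $3$) are right. Working modulo $2^{t-1}$ rather than $2^t$ is a legitimate simplification, since $t-2<t-1$ forces the exact valuation, though it forfeits the sharper mod-$2^t$ congruence the paper records (not needed for the statement). One bookkeeping wrinkle: with $L_3(q)=r-q+1$ you have already included the paths that coast from $c_{4,0}$ (the $v=r$ choice), so the only suppressed term is the $q=0$ one, equal to $\B rm\ind 13=\binom{r+1}{2}$; listing $\B rm\ind 14$ as a further suppressed contribution double-counts it. Since $2^t\mid\B rm\ind 14$ by Lemma \ref{from1to4}, this is invisible modulo $2^{t-1}$ and does not affect your conclusion.
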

\begin{proof}
	By Lemma  \ref{change3} we can assume without loss of generality that $\overline m = (1, m_2, 1, m_4, 1)$. We calculate $\B rm  \langle 1, 4\rangle$ by counting the number of legal paths from $c_{1, 0}$ to $c_{5, 0}$. We will sum over the last vertex $q, 1\leq q\leq r,$ of the second subgraph that each path visits. Denote by $S_2(q)$ be the number of paths from $c_{1, 0}$ to $c_{2, q}$ that are subpaths of a legal path from $c_{1, 0}$ to $c_{5, 0}$ and similarly, let $L_2(q)$ denote the number of paths from $c_{2, q}$ to $c_{5, 0}$ that do not traverse any edges of the second subgraph and are subpaths of a legal path from $c_{1,0}$ to $c_{5, 0}$.Then 
	\begin{equation*}
		\B rm\langle 1, 5\rangle = \sum_{q=1}^r S_2(q)L_2(q).
	\end{equation*}
	\noindent
	As in the proof of Lemma \ref{from1to4}, $S_2(q) \equiv qm_2^{-1}\pmod r$. It follows that
	\begin{align}\label{no2}
		\B rm \langle 1, 5\rangle &\equiv \sum_{q=1}^{r} qm_2^{-1}L_2(q) \equiv m_2^{-1}\BBB{r}{(1, 1, 1, m_4, 1)}\langle 1, 5\rangle \pmod{r}.
	\end{align}
	We proceed to calculate $\BBB r{(1, 1, 1, m_4, 1)} \langle 1, 5\rangle$ by almost the same approach as before. Write
	\begin{align*}
		\BBB r{(1, 1, 1, m_4, 1)}\langle 1, 5\rangle = \sum_{q=1}^{r}S_3(q)L_4(q),
	\end{align*}
	where $S_3(q)$ is the number of paths on $\NNN r{(1, 1, 1, m_4 ,1)}$  from $c_{1, 0}$ to $c_{3, q}$ that are subpaths of a legal path from $c_{1, 0}$ to $c_{5, 0}$. Further, $L_4(q)$ is the number of paths from $c_{3, q}$ to $c_{5, 0}$ that do not traverse any edge of the third subgraph and are subpaths of a legal path from $c_{1,0}$ to $c_{5, 0}$. Let $\phi$ be a path counted by $L_3(q)$ and let $c_{4, v}$ be the last vertex of the fourth subgraph that $\phi$ visits. By $L_3(q, v)$ we count the number of such $\phi$. Then
	\begin{align*}
	    \BBB r{(1, 1, 1, m_4, 1)}\langle 1, 5\rangle &= \sum_{q=1}^{r}S_3(q)\sum_{v=1}^rL_3(q, v)\\
	     &= \sum_{v=1}^r\sum_{q=1}^{r}S_3(q)L_3(q, v)\\
	     &= \sum_{q=1}^rS_3(q)L_3(q, r)+\sum_{v=1}^{r-1}\sum_{q=1}^rS_3(q)L_3(q, v).
	\end{align*}
	Since $\sum_{q=1}^rS_3(q)L_3(q, r)$ simply counts the number of legal paths from $c_{1, 0}$ to $c_{4, r}=c_{4, 0}$ that are the subpath of a legal path from $c_{1, 0}$ to $c_{5, 0}$, we have \begin{align*}
	    \sum_{q=1}^rS_3(q)L_3(q, r) = \BBB r{(1, 1, 1, m_4, 1)}\langle 1, 4\rangle = \binom{r+2}{3}\equiv 0\pmod {2^{t}}
	\end{align*}
	by Theorem \ref{closedFormula} and Lemma \ref{invariantFirstLast}. 
	Considering the case $1\leq v<r$ yields that $L_3(q, v)=0$ if $[qm_4^{-1}]_r>[vm_4^{-1}]_r$ since there is no legal path from $c_{4, q}$ to $c_{4, v}$ because such a path would visit $c_{4, 0}$ and $v\neq 0$. Further, if $[qm_4^{-1}]_r\leq [vm_4^{-1}]_r$ we have $L_3(q, v)=1$ since only the path 
	\begin{align*}
	    c_{3, q} \to c_{4, q}\to c_{4, q+m_4}\to \dots \to c_{4, v}\to c_{5, v}\to \dots \to c_{5, 0}
	\end{align*}
	satisfies the criteria. It follows that
	\begin{align*}
	    \BBB r{(1, 1, 1, m_4, 1)}\langle 1, 5\rangle &\equiv \sum_{v=1}^{r-1}\sum_{q=1}^rS_3(q)L_3(q, v)\\
	    &\equiv \sum_{q=1}^r \sum_{\substack{1\leq v<r \\ [qm_4^{-1}]_r\leq [vm_4^{-1}]_r}} S_3(q)\\
	    &= \sum_{q=1}^r [r-qm_4^{-1}]_r S_3(q) \pmod{2^t} 
	\end{align*}
	where the last equality follows since multiplying by $m_4^{-1}$ modulo $r$ induces a bijection on the set $\{1, \dots, r-1\}$, yielding 
	$$\abs{\{v\mid 1\leq v<r \wedge [qm_4^{-1}]_r\leq [vm_4^{-1}]_r\}}=[r-qm_4^{-1}]_r.$$ 
	So we get
	\begin{align*}
	    \BBB r{(1, 1, 1, m_4, 1)}\langle 1, 5\rangle &\equiv \sum_{q=1}^r [r-qm_4^{-1}]_r S_3(q)\\
	    &\equiv m_4^{-1} \sum_{q=1}^r -q S_3(q)\\
	    &\equiv m_4^{-1} \B r1\ind 15 \pmod {2^t}.
	\end{align*}
	Inserting in \eqref{no2} then finally yields
	\begin{align*}
	    \B rm \langle 1, 5\rangle &\equiv  m_2^{-1}\BBB{r}{(1, 1, 1, m_4, 1)}\langle 1, 5\rangle \\
	    &\equiv m_2^{-1}m_4^{-1} \B r1\ind 15 \\
	    &\equiv m_2^{-1}m_4^{-1} \binom{r+3}{4}\\
	    &\equiv s2^{t-2}\pmod{2^t}
	\end{align*}
	for some odd integer $s$ since $2^{t-2}\mid\mid \binom{r+3}4$ as $4\mid r$.
\end{proof}

\begin{lemma}\label{from1to6}
    Let $r>2$ be given. Then 
    \begin{align*}
        \BBB r{(1, 1, -1, 1, 1, 1)}\ind 16 = \tfrac{11}{20}r+\tfrac38r^2-\tfrac18r^3+\tfrac18r^4+\tfrac3{40}r^5
    \end{align*}
\end{lemma}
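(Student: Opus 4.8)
The plan is to compute $\BBB r{(1,1,-1,1,1,1)}\langle 1,6\rangle$ directly by counting legal paths, exploiting that only the third subgraph has a nontrivial step ($m_3=-1$) while all remaining steps equal $1$. Since $m_1$ and $m_6$ are irrelevant by Lemma \ref{invariantFirstLast}, the only subtlety comes from subgraph $3$. Because in $\NN{r}{m}$ every inter-subgraph edge goes from subgraph $s$ to $s+1$, a legal path from $c_{1,0}$ to $c_{6,0}$ must traverse subgraph $3$, so the first vertex $c_{3,u}$ that it visits there is well defined. I would organize the count accordingly as
\begin{equation*}
\BBB r{(1,1,-1,1,1,1)}\langle 1,6\rangle=\sum_{u=0}^{r-1}S(u)L(u),
\end{equation*}
where $S(u)$ counts legal subpaths from $c_{1,0}$ first reaching subgraph $3$ at $c_{3,u}$, and $L(u)$ counts the legal continuations from $c_{3,u}$ to $c_{6,0}$.

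For the incoming count the steps in subgraphs $1,2$ both equal $1$, so the stars-and-bars argument of Theorem \ref{closedFormula} gives $S(u)=u$ for $1\le u\le r-1$; the case $u=0$ forces the path to have reached the intermediate $0$-vertex $c_{2,0}$ and then run straight down, contributing the $r$ legal paths from $c_{1,0}$ to $c_{2,0}$ counted by Corollary \ref{smallcase}. For the outgoing count with $u\ge 1$, moving $b$ steps in subgraph $3$ (each of size $-1$) lands at $w=u-b$: the value $w=0$ forces the straight drop (one path), while each $w\in\{1,\dots,u\}$ exits subgraph $3$ at $c_{4,w}$ followed by an all-ones passage through subgraphs $4,5,6$, which by stars-and-bars gives $\binom{r-w+2}{2}$ paths. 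Hence
\begin{equation*}
L(u)=1+\sum_{w=1}^{u}\binom{r-w+2}{2}.
\end{equation*}

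Assembling the pieces, the $u=0$ term together with the ``$1+$'' parts of the $L(u)$ collapses to $r+\sum_{u=1}^{r-1}u=\tfrac{r(r+1)}2$, leaving
\begin{equation*}
\BBB r{(1,1,-1,1,1,1)}\langle 1,6\rangle=\frac{r(r+1)}2+\sum_{u=1}^{r-1}u\sum_{w=1}^{u}\binom{r-w+2}{2}.
\end{equation*}
The inner sum telescopes by the hockey-stick identity to $\binom{r+2}{3}-\binom{r-u+2}{3}$, so the double sum equals $\binom{r+2}{3}\tfrac{(r-1)r}2-\sum_{u=1}^{r-1}u\binom{r-u+2}{3}$. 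The one clean step worth isolating is the weighted hockey-stick evaluation $\sum_{u=1}^{r-1}u\binom{r-u+2}{3}=\binom{r+3}{5}$ (substitute $k=r-u$ and use $k\binom{k+2}{3}=4\binom{k+3}{4}-3\binom{k+2}{3}$ together with $(r+3)\binom{r+2}{4}=5\binom{r+3}{5}$). Expanding $\tfrac{r(r+1)}2+\binom{r+2}{3}\tfrac{(r-1)r}2-\binom{r+3}{5}$ as a polynomial in $r$ and collecting coefficients then yields exactly $\tfrac{11}{20}r+\tfrac38r^2-\tfrac18r^3+\tfrac18r^4+\tfrac3{40}r^5$.

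The main obstacle is not the algebra, which is routine binomial bookkeeping, but pinning down the boundary cases of the legal-path definition exactly: correctly accounting for the ``finish early'' paths that hit an intermediate $0$-vertex under the exception clause of Definition \ref{B}, both at $c_{2,0}$ (the $u=0$ term) and at $c_{3,0}$ (the $+1$ inside $L(u)$), and ensuring the descent in subgraph $3$ never crosses $0$ prematurely. An off-by-one in any of these would corrupt precisely the low-degree coefficients $\tfrac{11}{20}r$ and $\tfrac38r^2$, so these cases must be handled with care.
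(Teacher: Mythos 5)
Your proposal is correct and takes essentially the same approach as the paper: both split legal paths at the third subgraph (the paper conditions on the \emph{last} vertex visited there, you on the \emph{first}), use the same stars-and-bars counts and the same handling of the $0$-vertex boundary cases, and end up with the identical double sum, merely grouped by the other summation index. The only cosmetic difference is that you evaluate the sum via hockey-stick/Vandermonde identities, whereas the paper expands and applies Faulhaber's formula.
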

\begin{proof}
    In the graph $\NNN r{1, 1, -1, 1, 1, 1}$ let again $S_3(q)$ be the number of paths from vertex $c_{1, 0}$ to $c_{3, q}$ that are subpaths of a legal path from $c_{1, 0}$ to $c_{6, 0}$ such that $c_{3,q}$ is the last vertex visited in the third subgraph and let $L_3(q)$ be the number of paths from $c_{3, q}$ to $c_{6, 0}$ that does not traverse any edges of the third subgraph and are subpaths of a legal path from $c_{1, 0}$ to $c_{6, 0}$. We will find a closed form for each function.
    
	First, let $0<q<r$. Counting the paths of $S_3(q)$, we notice that there is exactly one	path from $c_{1, 0}$ to $c_{3, q}$ for every path from $c_{1, 0}$ to $c_{2,i}$ for $p>i\geq q$. This is the path
	\begin{align*}
	    c_{1, 0}\to\dots c_{2, i}\to c_{3, i}\to c_{3, i-1}\to \dots\to c_{3, q}.
	\end{align*}
	Since $m_1=m_2=1$ in this case, the number of paths from $c_{1, 0}$ to $c_{2, i}$ that are part of a legal path from $c_{1, 0}$ to $c_{6, 0}$ is $i$. Thus, 
	\begin{align*}
	    S_3(q) = \sum_{i=q}^{r-1} i = \frac{(r-q)(r+q-1)}{2}, 0<q<r.
	\end{align*}
	The function $L_3(q)$ is only counting paths that are traversing subgraphs with parameter $m_i=1$. We see by Corrollary \ref{smallcase} that 
	\begin{align*}
	    L_3(q) = \B{r-q+1}{1}\langle 1,3\rangle = \frac{(r-q+1)(r-q+2)}2.
	\end{align*}
	
	Second, for $q=0$ we have $S_3(0) = \frac{r(r+1)}{2}$ by Corollary \ref{smallcase} since this is simply $\BBB{r}{(1, 1, -1, 1, 1)}\langle 1, 3\rangle$.
	Further, there is only one legal subpath from $c_{4, 0}$ to $c_{6, 0}$ of a legal path from $c_{1, 0}$ to $c_{6, 0}$ so $L_3(0) = 1$.
	
	Thus, we have
	\begin{align*}
		\BBB r{(1, 1, -1, 1, 1)}\langle 1, 6\rangle &= \sum_{q=0}^{r-1} S_3(q)L_3(q)\\
		&= \frac{r(r+1)}{2} + \sum_{q=1}^{r-1} \frac{(r-q)(r-q+1)(r-q+2)(r+q-1)}{4}\\
		&= \tfrac{11}{20}r+\tfrac38r^2-\tfrac18r^3+\tfrac18r^4+\tfrac3{40}r^5,
	\end{align*}
	where the last equality follows by writing out the expression and applying Faulhaber's formula \cite{jacobi}.
\end{proof}

\begin{theorem}\label{varphiequals6}
	Let $r>2$ be given such that $4\mid r$. Then $\widetilde \varphi(r)= 6$ with equality if and only if $3\nmid r$.
\end{theorem}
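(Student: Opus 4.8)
The plan is to dispose of the case $3\mid r$ first and then concentrate on $3\nmid r$. If $3\mid r$, the result of Eilers, Restorff, Ruiz and Sørensen \cite{seerapws:gccfg} recorded just after Corollary \ref{3divides} gives $\widetilde\varphi(r)=4<6$, so the real content is that $4\mid r$ together with $3\nmid r$ forces $\widetilde\varphi(r)=6$. Write $r=2^t s$ with $t\geq 2$ and $s$ odd, and observe that every odd prime dividing $r$ is then at least $5$. I would establish the two inequalities $\widetilde\varphi(r)\geq 6$ and $\widetilde\varphi(r)\leq 6$ separately.

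For the lower bound I would show $\varphi_r(n)=1$ for $n\leq 5$ by reducing an arbitrary $\B rm$ to $\B r1$, an analogue of Lemma \ref{eqMatrices} for $4\mid r$. The cases $n\leq 3$ are Corollary \ref{smallcase}. For $n=4,5$ the entries away from the two fixed superdiagonals are highly divisible: Lemma \ref{from1to4} with Lemma \ref{divides} (applicable since every odd prime divisor of $r$ exceeds the index difference $\leq 3$) shows each $\ind i{i+3}$-entry is divisible by $r$, while Lemma \ref{from1to5} with Lemma \ref{divides} shows each $\ind i{i+4}$-entry has $2$-adic valuation exactly $t-2$ and is divisible by $s$. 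Since $\ind i{i+1}=r$ and $\ind i{i+2}=\tfrac{r(r+1)}2$ are the same for every $\overline m$ by Corollary \ref{smallcase}, I can run short sequences of pivots built from the boundary rows and columns (those containing a single entry $r$); each such sequence changes one chosen entry by a multiple of $r$ or of $\tfrac{r(r+1)}2$ while leaving the rest of the matrix fixed. First clearing every $\ind i{i+3}$-entry to $0$ and then adjusting the $\ind i{i+4}$-entries, which for every $\overline m$ are congruent to $2^{t-2}$ modulo $2^{t-1}$ and to $0$ modulo $s$, the residual differences are multiples of $r/2=\gcd\!\left(r,\tfrac{r(r+1)}2\right)$ and can be absorbed. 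This reduces every $\B rm$ with $n\leq 5$ to $\B r1$, giving $\widetilde\varphi(r)\geq 6$.

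For the upper bound I would exhibit two inequivalent length-$6$ matrices. If the smallest odd prime divisor of $r$ equals $5$, then Theorem \ref{phiUpperBound} already yields $\widetilde\varphi(r)\leq p+1=6$, so the remaining case is $5\nmid r$ (in particular $r=2^t$). Here I compare $\B r1$ with $\BBB r{(1,1,-1,1,1,1)}$: Theorem \ref{closedFormula} gives $\B r1\ind 16=\binom{r+4}5$, and with the exact value from Lemma \ref{from1to6} one computes
\[
\B r1\ind 16-\BBB r{(1,1,-1,1,1,1)}\ind 16=\frac{-r}{120}\bigl(8r^4+5r^3-50r^2-5r+42\bigr),
\]
whose $2$-adic valuation is exactly $t-2$, so the two corner entries are incongruent modulo $2^{t-1}$ but congruent modulo $2^{t-2}$. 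The goal is to promote this genuine difference into a $\sim$-invariant separating the two matrices.

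The hard part will be precisely this last step. One cannot simply invoke Lemma \ref{notsim}, because the $\ind i{i+4}$-entries have $2$-adic valuation only $t-2$: adding the column carrying $\ind 15$ (or the row carrying $\ind 26$) changes the corner $\ind 16$ by an odd multiple of $2^{t-2}$ without disturbing anything modulo $2^{t-1}$, so the corner residue is only well defined modulo $2^{t-2}$, and the difference above vanishes there. Thus every modulus for which all remaining entries are divisible fails to detect the difference, and no single congruence on the corner can separate the matrices. To finish I expect to have to analyse the joint orbit of the two $\ind i{i+4}$-entries together with the corner under the restricted pivots---equivalently, to reduce the matrices modulo a power of $2$ larger than $2^t$ and exploit the fixed couplings $\ind i{i+2}=\tfrac{r(r+1)}2$---and it is this refined bookkeeping, rather than any one divisibility estimate, that is the genuine obstacle.
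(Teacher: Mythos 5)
Your overall strategy coincides with the paper's: dispose of $3\mid r$ via the known $\widetilde\varphi(r)=4$; prove $\varphi_r(n)=1$ for $n\le 5$ by pinning down the shape of $\B rm-I$ (Corollary \ref{smallcase}, Lemmas \ref{divides}, \ref{from1to4}, \ref{from1to5} give entries $r$, $\tfrac{r(r+1)}2$, multiples of $r$, and odd multiples of $\tfrac r4$ also divisible by $s$) and then absorbing the residual multiples of $r/2$ by compensated pivots, exactly as in the paper's three-step reduction; and for the upper bound, handle $5\mid r$ by Theorem \ref{phiUpperBound} and otherwise compare $\B r1$ with $\BBB r{(1,1,-1,1,1,1)}$ using Lemma \ref{from1to6}. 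Your corner computation is correct: the difference of the $\ind 16$-entries is $-\tfrac r{120}\bigl(8r^4+5r^3-50r^2-5r+42\bigr)$, of $2$-adic valuation exactly $t-2$, and you are also right that Lemma \ref{notsim} cannot finish the argument, since the $\ind i{i+4}$-entries themselves have valuation only $t-2$.

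But that is precisely where your proof stops, and this is a genuine gap: you never establish $\B r1\not\sim \BBB r{(1,1,-1,1,1,1)}$, which is the heart of the inequality $\widetilde\varphi(r)\le 6$ when $3,5\nmid r$; you only explain why the obvious invariant fails and defer the "refined bookkeeping." The paper actually carries this out: using Lemmas \ref{from1to4}, \ref{from1to5}, \ref{from1to6} together with Lemma \ref{divides}, it reduces both $6\times 6$ matrices by explicit restricted pivots to matrices all of whose entries are multiples of $\tfrac r4$, divides through by $\tfrac r4$ (which preserves $\cong$), and is left with two concrete integer matrices with entries in $\{0,1,2,4,\pm 1\}$; their non-equivalence is then verified by the finite check that the linear system $XC=DY$ with $X,Y$ unitriangular, as in Definition \ref{maindefinition}, has no solution. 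Some such terminal computation (or a genuinely new invariant) is unavoidable, and without it the theorem is unproven. A secondary slip: $3,5\nmid r$ does not imply $r=2^t$ (consider $r=28$); the odd part $s$ of $r$ must still be carried along, which the paper does by noting that every odd prime divisor of $r$ is then at least $7$, so Lemma \ref{divides} makes every relevant entry divisible by $s$ and the reduction modulo powers of $2$ alone would not suffice as stated.
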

\begin{proof}
    If $3\mid r$, we have $ \widetilde \varphi(r)\leq 4$ by Theorem \ref{phiUpperBound}, so we will now only consider the case when $3\nmid r$.
    
	First, we show that $\widetilde\varphi(r)>5$. Let $\overline m, \overline{m}'\in (Z_r)^5$ be given and let $X=\A{r}{m}$ and $Y=\Am{r}{m}$. We will demonstrate that $X\sim Y$ proving that $\varphi_r(5)=1$.
	Since $3\nmid r$ it follows from Lemma \ref{divides} that if $r=s2^t, 2\nmid s$ then $s$ will divide every entry of $\B rm$ and $\Bm r{m}$ except for the diagonal. Thus, by Lemmas \ref{from1to4} and \ref{from1to5} the matrices are of the following form.
	\begin{align*}
		X-I &= \begin{bmatrix}
			     0&r&\frac{r(r+1)}{2} &x_1 r& x_2 \frac r4\\
			     0&0&r&\frac{r(r+1)}{2}&x_3 r\\
			     0&0&0&r&\frac{r(r+1)}{2}\\
			     0&0&0&0&r\\
			     0&0&0&0&0\\
		     \end{bmatrix}\\
		  Y-I &= \begin{bmatrix}
			     0&r&\frac{r(r+1)}{2} &y_1 r& y_2 \frac r4\\
			     0&0&r&\frac{r(r+1)}{2}&y_3 r\\
			     0&0&0&r&\frac{r(r+1)}{2}\\
			     0&0&0&0&r\\
			     0&0&0&0&0\\
		     \end{bmatrix}
	\end{align*}
	for integers $x_1, x_2, x_3, y_1,y_2, y_3$, where $2\nmid x_2, y_2$. Now, reducing according to Definition \ref{maindefinition} in a number of steps, we get
	\begin{align*}
		X-I \stackrel{1}{\cong} \begin{bmatrix}
			     0&r&\frac{r}{2} &0&x_2 \frac r4\\
			     0&0&r&\frac{r(r+1)}{2}&0\\
			     0&0&0&r&\frac{r}{2}\\
			     0&0&0&0&r\\
			     0&0&0&0&0\\
		     \end{bmatrix} 
		     \stackrel{2}{\cong} \begin{bmatrix}
			     0&r&\frac{r}{2} &0&y_2 \frac r4\\
			     0&0&r&\frac{r(r+1)}{2}&0\\
			     0&0&0&r&\frac{r}{2}\\
			     0&0&0&0&r\\
			     0&0&0&0&0\\
		     \end{bmatrix}
		     \stackrel3\cong Y-I.
	\end{align*}
	Step 1 reduces the entries of the first row and last column of $X-I$ modulo $r$ by subtracting the fourth row and second column from the others. \\Step 2 adds the third column to the last column $\frac{y_2-x_2}2$ times and then subtracts the fourth row from the second  $\frac{y_2-x_2}2$ times. \\Step 3 is simply the reverse of step 1 except with $Y-I$ instead of $X-I$. It follows that $X\sim Y$.
	
	Second, we show that $\widetilde\varphi(r)\leq 6$, which completes the proof. Suppose that $5\mid r$. Then it follows by Theorem \ref{phiUpperBound} that $\widetilde\varphi(r)\leq 6$. So assume that $3, 5\nmid r$. Now, since $4\mid r$, Theorem \ref{closedFormula} yields 
	\begin{align*}
	    r\mid \B r1\ind 16 = \binom{r+4}5.
	\end{align*}
	Using Lemmas \ref{from1to4}, \ref{from1to5}, and \ref{from1to6} and noting that since $4 \mid r$ we have
	\begin{align*}
	    \tfrac{11}{20}r+\tfrac38r^2-\tfrac18r^3+\tfrac18r^4+\tfrac3{40}r^5\equiv \pm\frac r4\pmod r
	\end{align*}
	 we get by Lemma \ref{from1to6} that 
\begin{align*}
    \BBB r{(1, 1, -1, 1,1, 1)}-I &\stackrel1\cong  \begin{bmatrix}
			     0&r&\frac{r}{2} &0&x_1 \frac r4&\pm \frac r4\\
			     0&0&r&\frac{r(r+1)}{2}&x_2 r&x_3\frac r4\\
			     0&0&0&r&\frac{r(r+1)}{2}&0\\
			     0&0&0&0&r&\frac{r}{2}\\
			     0&0&0&0&0&r\\
			     0&0&0&0&0&0
		     \end{bmatrix}\\
		     &\stackrel{2}{\cong} \begin{bmatrix}
			     0&r&\frac{r}{2} &0&x_1 \frac r4&\frac r4\\
			     0&0&r&\frac{r}{2}&x_2 r&x_3\frac r4\\
			     0&0&0&r&\frac{r}{2}&0\\
			     0&0&0&0&r&\frac{r}{2}\\
			     0&0&0&0&0&r\\
			     0&0&0&0&0&0
		     \end{bmatrix}\\
		     &\stackrel{3}{\cong} \begin{bmatrix}
			     0&r&\frac{r}{2} &0&\frac r4&\frac r4\\
			     0&0&r&\frac{r}{2}&0&\frac r4\\
			     0&0&0&r&\frac{r}{2}&0\\
			     0&0&0&0&r&\frac{r}{2}\\
			     0&0&0&0&0&r\\
			     0&0&0&0&0&0
		     \end{bmatrix}.
\end{align*}
and
\begin{align*}
    \BBB r{(1, 1, 1, 1,1, 1)}-I &\stackrel1\cong  \begin{bmatrix}
			     0&r&\frac{r}{2} &0&y_1 \frac r4&0\\
			     0&0&r&\frac{r(r+1)}{2}&y_2 r&y_3\frac r4\\
			     0&0&0&r&\frac{r(r+1)}{2}&0\\
			     0&0&0&0&r&\frac{r}{2}\\
			     0&0&0&0&0&r\\
			     0&0&0&0&0&0
		     \end{bmatrix}\\
		     &\stackrel{2}{\cong} \begin{bmatrix}
			     0&r&\frac{r}{2} &0&y_1 \frac r4&0\\
			     0&0&r&\frac{r}{2}&y_2 r&y_3\frac r4\\
			     0&0&0&r&\frac{r}{2}&0\\
			     0&0&0&0&r&\frac{r}{2}\\
			     0&0&0&0&0&r\\
			     0&0&0&0&0&0
		     \end{bmatrix}\\
		     &\stackrel{3}{\cong} \begin{bmatrix}
			     0&r&\frac{r}{2} &0&\frac r4&0\\
			     0&0&r&\frac{r}{2}&0&\frac r4\\
			     0&0&0&r&\frac{r}{2}&0\\
			     0&0&0&0&r&\frac{r}{2}\\
			     0&0&0&0&0&r\\
			     0&0&0&0&0&0
		     \end{bmatrix}.
\end{align*}
for odd $x_1, x_2, x_3, y_1, y_2, y_3$ by the following steps. Step 1 reduces the first row and last column modulo $r$ by subtracting the second column and fifth row repeatedly from the other columns and rows. Step 2 subtracts the third column (fourth row) $\frac{r}2$ times from the fourth column (third row) and adds the second column (fifth row) $\frac r4$ times to the fourth column (third row). Step 3 reduces the entries $\ind 15$, $\ind 25$, and $\ind 26$ modulo $\frac r2$ by subtracting the fourth column and third row repeatedly from the fifth and sixth column and first and second row repeatedly. Note that the changes to entries $\ind 41$, $\ind 42$, $\ind 53$, and $\ind 63$ can be inverted by adding the second and third column to the fourth column and by adding the fourth and fifth row to the third row.

Now, dividing every entry by $\frac r4$, it follows that we have $\B r1\sim \mathsf{B}_{(r;(1,1,-1, 1,1,1)}$ if and only if
\begin{align*}
    \begin{bmatrix}
			     0&4&2 &0&1&\pm1\\
			     0&0&4&2&0&1\\
			     0&0&0&4&2&0\\
			     0&0&0&0&4&2\\
			     0&0&0&0&0&4\\
			     0&0&0&0&0&0
		     \end{bmatrix} &\cong \begin{bmatrix}
			     0&4&2 &0& 1&0\\
			     0&0&4&2&0&1\\
			     0&0&0&4&2&0\\
			     0&0&0&0&4&2\\
			     0&0&0&0&0&4\\
			     0&0&0&0&0&0
		     \end{bmatrix}.
\end{align*}
However, this can be checked to not be the case simply by solving the system of linear equations induced by Definition \ref{maindefinition} and finding that there are no solutions. Our conclusion follows. 
\end{proof}

\section{Concluding remarks}
Combining the results of the previous sections, we arrive at our main result, which answers the question of for which parameters $n$ and $r$ there only is a single, unique quantum lens space.
\begin{theorem}
\label{theorem_5_1}
Let $r>2$ and let $p$ be the smallest odd prime dividing $r$. Then
\begin{align*}
    \widetilde\varphi(r) = \begin{cases}
                           p+1, & 4\nmid r\\
                           \min\{6, p+1\}, & 4\mid r.
                       \end{cases}
\end{align*}
\end{theorem}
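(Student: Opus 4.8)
The plan is to prove the identity by a clean case split on the divisibility of $r$ by $4$, in each case simply reading off the value of $\widetilde\varphi(r)$ from the appropriate result established in Sections 3 and 4; there is no new computation to carry out here, only the bookkeeping of which earlier theorem applies in which regime.

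First I would dispose of the case $4\nmid r$. Here Theorem \ref{varphiisp1} applies verbatim and gives $\widetilde\varphi(r)=p+1$, which is exactly the first branch of the claimed formula.

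Next I would treat $4\mid r$, splitting once more according to whether $3\mid r$. If $3\nmid r$, then the smallest odd prime $p$ dividing $r$ satisfies $p\geq 5$, so $p+1\geq 6$ and hence $\min\{6,p+1\}=6$; the equality clause of Theorem \ref{varphiequals6} (which applies precisely because $3\nmid r$) then yields $\widetilde\varphi(r)=6$, matching the formula. If instead $3\mid r$, then $p=3$, so $\min\{6,p+1\}=\min\{6,4\}=4$, and I would pin down the exact value by two inequalities: Theorem \ref{phiUpperBound} gives the upper bound $\widetilde\varphi(r)\leq p+1=4$, while Corollary \ref{3divides} gives the matching lower bound $\widetilde\varphi(r)\geq 4$, so $\widetilde\varphi(r)=4$ as required.

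The only subtlety — and the one point I would flag rather than a genuine obstacle — is that in the sub-case $4\mid r$ with $3\mid r$ one cannot simply quote Theorem \ref{varphiequals6}, since that result only asserts $\widetilde\varphi(r)\leq 6$ (with strict inequality when $3\mid r$) and does not by itself identify the value. The exact value $4$ must instead be extracted from Theorem \ref{phiUpperBound} together with the general lower bound of Corollary \ref{3divides}. Everything else is immediate from the case analysis, so no hard step remains at this final stage.
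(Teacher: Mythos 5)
Your proposal is correct and follows essentially the same route as the paper: the case $4\nmid r$ is read off from Theorem \ref{varphiisp1}, and for $4\mid r$ the paper likewise combines Corollary \ref{3divides} with Theorem \ref{phiUpperBound} when $p=3$ and invokes Theorem \ref{varphiequals6} when $p\neq 3$. The subtlety you flag — that Theorem \ref{varphiequals6} alone cannot settle the $3\mid r$ sub-case — is handled identically in the paper's proof, so your reading matches the intended (if slightly awkwardly phrased) statement of that theorem.
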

\begin{proof}
    For $4\nmid r$ this follows directly from Theorem \ref{varphiisp1}. Thus, let $4\mid r$. By Corollary \ref{3divides}, $\tilde\varphi(r)\geq 4$, and it follows that if $p=3$ we have $\tilde\varphi(r)=4$ by Theorem \ref{phiUpperBound} and if $p\neq 3$ we have $\tilde\varphi(r)=6$ by Theorem \ref{varphiequals6}.
\end{proof}

We recall that
$\widetilde\varphi(r)$ is the minimum $n$ for which there is an $m$ such that $C(L_q(r,\overline{1})\otimes K\not \simeq C(L_q(r, \overline{m})\otimes K$ so that our result explains exactly how to find the smallest dimension where the $m$-vector influences the stable isomorphism class of the quantum lens space for any fixed $r$. In fact, using Proposition 14.5 in \cite{seerapwsny:gccfg} we get that  
$\widetilde\varphi(r)$ is the minimum $n$ for which there is an $m$ such that $ C(L_q(r,\overline{1})\not \simeq C(L_q(r,\overline{m})$.

Further, for the case when the quantum lens space is not uniquely given, we studied the number of equivalence classes arising by varying the parameter $\overline m\in (Z_r)^n$. A lower bound on the number of such equivalence classes, Theorem \ref{necessary}, was found by giving a necessary condition for two quantum lens spaces to be isomorphic, Theorem \ref{necessary}. However, computer experiments suggest that this necessary condition is in fact even sufficient when $4\nmid r$.  We thus conjecture the following which we have confirmed by computer experiments for $r\in\{3, 5, 6, 9\}$ and $n\leq 8$ and for $r\in \{10, 15, 21\}$ and $n\leq 7$.

\begin{conjecture} \label{necessary_conjecture}
Let $r=2^t\cdot p_1^{\alpha_1}\cdots p_k^{\alpha_k}, t\in \{0, 1\}$. Further, let $\overline m, \overline m' \in (Z_r)^n$ be given. Then $\B{r}{m}\sim \Bm{r}{m}$ if and only if for every $1\leq i\leq k$ and $1\leq t\leq n-p_i$ we have
$$
\prod_{l=t+1}^{t+p_i-1}m_l \equiv \prod_{l=t+1}^{t+p_i-1}m'_l \pmod{p_i}.
$$
\end{conjecture}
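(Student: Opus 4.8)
The plan is to prove the final statement, Conjecture \ref{necessary_conjecture}, by establishing that the necessary condition from Theorem \ref{necessary} is also sufficient when $4\nmid r$. Since the forward implication is exactly Theorem \ref{necessary}, all the work lies in the converse: assuming the product-congruences hold for every prime $p_i$ and every window of length $p_i-1$, I must exhibit the pivots transforming $\B rm-I$ into $\Bm rm-I$. The natural strategy is to reduce to a canonical representative within each $\sim$-class and show the congruences determine that representative completely.

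First I would use the Chinese Remainder Theorem to decouple the primes: since $r=2^t p_1^{\alpha_1}\cdots p_k^{\alpha_k}$ with $t\in\{0,1\}$, Lemma \ref{eqMatrices} already tells us that the odd part $s=p_1^{\alpha_1}\cdots p_k^{\alpha_k}$ controls $\sim$-equivalence up to the prescribed diagonals, so it suffices to work one prime $p_i$ at a time and to understand $\B rm-I$ modulo $p_i^{\alpha_i}$. By Lemma \ref{divides}, every off-diagonal entry $\B rm\ind ab$ with $0<b-a<p_i$ is divisible by $p_i^{\alpha_i}$, so those entries vanish mod $p_i^{\alpha_i}$ and can be cleared by row/column operations exactly as in the proof of Lemma \ref{eqMatrices}. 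The first genuinely informative entries are the $\ind a{a+p_i}$ entries, and Theorem \ref{main} gives
\begin{equation*}
\B rm\ind a{a+p_i}\equiv \binom{r+p_i-1}{p_i}\prod_{l=a+1}^{a+p_i-1}m_l^{-1}\pmod{p_i^{\alpha_i}},
\end{equation*}
where $p_i^{\alpha_i-1}\mid\mid\binom{r+p_i-1}{p_i}$. The hypothesised congruence on the windowed products is precisely what forces these entries of $\B rm-I$ and $\Bm rm-I$ to agree modulo $p_i^{\alpha_i}$.

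The heart of the argument would then be an induction on the matrix size $n$ (or equivalently on the band index) showing that, after clearing the lower bands, the matrices can be brought by admissible pivots to a normal form depending only on the data $\left(\prod_{l=t+1}^{t+p_i-1}m_l \bmod p_i\right)_t$ together with the fixed binomial diagonals guaranteed by Corollary \ref{smallcase} and Theorem \ref{closedFormula}. Concretely I would argue that, once the first $p_i-1$ off-diagonals are standardised, every higher entry $\ind a b$ with $b-a>p_i$ is either forced modulo $p_i^{\alpha_i}$ by the lower-band entries via the same Faulhaber/Lemma \ref{reduceSums} machinery used in Theorem \ref{main}, or is a free parameter that can be pivoted to zero using the nonzero $p_i^{\alpha_i-1}$-valued band entries as pivots. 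The delicate point is that pivots are one-directional (rows only added upward, columns only added rightward, per Definition \ref{maindefinition}), so I must check that the available pivot elements sit in the correct triangular position to eliminate each target entry without reintroducing discrepancies in already-standardised bands.

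The main obstacle I anticipate is exactly this interaction between the triangular pivot restriction and the $4\mid r$ exclusion. The even case is genuinely different: Theorem \ref{varphiequals6} and Lemmas \ref{from1to4}--\ref{from1to6} show that when $4\mid r$ the entries carry extra $\frac r4$-valued information not captured by the product invariant, which is why the conjecture is stated only for $t\in\{0,1\}$; so I must verify that when $4\nmid r$ the factor $2$ contributes nothing beyond the diagonals, letting Lemma \ref{eqMatrices} absorb it cleanly. The subtler difficulty is proving that no invariant finer than $T$ survives — that the windowed products are a \emph{complete} set of invariants and not merely necessary. I expect this completeness to require a careful dimension count matching the number of reachable normal forms against $\prod_i (p_i-1)^{n-p_i}$, the codomain size of the map $T$ from Theorem \ref{classesInequality}, thereby upgrading the lower bound of Theorem \ref{classesInequality} to an equality and closing the gap.
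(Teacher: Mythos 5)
The statement you set out to prove is not proven in the paper at all: it is stated as Conjecture \ref{necessary_conjecture}, supported only by computer experiments for small $r$ and $n$, and the authors explicitly remark that proving it seems out of reach of the paper's methods, ``since determining equivalence of matrices once they become sufficiently large is a complex task unless one can find better invariants to rely on.'' So there is no paper proof to compare against, and your proposal does not close the gap either: it is a plan whose central step is asserted rather than established. The forward implication is indeed Theorem \ref{necessary}, but everything you say about the converse --- that after standardising the first bands, every entry $\ind ab$ with $b-a>p_i$ ``is either forced modulo $p_i^{\alpha_i}$ by the lower-band entries or is a free parameter that can be pivoted to zero'' --- is precisely the open problem, and you give no argument for it beyond naming the machinery you hope would carry it out.

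Two of your intermediate claims are also concretely wrong, in ways that matter. First, Lemma \ref{eqMatrices} cannot be used to ``decouple the primes'' or to absorb the even part: that lemma requires the odd part $s$ to divide \emph{every} off-diagonal entry of $A-I$ and $B-I$, which fails as soon as $n\geq p_i+1$, since by Theorem \ref{main} the entry $\B rm\ind 1{p_i+1}\equiv\binom{r+p_i-1}{p_i}\prod_{l=2}^{p_i}m_l^{-1}\pmod{p_i^{\alpha_i}}$ is divisible only by $p_i^{\alpha_i-1}$, not by $p_i^{\alpha_i}$; yet $n\geq p_i+1$ is exactly the regime in which the conjecture has content. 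Second, the congruence formula of Theorem \ref{main} is proved only under the hypothesis that $p^k$ divides all shorter entries $\B rs\ind 1a$, $a<n$, for every $\overline s$ --- a hypothesis that, by that same computation, breaks once one passes the band $b-a=p_i$. Consequently the Faulhaber/Lemma \ref{reduceSums} machinery gives you no formula, and no divisibility control, for any entry with $b-a>p_i$, which is where all the difficulty lives. Your proposed ``dimension count'' matching reachable normal forms against $\prod_i(p_i-1)^{n-p_i}$ presupposes the completeness of the invariant $T$, which is the very statement to be proven; upgrading the lower bound of Theorem \ref{classesInequality} to an equality would require a genuinely new invariant or normal-form theorem for the higher bands under the one-directional pivot restriction of Definition \ref{maindefinition}, and neither you nor the paper supplies one.
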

\noindent
This conjecture is true if and only if we have equality in Theorem \ref{classesInequality} when $4\nmid r$, so an equivalent conjecture is the following. 
\begin{conjecture}\label{classes_conjecture}
Let $r>2$ have the prime factorization $r=2^t\cdot p_1^{\alpha_1}\cdots p_k^{\alpha_k}, t\in \{0, 1\}$. Then
\begin{equation*}
  \varphi_r(n) = \prod_{i=1}^k \ceil{(p_i-1)^{n-p_i}}.
\end{equation*}
\end{conjecture}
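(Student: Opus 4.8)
The plan is to match the lower bound already established with an upper bound of the same shape. Theorem \ref{classesInequality} produces $\varphi_r(n) \ge \prod_{i=1}^k \ceil{(p_i-1)^{n-p_i}}$ through the surjective invariant $T=(T_1,\dots,T_k)$, whose codomain has exactly $\prod_{i=1}^k \ceil{(p_i-1)^{n-p_i}}$ elements; and $T$ is constant on $\sim$-classes. Hence equality holds if and only if $T$ \emph{separates} the classes, i.e.\ if the necessary condition of Theorem \ref{necessary} is also sufficient. This is precisely Conjecture \ref{necessary_conjecture}, so the entire statement reduces to proving that, for $4\nmid r$, the equality $T(\overline m)=T(\overline m')$ forces $\B rm \sim \Bm rm$.

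To prove this sufficiency I would, for each value $\tau$ in the image of $T$, single out one canonical matrix $C(\tau)$ and show $\B rm - I \cong C(T(\overline m)) - I$ by upper-triangular integer row and column operations. I would build $C(\tau)$ to carry the forced low-gap entries ($r$ on gap $1$, $\binom{r+1}{2}$ on gap $2$, etc., by Corollary \ref{smallcase} and Theorem \ref{closedFormula}), a single prescribed entry on each gap-$p_i$ superdiagonal recording the matching component of $\tau$, and zeros elsewhere. Because the reduction factors through $C(T(\overline m))$, which depends on $\overline m$ only via $T(\overline m)$, transitivity of $\cong$ then delivers $\B rm \sim \Bm rm$ for any two $\overline m,\overline m'$ in a common fibre of $T$.

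I would carry out the reduction to $C(\tau)$ by sweeping the superdiagonals from the innermost outward, exactly as in Lemma \ref{eqMatrices} and the proof of Theorem \ref{varphiequals6}, but handling all odd primes simultaneously. The engine is Lemma \ref{divides}: for each odd prime $p_i$ with $p_i^{\alpha_i}\mid\mid r$, every entry $\B rm\ind ab$ with $0<b-a<p_i$ is divisible by $p_i^{\alpha_i}$, while by Theorem \ref{main} (shifted from $\ind 1n$ to $\ind ab$) the gap-$p_i$ entry is congruent to $\binom{r+p_i-1}{p_i}\prod_{l=a+1}^{a+p_i-1}m_l^{-1}$ modulo $p_i^{\alpha_i}$, a nonzero multiple of $p_i^{\alpha_i-1}$ whose class modulo $p_i$ is exactly the recorded invariant. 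One first uses the gap-$1$ entries (all equal to $r$) to reduce every off-diagonal entry modulo $r$, and then uses the higher forced superdiagonals to clear, one gap at a time, everything not pinned down by $\tau$. The hypothesis $t\in\{0,1\}$ enters here as in Lemma \ref{eqMatrices}: the prime $2$ contributes no invariant because every unit of $Z_r$ is odd, so its part can be normalised away like the tame odd factors, which fails once $4\mid r$.

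The hard part is controlling the entries on superdiagonals of gap strictly larger than $p_i$ modulo $p_i^{\alpha_i}$: Theorem \ref{main} only reaches gap $p_i$, since its hypothesis that all shorter entries are $p_i^{\alpha_i}$-divisible breaks down beyond that point, and closed forms for $\B rm\ind ab$ at large gaps are unavailable. The content of the conjecture is exactly that these large-gap entries carry no new $\sim$-invariant, so I would need a strengthening of Lemma \ref{eqMatrices} stating that once the gap-$p_i$ data are fixed modulo $p_i^{\alpha_i}$ for every $i$, any two matrices of quantum-lens-space shape are $\cong$. I expect to obtain this by an induction on $n$ driven by the recursive path-splitting $\B rm\ind 1n=\sum_q S(q)L(q)$ of Lemmas \ref{from1to5} and \ref{from1to6}, using Lemma \ref{yessim} to propagate the inductive hypothesis to principal submatrices. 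A secondary obstacle is that $\cong$ is an equivalence over $\Z$, so the prime-by-prime control must ultimately be reassembled by a Chinese-Remainder argument applied to the row and column operations themselves rather than merely to residues.
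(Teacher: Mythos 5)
There is a genuine gap, and it is worth being precise about where: the statement you are proving is an open conjecture in the paper. The paper proves only the lower bound (Theorem \ref{classesInequality}) and observes, exactly as your first paragraph does, that equality is equivalent to the sufficiency of the necessary condition of Theorem \ref{necessary}, i.e.\ to Conjecture \ref{necessary_conjecture}; it then explicitly states that proving this ``seems hard to achieve using the methods of this paper'' and offers only computer verification for small $r$ and $n$. So your first paragraph correctly reproduces the paper's reduction, but everything after it is a plan rather than a proof, and the plan defers precisely the crux. Your proposed strengthening of Lemma \ref{eqMatrices} --- that once the gap-$p_i$ superdiagonal data are fixed modulo $p_i^{\alpha_i}$ for every $i$, any two matrices of quantum-lens-space shape are $\cong$ --- \emph{is} the conjecture, restated in matrix language; asserting that you ``expect to obtain this by an induction'' does not discharge it.

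The specific mechanisms you name for that induction would not work as described. First, Theorem \ref{main} genuinely stops at gap $p_i$ (you acknowledge this), so the entries at larger gaps are not controlled modulo $p_i^{\alpha_i}$ by anything in the paper, and the path-splitting $\sum_q S(q)L(q)$ of Lemmas \ref{from1to5} and \ref{from1to6} is an entry-counting device, not a source of the row and column operations that $\cong$ requires; note that even the single case handled in Theorem \ref{varphiequals6} needed a bespoke closed form (Lemma \ref{from1to6}) and ended in an explicit linear-algebra check. Second, Lemma \ref{yessim} runs in the wrong direction for your purposes: it says $A\sim B$ implies $A[b,b+c]\sim B[b,b+c]$, which is how the paper proves \emph{non}-equivalence; you cannot use it to ``propagate the inductive hypothesis,'' because equivalences of overlapping principal submatrices come with independent operation sequences that need not be compatible, and gluing them is not addressed. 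Third, the Chinese-Remainder step on the operations themselves is unjustified: $\cong$ demands a single pair of integer unipotent matrices $U,V$ with the exact equality $U(A-I)=(B-I)V$ over $\Z$, and solvability of the corresponding congruences modulo each prime power does not obviously assemble into an integral solution (the unipotence and integrality constraints interact globally). Until these three points are actually carried out, the proposal establishes nothing beyond the paper's Theorem \ref{classesInequality}.
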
 
Proving these conjectures seems hard to achieve using the methods of this paper, however, since determining equivalence of matrices once they become sufficiently large is a complex task unless one can find better invariants to rely on. Worth noting is that proving Conjectures \ref{necessary_conjecture} and \ref{classes_conjecture} would yield the following satisfactory result, which resounds well with the overall findings of this paper. 
\begin{conjecture}
The equivalence classes of $S_{r, n}/\sim$ all have the same number of members.
\end{conjecture}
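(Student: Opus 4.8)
The plan is to derive this statement from the stronger Conjectures \ref{necessary_conjecture} and \ref{classes_conjecture}, as the authors indicate, so throughout I work in the regime $4\nmid r$ where that framework applies and I assume Conjecture \ref{necessary_conjecture}; the content is then to show that the $\sim$-classes, now identified with the fibres of the invariant $T$ from Theorem \ref{classesInequality}, contain equally many matrices. The starting observation is structural: equipping $(Z_r)^n$ with coordinatewise multiplication makes it an abelian group, and the map $T\colon (Z_r)^n\to (Z_{p_1})^{n-p_1}\times\dots\times (Z_{p_k})^{n-p_k}$ of Theorem \ref{classesInequality} is a surjective group homomorphism, since each of its coordinates is a product of entries reduced modulo $p_i$. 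Hence the fibres $T^{-1}(\tau)$ are exactly the cosets of $\ker T$ and all have the same cardinality $|Z_r|^n/\prod_i\ceil{(p_i-1)^{n-p_i}}$.

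This already balances the vectors, but the members of a class are \emph{matrices}, so writing $\Phi(\overline m)=\B{r}{m}$ for the matrix attached to a vector, I must instead control $|C_\tau|=|\Phi(T^{-1}(\tau))|$, the number of \emph{distinct} matrices lying over a fibre. To compare different fibres I would introduce, for $2\le j\le n-1$ and $c\in Z_r$, the map $\mu_{j,c}$ on $(Z_r)^n$ that multiplies the $j$th coordinate by $c$. A direct computation from the definition of $T$ gives $T(\mu_{j,c}\overline m)=\delta_{j,c}\cdot T(\overline m)$ for a fixed element $\delta_{j,c}$ independent of $\overline m$ (it multiplies, in each block $i$, the window of indices $t$ whose product sees position $j$ by $[c]_{p_i}$). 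Thus $\mu_{j,c}$ translates $T$-values and, under Conjecture \ref{necessary_conjecture}, permutes the $\sim$-classes. Because $T$ ignores the first and last coordinate and is surjective, the $\delta_{j,c}=T(\mu_{j,c}\overline 1)$ generate the whole translation group of the codomain of $T$, so the family $\{\mu_{j,c}\}$ acts \emph{transitively} on $S_{r,n}/\sim$.

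It then remains to upgrade ``permutes the classes'' to ``matches the classes bijectively as sets of matrices.'' For this I would show that each $\mu_{j,c}$ descends to a well-defined bijection of $S_{r,n}$, that is, $\Phi(\overline m)=\Phi(\overline m')$ implies $\Phi(\mu_{j,c}\overline m)=\Phi(\mu_{j,c}\overline m')$, and conversely. Combined with the gauge invariances already available, namely freedom in $m_1,m_n$ (Lemma \ref{invariantFirstLast}) and global scaling (Lemma \ref{multiply}), which one checks fix $T$ and have orbits of the constant size $|Z_r|^3$ for $n\ge4$, this makes the fibres of $\Phi$ a union of orbits that $\mu_{j,c}$ permutes, yielding a cardinality-preserving bijection $C_\tau\to C_{\delta_{j,c}\tau}$. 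Transitivity would then force all $|C_\tau|$ equal, which is the claim.

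The main obstacle is twofold and lies entirely in these two inputs. First, Conjecture \ref{necessary_conjecture}, the sufficiency of the invariant $T$, is itself open and resists the present mod-$p$ techniques. Second, and I expect this to be the genuine bottleneck, the descent in the previous paragraph rests on a precise understanding of \emph{when two vectors produce the same matrix}: one must prove that the coincidence relation $\Phi(\overline m)=\Phi(\overline m')$ is exactly the gauge relation, or at least is stable under multiplying a single interior coordinate by a unit. This is a statement about the exact integer values of the path counts $\B{r}{m}\ind ij$, and the congruence information provided by Theorem \ref{main} is visibly too coarse to detect it; establishing it appears to require a genuinely new, value-level invariant rather than the reductions modulo $r$ used throughout the paper, with the case $4\mid r$ presumably needing separate treatment in light of Theorem \ref{varphiequals6}.
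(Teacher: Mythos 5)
This statement is the paper's final conjecture; the paper offers no proof of it, only the remark that it would follow from Conjectures \ref{necessary_conjecture} and \ref{classes_conjecture}. So no blind attempt could have produced a complete proof, and yours rightly does not claim to be one; the question is whether your conditional derivation is sound. At the vector level it is, and it makes precise what the authors presumably had in mind: the map $T$ of Theorem \ref{classesInequality} is a homomorphism of abelian groups when $(Z_r)^n$ carries coordinatewise multiplication (each coordinate of $T$ is a product of entries reduced modulo $p_i$; note also that global scaling by $b$ multiplies each window product by $b^{p_i-1}\equiv 1 \pmod{p_i}$, confirming your claim that the gauge moves of Lemmas \ref{invariantFirstLast} and \ref{multiply} fix $T$), it is surjective by the paper, so its fibres are cosets of $\ker T$ and are equinumerous; under Conjecture \ref{necessary_conjecture} these fibres are exactly the pullbacks of the $\sim$-classes. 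Your transitivity argument via the translations $\delta_{j,c}=T(\mu_{j,c}\overline 1)$ is also correct, since every element of the codomain is a product of such $\delta$'s because $T$ ignores the first and last coordinates.

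The gap you flag is, however, genuine, and it is worth stressing that it afflicts the paper's own remark just as much as your proposal. The members of a class in $S_{r,n}/\sim$ are distinct \emph{matrices} $\B{r}{m}$, not vectors $\overline m$, so equal numbers of vectors per fibre of $T$ yield equal class sizes only if the multiplicity of the map $\overline m\mapsto \B{r}{m}$ is uniform across fibres --- for instance if two vectors give the same matrix exactly when they differ by the gauge relation, whose orbits have constant size $\abs{Z_r}^3$ for $n\geq 4$ as you note. Nothing in the paper controls this: all of its quantitative results are congruences modulo powers of primes dividing $r$, which cannot detect equality of integer matrices, and your maps $\mu_{j,c}$ are not known to descend from vectors to matrices. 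Consequently, even granting Conjectures \ref{necessary_conjecture} and \ref{classes_conjecture}, the literal statement does not follow (and for $4\mid r$ those conjectures say nothing at all, another point you correctly isolate). In short: your reduction is as complete as the current state of knowledge allows, your intermediate claims all check out, and your identification of the vector-versus-matrix multiplicity issue is a sharper assessment of what is missing than the paper's own remark.
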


\paragraph{Acknowledgement} 
The authors wish to thank prof.\ Søren Eilers for his guidance throughout this entire process. From posing the problem to providing final edits, his help has been invaluable in presenting these results.

\bibliographystyle{siam}
\bibliography{XM-article.bib}

\begin{thebibliography}{1}

\bibitem{fasbgl:qlsnyto}
{\sc F.~Arici, S.~Brain, and G.~Landi}, {\em The {Gysin} sequence for quantum
  lens spaces}, J. Noncommut. Geom.9, 4 (2015), pp.~1077--1111.

\bibitem{tbws:qlsnyet}
{\sc T.~Brzezi{\'n}ski and W.~Szyma{\'n}ski}, {\em The {$C^*$}-algebras of
  quantum lens and weighted projective spaces}, \texttt{arXiv:\ 1603.04678v1},
  (2016).

\bibitem{jcwk:cctmc}
{\sc J.~Cuntz and W.~Krieger}, {\em A class of {$C^*$}-algebras and topological
  {Markov} chains}, Invent.\ Math., 56 (1980), pp.~251--268.

\bibitem{seerapwsny:gccfg}
{\sc S.~Eilers, G.~Restorff, E.~Ruiz, and A.~S{\o}rensen}, {\em The complete
  classification of unital graph {$C^*$}-algebras: Geometric and strong},
  \texttt{arXiv:\ 1611.07120v1},  (2016).

\bibitem{seerapws:gccfg}
\leavevmode\vrule height 2pt depth -1.6pt width 23pt, {\em Geometric
  classification of graph {$C^*$}-algebras over finite graphs}, \texttt{arXiv:\
  1604.05439v2},  (2016).

\bibitem{jhhws:qlsga}
{\sc J.~H. Hong and W.~Szyma{\'n}ski}, {\em Quantum lens spaces and graph
  algebras}, Pacifi J. Math., 211(2) (2003), pp.~249--263.

\bibitem{jacobi}
{\sc C.~G.~J. Jacobi}, {\em De usu legitimo formulae summatoriae
  maclaurinianae}, J. Reine Angew. Math., 12 (1834), pp.~263--272.

\bibitem{gr:cckasi}
{\sc G.~Restorff}, {\em Classification of {Cuntz-Krieger} algebras up to stable
  isomorphism}, J. Reine Angew. Math., 598 (2006), pp.~185--210.

\end{thebibliography}

\end{document}